\newcommand{\R}{\mathbb{R}}
\newcommand{\T}{\mathbb{T}}
\newcommand{\curveC}{\mathsf C}
\newcommand{\dif}{\,\mathrm{d}}
\DeclareMathOperator{\arcsinh}{arcsinh}
\newtheorem{lemma}{Lemma}[section]
\newtheorem{prop}[lemma]{Proposition}
\newtheorem{thm}[lemma]{Theorem}
\newtheorem{cor}[lemma]{Corollary}
\newtheorem{Rem}[lemma]{Remark}
\title{Dissipative Euler flows originating from \\
circular vortex filaments}
\author{Francisco Gancedo, Antonio Hidalgo-Torn\'e, Francisco Mengual}
\date{\today}
\begin{document}
	
\maketitle

\begin{abstract}
In this paper, we prove the first existence result of weak solutions to the 3D  Euler equation with initial vorticity concentrated in a circle and velocity field in $C([0,T],L^{2^-})$. The energy becomes finite and decreasing for positive times, with vorticity concentrated in a ring that thickens and moves in the direction of the symmetry axis. 
With our approach, there is no need to mollify the initial data or to rescale the time variable.
We overcome the singularity of the initial data by applying convex integration within the appropriate time-weighted space. 
\end{abstract}

\section{Introduction and main result} 

We consider the Cauchy problem for the Euler equation
\begin{subequations}\label{eq:IE}
\begin{align}
\partial_t v + \mathrm{div}(v\otimes v)+\nabla p
&=0,\\
\mathrm{div}v&=0,\\
v|_{t=0}&=v^\circ,
\end{align}
\end{subequations}
posed on $[0,T]\times\R^3$, where $v(t,x)$ is the velocity field and $p(t,x)$ is the scalar pressure. 
The initial velocity $v^\circ$ is given by the Biot-Savart law
\begin{subequations}\label{eq:circular}
\begin{equation}\label{eq:v0}
v^\circ=\nabla\times
(-\Delta)^{-1}\omega^\circ,
\end{equation}
in terms of the vorticity $\omega^\circ$, which we assume to be initially concentrated along a curve, known as a vortex filament.
In this work we focus on the special case
\begin{equation}\label{eq:omega0}
\omega^\circ
=\Gamma\delta_\curveC,
\end{equation}
\end{subequations}
where $\Gamma\neq 0$ is the vorticity strength, and $\curveC$ is an oriented circle with radius $L>0$. Due to the invariances of the Euler equation, we can assume without loss of generality that $\curveC$ is located in the plane $x_3=0$ and centered at the origin $(x_1,x_2)=(0,0)$. 
Our main result reads as follows:

\begin{thm}\label{thm:main} 
There exists $T>0$ such that  \eqref{eq:IE}\eqref{eq:circular} admits infinitely many weak solutions $v\in C([0,T],L^{2^-})$. For any $0<t\leq T$, the energy becomes finite and decreasing with
\begin{equation}\label{eq:thm:energy}
\frac{1}{2}\int_{\R^3}|v(t,x)|^2\dif x
= -\frac{L\Gamma^2}{2}\log c(t) +O(1),
\end{equation}
as $t\to 0$. Furthermore, $v(t)$ is analytic outside the ``turbulence ring''
\begin{equation}\label{eq:thm:tur}
\Omega_{\text{tur}}(t)
=\{x\in\R^3\,:\,\mathrm{dist}(x,h(t)e_3+\curveC)<c(t)\},
\end{equation}
where $c(t)$ and $h(t)$ denote the  thickness and height of the ring respectively
\begin{equation}\label{eq:thm:ch}
c(t)=\sqrt{\nu_{\text{tur}}t},
\quad\quad
h(t)=\frac{\Gamma}{8\pi L}(1-2\log c(t))t,
\end{equation}
and $\nu_{\text{tur}}>0$ represents the ``turbulence viscosity'',
a constant proportional to $|\Gamma|$.
\end{thm}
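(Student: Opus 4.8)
The plan is to produce the solutions by \emph{convex integration}, carried out around an explicit \emph{vortex ring profile} that already carries the singular part of the flow and run inside time‑weighted spaces tuned to the core thickness $c(t)=\sqrt{\nu_{\text{tur}}t}$; this is precisely what removes the need to mollify the datum or to rescale time. \textbf{Step 1 (the ring subsolution).} I would first build a smooth, axisymmetric, swirl‑free field $\bar v(t,x)$ on $(0,T]\times\R^3$ whose vorticity $\bar\omega(t)$ is a fixed bump profile of total flux $\Gamma$, concentrated in an $O(c(t))$ neighbourhood of the translated circle $h(t)e_3+\curveC$: it is obtained from the classical steady vortex ring by letting its core simultaneously translate at the self‑induced Kelvin speed — which, integrated together with the law $c(t)=\sqrt{\nu_{\text{tur}}t}$, is exactly $h(t)$ — and dilate according to $c(t)$. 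Since the Euler evolution does not dilate a vortex core, $\bar v$ only solves an Euler--Reynolds system $\partial_t\bar v+\mathrm{div}(\bar v\otimes\bar v)+\nabla\bar p=\mathrm{div}\bar R$, where $\bar R$ can be chosen \emph{supported inside} $\Omega_{\text{tur}}(t)$ and, in the weighted norms carrying the correct powers of $c(t)$, of eddy‑viscosity size $\bar R\sim\nu_{\text{tur}}\nabla\bar v$ (so that $\|\bar R(t)\|_{L^1}=O(1)$ as $t\to 0$, after fixing the self‑similar core profile). The parabolic law $c(t)=\sqrt{\nu_{\text{tur}}t}$ is the unique one for which the forced core‑growth contribution to $\bar R$ balances the effective viscosity generated by the turbulent perturbation of Step 2, and this compatibility condition fixes $\nu_{\text{tur}}$ as a positive constant proportional to $|\Gamma|$. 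Finally, a direct Biot--Savart computation gives the classical vortex‑ring asymptotics $\tfrac12\|\bar v(t)\|_{L^2}^2=-\tfrac{L\Gamma^2}{2}\log c(t)+O(1)$ and the convergence $\bar v(t)\to v^\circ$ in $L^{2^-}$ as $t\to 0$ (the core has volume $\sim L\,c(t)^2$, hence contributes $o(1)$ in every $L^p$, $p<2$, while $v^\circ\in L^p$ for $1<p<2$ because $|v^\circ|\sim\mathrm{dist}(\cdot,\curveC)^{-1}$ near the circle and $|v^\circ|\sim|x|^{-3}$ at infinity).

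\textbf{Step 2 (convex integration with time weights).} Starting from $(v_0,R_0)=(\bar v,\bar R)$ I would iterate a Nash‑type scheme $(v_q,R_q)\mapsto(v_{q+1},R_{q+1})$, adding a highly oscillatory perturbation $w_{q+1}=v_{q+1}-v_q$ built from (intermittent) Mikado‑type flows at spatial frequency $\lambda_q$ and amplitude $\sim\delta_{q+1}^{1/2}$. All inductive estimates are phrased in weighted spaces of the form $\sup_t c(t)^{\beta_q}\|\cdot\|_{C^k_x}$: near $t=0$ one has $\lambda_q^{-1}\ll c(t)$, so the weights encode the fact that the oscillatory corrections only "switch on" once the core is wide enough to host them. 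The scheme is arranged so that (i) each $w_{q+1}$ is supported in an $O(c(t))$ neighbourhood of $\Omega_{\text{tur}}(t)$, hence the iteration never perturbs $\bar v$ in the exterior; (ii) the kinetic energy of $v_{q+1}$ is steered towards a prescribed $C^1$, strictly decreasing profile $e(t)=-L\Gamma^2\log c(t)+O(1)$; (iii) $R_{q+1}$ is strictly smaller than $R_q$ in the weighted norm, which forces the usual relations between $\delta_q$, $\lambda_q$ and the weight exponents $\beta_q$ to close.

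\textbf{Step 3 (limit and verification).} The series $v=\bar v+\sum_{q\ge 1}w_q$ converges in $C((0,T],L^{2^-})$ to a weak solution of \eqref{eq:IE} on $(0,T]$. Since $\|w_q(t)\|_{L^{2^-}}\lesssim\|w_q(t)\|_{L^2}\,|\Omega_{\text{tur}}(t)|^{\frac1p-\frac12}$ with $p<2$ is summable in $q$ and tends to $0$ as $t\to 0$, and $\bar v(t)\to v^\circ$, the solution extends continuously to $t=0$ with $v(0)=v^\circ$, giving $v\in C([0,T],L^{2^-})$. Property (ii) yields $\tfrac12\|v(t)\|_{L^2}^2=\tfrac12 e(t)=-\tfrac{L\Gamma^2}{2}\log c(t)+O(1)$ together with its monotonicity. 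Outside $\Omega_{\text{tur}}(t)$ every $w_q$ vanishes, so there $v=\bar v=\nabla\times(-\Delta)^{-1}\bar\omega$ with $\bar\omega$ supported strictly inside the ring; as the Biot--Savart kernel is real‑analytic off the diagonal, $v(t)$ is real‑analytic on the complement of $\Omega_{\text{tur}}(t)$. The freedom in the choice of the profile $e(t)$ (or of the phases of the building blocks) then produces infinitely many such solutions.

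\textbf{Main obstacle.} The crux is Step 2: running a convergent convex integration iteration on the whole interval $(0,T]$ \emph{without} mollifying the datum or rescaling time, in the presence of data that degenerate as $t\to 0$. This hinges on the right design of the time‑weighted norms and the matched choice of $(\delta_q,\lambda_q,\beta_q)$ — in particular allowing the building‑block frequency to depend on $t$ through $c(t)$ — together with the self‑consistent determination of $\nu_{\text{tur}}$ from the balance between forced core growth and turbulent flux. A secondary, but essential, difficulty is keeping every perturbation exactly supported inside $\Omega_{\text{tur}}(t)$ throughout the scheme, which is what allows the exterior analyticity of $\bar v$ to survive in the limit.
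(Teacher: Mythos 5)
Your overall architecture — build an explicit ring subsolution with the prescribed core law $c(t)=\sqrt{\nu_{\text{tur}}t}$, run convex integration confined to $\Omega_{\text{tur}}$, and use exterior rigidity for analyticity — matches the paper. However, your Step 2 takes a genuinely different, and substantially harder, route for the convex integration, and your Step 1 contains a concrete gap.

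On Step 2: you propose a quantitative Nash-type iteration with intermittent Mikado blocks, an explicit $(\delta_q,\lambda_q,\beta_q)$ cascade, and time-weighted H\"older norms. The paper does not do this. It adapts the original De Lellis--Sz\'ekelyhidi Baire-category / h-principle framework: one forms a complete metric space $X$ of velocities in $C_{\log t}L^2_{\text{weak}}$ that agree with $\bar v$ outside $\Omega_{\text{tur}}$ and satisfy $e_{\text{sub}}\le e$, shows that the relaxed energy functional $J$ is upper semicontinuous and enjoys a local perturbation property on cylinders $I\times\Omega\subset\subset\Omega_{\text{tur}}$, and invokes Baire category. Because those cylinders stay away from $t=0$, the degeneracy of the data never enters the perturbation step; the only new ingredient is the weighted ambient space $C_{\log t}L^2$. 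This is exactly the obstacle you flag as ``the crux'' of your scheme — the paper \emph{sidesteps} it rather than solving it head-on. Your Nash-iteration plan would in principle yield better regularity, but it is not established and the matched cascade is not supplied; the paper explicitly leaves H\"older upgrades as an open problem.

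On Step 1 there is a gap: you assert that $\bar R$ ``can be chosen supported inside $\Omega_{\text{tur}}(t)$'', but this is not automatic. Inverting the divergence with a \emph{symmetric} right-hand side while confining the support to a ball requires compatibility conditions — the mean of the forcing and its first angular moment about the core center must vanish. The paper secures these through the Isett--Oh antidivergence operator, which forces the vorticity profile $\varpi$ to satisfy the moment condition $\int_0^1\varpi\rho^3\,d\rho=0$ and requires a pressure corrector $\bar q_1$ supported in the ring. A generic ``bump profile of total flux $\Gamma$'' does not satisfy this moment condition, and without it the antidivergence leaks outside the ring, destroying the exterior analyticity your Step 3 relies on. Relatedly, you omit the internal rotation phase $a(t,\rho)$ in the core parameterization: without it the swirl component of $\partial_t\gamma-\bar v$ scales like $1/c$, not like $\dot c$, so the Reynolds stress does not close at the claimed size and the balance fixing $\nu_{\text{tur}}$ is not the one you describe.
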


The proof of Theorem \ref{thm:main} relies on finding a suitable solution to the Euler-Reynolds equation, also known as a ``subsolution'', and later applying the convex integration method \cite{DeLellisSzekelyhidi10}. 
As we will explain in more detail in Sections \ref{sec:Heuristics} and \ref{sec:sketch}, we seek a subsolution whose vorticity concentrates around a filament evolving in time. Heuristically, this evolution should be driven by the binormal flow with a displacement velocity amplified by a factor $\log c$, where $c$ is the radius of the vortex core (see subsection \ref{subsec:overviewliterature} for more details). In the case of the horizontal circle, the binormal direction is the vector $e_3$. If the vorticity remained concentrated in the filament ($c=0$), the circle would ascend to infinity instantly, making it impossible to solve the Euler equation. However, given the singularity of the initial data, it is conceivable that turbulence may be triggered by infinitesimal perturbations, thus expanding the support of the vorticity, slowing down the ascent of the turbulence ring, and decreasing the energy during the process.
In this work we prove that, by choosing $c(t)>0$, it is possible to construct Euler flows satisfying the aforementioned properties. The proper scaling $c(t)\sim\sqrt{t}$ appears naturally as we aim to attain a Reynolds stress comparable to the nonlinear term. 
The constant $\nu_{\text{tur}}$ could be experiment-dependent (see \eqref{eq:thickness}).

Our construction is inspired by prior work \cite{Szekelyhidi11} on the Kelvin-Helmholtz instability for the 2D Euler equation. In this example, the vorticity is initially concentrated in the horizontal axis, that is, $\omega^\circ=\Gamma \delta_{\{x_2=0\}}$. For positive times, the author proved the existence of dissipative Euler flows by devising a subsolution whose vorticity is concentrated inside a turbulence strip $\Omega_{\text{tur}}(t)=\{x\in\R^2\,:\,|x_2|<\nu_{\text{tur}}t\}$. Remarkably, the presence of symmetries simplifies the derivation of the subsolution to a direct computation. Despite our initial data also enjoys certain symmetries, the computation of the subsolution becomes intricate due to various factors.

Firstly, in contrast to the previous works on modeling hydrodynamical instabilities through convex integration, this is the first one in three dimensions. In order to simplify the analysis, we restricted ourselves to the axisymmetric without swirl case. Although this reduces the problem to two dimensions, the corresponding Biot-Savart law involves integrals that cannot be explicitly computed, introducing a series of errors that need to be controlled. To address this, we draw upon previous work \cite{FengSverak15} on viscous vortex rings.

Secondly, our turbulence zone involves three dynamics: vertical displacement, width expansion, and internal rotation. The first example with two dynamics was provided in \cite{CCF21} for the Saffman-Taylor instability in porous media. In this work, the authors proved the existence of mixing solutions by concentrating the vorticity inside a mixing zone $\Omega_{\text{mix}}(t)=\{x\in\R^2\,:\,|x_2-f(t,x_1)|<\nu_{\text{tur}}t\}$ surrounding a dynamical interface $f$. The work  \cite{ForsterSzekelyhidi18} simplified the construction by introducing the idea of concentrating the vorticity on the boundary of the mixing zone. Later, this approach was implemented in \cite{MengualSzekelyhidi23} for the Kelvin-Helmholtz instability with dynamical vortex sheets.

Motivated by the last work \cite{MengualSzekelyhidi23}, our first attempt consisted of trying to concentrate the vorticity at the boundary of the turbulence ring. Unfortunately, this approach did not fit suitably to the requirements of our subsolution. Based on the previous work of the third author on power-law vortices for the 2D Euler equation \cite{Mengualpp}, we had hope in being able to concentrate the vorticity inside the turbulence ring. However, 
in contrast to \cite{Mengualpp}, the errors introduced by the Biot-Savart law prevents the explicit computation of the Reynolds stress. To overcome this difficulty, we made use of the antidivergence operator in \cite{IsettOh16}. This operator has the crucial property of maintaining the support of the turbulence zone, but at the cost of introducing several compatibility conditions. In this work, we ensure these conditions by choosing the vorticity density carefully and introducing a suitable corrector in the pressure.

Thirdly, our initial data is the first one satisfying $v^\circ\notin L_{\text{loc}}^2$ in the context of the Euler equation. Although the h-principle allows for the construction of solutions with finite energy \cite{DeLellisSzekelyhidi10}, it does not immediately extend to our case: the energy blows up as time approaches zero. To address this, we adapt the convex integration method from \cite{DeLellisSzekelyhidi10} for an appropriate time-weighted space.
Although the initial data cannot be reached in $L^2$, we prove strong convergence in all $L^p$ with $1<p<2$, in short, $L^{2^-}$.

\subsection{Open questions and future work}

Our approach seems flexible enough to be applied to more general vortex filament initial data. 
In fact, our goal in an ongoing project is to generalize Theorem \ref{thm:main} and relate it to the dynamics of the binormal flow. 

It is to be expected that the velocities constructed in Theorem \ref{thm:main} can be upgraded inside $\Omega_{\text{tur}}$ to be H\"older continuous below the Onsager critical exponent $\nicefrac{1}{3}$ (see e.g.~\cite{BDSV19}). 
In fact, there is hope for controlling the vorticity in $L^1$ in three dimensions (see e.g.~\cite{JMS21}). This would align with the case of vortex filament data.  
We leave these possible improvements of the regularity for future works.
Instead, our key object of study in the present paper has been the evolution of the turbulence zone and the energy dissipation.

Since the aforementioned convex integration constructions are purely three dimensional, we expect our solutions to break the symmetries of the initial data, namely, they would not be axisymmetric without swirl.
A natural and very interesting question
is whether any solution with the characteristics of Theorem \ref{thm:main} must necessarily break these symmetries.

\subsection{Overview of literature}\label{subsec:overviewliterature}

\subsubsection*{Vortex filaments}
A velocity field is said to be a vortex filament when its vorticity is concentrated in a curve. The study of vorticities concentrated near a curve is a classical problem dating back to the works of Helmhotz \cite{Helmholtz} and Kelvin \cite{Thomson1883}.
An intriguing problem is to determine how the vorticity evolves with time.  The challenge lies in the fact that the velocity over the curve is infinite, making it difficult to formulate the problem.

A common approach is to consider a vorticity tube around the curve and study the evolution of the Euler equation when the width $c$ of the tube vanishes. In \cite{darios1906}, Da Rios derived formally that the velocity over the filament should be, to higher order, $\log c$ in the binormal direction. Therefore, one can study the limit when the width vanishes if simultaneously the time is rescaled as $(\log c)^{-1}$. This leads to the question of whether, with this rescaling, there are Euler solutions whose vorticity is concentrated near a curve that evolves as the binormal flow. That is, the evolution of a curve $\chi(t,s)$, with $s$ the unit speed parametrization, given by the equation
$$
\partial_t \chi(t,s)=\partial_s \chi(t,s)\times \partial^2_s \chi(t,s).
$$
This is known in the literature as the vortex filament conjecture. Some sufficient conditions on Euler solutions to satisfy this conjecture are given in \cite{JerrardSeis2017}. This conjecture has been solved in some particular cases. Part of the difficulty is that the binormal flow itself is a challenging equation that exhibits complex behavior. See the works \cite{jerrardsmets2015,banicavega2022,banicaeceizabarrenanahmodvega2024} and the references therein. In \cite{daviladelpinomussowei2022} this conjecture was proved when the filament is a helix. 
See \cite{CaoWan2023structure}
for a result with helical symmetry on bounded domains. In the axisymmetric case, solutions concentrated close to a curve traveling in its binormal direction were constructed in \cite{fraenkel1970}. In \cite{BenedettoCagliotiMarchioro00}, this result is generalized to more general initial data. See \cite{CaoLaiQinZhan23} and the references therein for more results on the existence, uniqueness, and stability of vortex rings. In \cite{davilaetal2023leapfrogging}, the leapfrogging phenomenon in vortex rings has been rigorously studied. The case of column vortices can be reduced to two dimensions and is equivalent to the point-vortex model. See \cite{MarchioroPulvirenti83, GallaySmets2020} for results in this setting. See also \cite{HHM23} for a recent result on leapfrogging in this setting.

Another approach is to introduce a smoothing mechanism. Considering the Navier-Stokes equation, the global well posedness for small initial vorticity belonging to the Morrey space $\mathcal{M}^{3/2}$, which includes vortex filaments, was demonstrated in \cite{gigamiyakawa1989}. For large data, the local existence and uniqueness under some restrictions of solutions for an arbitrary smooth and closed curve have been demonstrated in \cite{bedrossiangermainharropgriffiths23}. The global existence of solutions without size restriction is known only under symmetry conditions. The translational symmetry case can be reduced to a 2D problem, where there is abundant literature on the subject. See \cite{giga1988} for an existence and \cite{gallaghergallaylions2005} for a uniqueness result. For axisymmetric fluids without swirl, the global existence of axisymmetric solutions was shown in \cite{FengSverak15}, and the uniqueness in the symmetry class in \cite{GallaySverak19}. For helical fluids, the global existence of solutions has been proved recently \cite{GH-T2023}. The drawback of considering the Navier-Stokes equation is that the vorticity support instantaneously becomes the whole space. However, it is expected that the vorticity remains concentrated mainly near a curve for times $t\ll \nu^{-1}$. Regarding this approach, when the initial vorticity is supported in a circle, it has been shown in \cite{GallaySverak19} that the vorticity of the unique axisymmetric solution remains close to a curve moving in its binormal direction. This result has been sharpened in \cite{gallaysverak2023} for vorticities sufficiently large compared to viscosity. For a small initial vorticity supported on a closed curve, it was proven recently in \cite{FontelosVega23} that there is a solution with the vorticity remaining close to the evolution of the curve in its binormal direction.

In this paper, we consider a new approach that allows weak solutions to the Euler equation with initial vorticity concentrated in a curve, without time rescaling or regularization of the initial data. As we have mentioned before, convex integration can help us to overcome the singularity of the initial data. 

This approach is natural if one takes into account that high-energy filaments are expected to be unstable (see e.g. \cite{WidnallSullivan73} and \cite[Section 5]{vandyke1982album}).
In this regard, it is worth noting that the nonuniqueness of Leray solutions to the forced Navier-Stokes equation has been proved recently for a specific unstable vortex ring \cite{ABC23}.

\subsubsection*{Convex integration}
In the seminal work \cite{DeLellisSzekelyhidi09}, non-trivial Euler flows are constructed by adapting the convex integration method, which traces back to works in differential geometry \cite{Nash54} and calculus of variations \cite{MullerSverak03}, in hydrodynamics. Given that the solutions in \cite{DeLellisSzekelyhidi09} satisfy $v^\circ=0$, their energy must necessarily increase. In their second paper \cite{DeLellisSzekelyhidi10}, upon which the present work is based, they initiated the investigation on ``admissible'' Euler flows, those characterized by non-increasing energy. After a decade of refinements to this method, it allowed to construct Euler velocities with H\"older regularity strictly less than $\nicefrac{1}{3}$ exhibiting nonuniqueness \cite{Isett18} and dissipating the kinetic energy \cite{BDSV19}. This milestone thereby resolves the dissipative part of the 3D Onsager conjecture. More recently, the $L^3$-based Onsager conjecture was proved in \cite{GNKpp}. The conservative part of the Onsager conjecture had been proven previously in \cite{Eyink94,CET94}. See also \cite{CCFS08} for critical regularity.
Remarkably, this method allowed for proving nonuniqueness of solutions to the Navier-Stokes equation \cite{BuckmasterVicol19} and sharpness of the Ladyzhenskaya-Prodi-Serrin criteria \cite{CheskidovLuo22}.
In the context of turbulent transport,
it was recently shown in \cite{BSWpp} anomalous dissipation
of scalar fields advected by typical Euler flows below the Onsager threshold. 

In the aforementioned results, the emphasis is placed on upgrading the global regularity of the solutions, rather than addressing the Cauchy problem for specific data. 
According to the weak-strong uniqueness principle, initial data exhibiting nonuniqueness of admissible solutions, known as ``wild'' data, cannot be smooth.  In \cite{DaneriSzekelyhidi21} it is proved that wild data are dense in the space of divergence-free vector fields. The underlying idea is that a small but rough perturbation can be added to any (smooth) data, triggering turbulence. Despite this genericity, determining whether a given initial data is wild, that is, constructing admissible solutions emanating from it, turns out to be quite challenging. Recently, the third author \cite{Mengualpp} proved the existence of wild data in two dimensions satisfying $\omega^\circ\in L^1\cap L^p$ for any given $p<\infty$.
This showcases both sharpness of the weak-strong uniqueness principle and Yudovich's proof of uniqueness \cite{YUDOVICH63}. The construction relies on devising a subsolution in the form of a self-similar vortex, with $\Omega_{\text{tur}}(t)=\{x\in\R^2\,:\,|x|\leq (\nu_{\text{tur}}t)^{\nicefrac{p}{2}}\}$ and $\nu_{\text{tur}}\sim |\Gamma|p^{-2}$. As we will in Section \ref{sec:sketch}, the cross section of our vortex ring shares similar properties (see \eqref{compcond2}) to that of \cite{Mengualpp} when $p\to 1$.

The difficulty in determining if a given data is wild arises from the fact that these are often associated with some form of hydrodynamical instability.
As we mentioned at the beginning of the introduction, \cite{Szekelyhidi11} provided the first example of wild data in the context of the Kelvin-Helmholtz instability  (see also \cite{MengualSzekelyhidi23}). This initiated a promising program of modeling hydrodynamical instabilities through convex integration. Apart from the Kelvin-Helmholtz instability, this approach has been successfully applied to the Rayleigh-Taylor instability \cite{GKS21,GebhardKolumban22a,GebhardKolumban22b,GHKpp}, and the Saffman-Taylor instability 
(see e.g.~\cite{Szekelyhidi12,CCF21,ForsterSzekelyhidi18,NoisetteSzekelyhidi21,CFG23} and the references therein). 

In these works, the fluid is initially smooth outside an interface, where the hydrodynamical instability is concentrated. The general approach consists of opening a turbulent zone around a dynamical interface. Outside the turbulent zone, the corresponding Reynolds stress must vanish, meaning it must be a solution of the original system. Inside the turbulent zone, whenever the h-principle applies, it is sufficient to construct a subsolution. Despite the degree of freedom introduced by the Reynolds stress, the physical properties we seek in our solutions restrict the evolution of the turbulent zone. As an illustrative example, \cite{CFM22} showed that the thickness of the mixing zone is controlled by the inequality $c(t,\alpha)\leq t(1-\sigma(t,\alpha))$, where $\sigma$ is the Saffman-Taylor function, which takes values in $[-1,1]$. This constraint prevents the fluids from mixing at asymptotically stable points ($\sigma=1$), thus forcing the mixing zone to be localized around the instability.

Although nonuniqueness appears inherent in these phenomena, there is hope for restoring determinism at the average level by selecting the  Reynolds stress (equivalently the subsolution) as a function of the average flow. 
This challenge is known in the literature as the ``closure problem''. 
For the Kelvin-Helmholtz instability, \cite{Szekelyhidi11} and \cite{MengualSzekelyhidi23} studied a selection criterion based on maximal energy dissipation. For the Rayleigh-Taylor instability, \cite{GHKpp}
 proposed a selection criterion based on the least action principle. Regarding the Saffman-Taylor instability,  \cite{Otto99} and \cite{Szekelyhidi12} obtained a selection criterion based on a Lagrangian and Eulerian relaxation approach, respectively (see also \cite{CFM19,Mengual22,HitruhinLindberg21}). For the flat interface, this approach yields a unique entropy solution as the candidate for the macroscopic solution.
Recently, in \cite{CFG23} these entropy solutions for dynamical interfaces have been constructed.

\subsubsection{General notation}
In this subsection we clarify some notation.
\begin{itemize}
    \item  Given two comparable quantities $a,b$, we denote $a\lesssim b$ if
there exists a constant $C>0$ such that $a\leq Cb$, and also $a\sim b$ if additionally $b\lesssim a$.
    \item We will use $x=(x_1,x_2,x_3)$ as cartesian coordinates, $(r,\theta,z)$ as cylindrical coordinates, and $(\rho,\alpha)$ as the polar coordinates of the cross section of the turbulence ring.
    \item In axisymmetric coordinates, we will identify
    $$
    \mathbb{H}:=\{(r,z)\in\R^2\,:\, r>0\},
    $$
    with the complex half-plane, and we will write compactly
    $$
    \zeta=r+iz.
    $$
    Moreover, we will denote $\zeta^*=r-iz$ and $\zeta^\perp=i\zeta=-z+ir$.
    \item We will denote $\Omega_{\text{tur}}(t)$ by the turbulence ring at time $t$, and $\Omega_{\text{tur}}=\{(t,x)\,:\,x\in\Omega_{\text{tur}}(t)\}$ by the space-time turbulence zone.
\end{itemize}

\subsection{Organization of the paper}

We start by providing some heuristics and outlining the proof of Theorem \ref{thm:main} in Sections \ref{sec:Heuristics} and \ref{sec:sketch} respectively.
In Section \ref{sec:Axisymmetric} we derive the Euler-Reynolds equation for axisymmetric without swirl subsolutions. In Section \ref{sec:subsolution} we construct the subsolution, while the energy is computed in Section \ref{sec:energy}. In the Appendix we include the axisymmetric without swirl Biot-Savart law and the antidivergence operator, respectively.

\section{Heuristics}\label{sec:Heuristics}

Theorem \ref{thm:main} gives the first example of weak solutions to the Euler equation with vortex filament data. For this reason, we would like to provide a heuristic explanation of its existence.

Let us consider the Cauchy problem for the 3D Navier-Stokes equation
\begin{subequations}\label{eq:NS}
\begin{align}
\partial_t v_\nu + \mathrm{div}(v_\nu\otimes v_\nu)+\nabla p_\nu
&=\nu\Delta v_\nu,\\
\mathrm{div}v_\nu&=0,\\
v_\nu|_{t=0}&=v^\circ,
\end{align}
\end{subequations}
where $\nu>0$ is the kinematic viscosity.

In \cite{GallaySverak19}, it is proved that, within the
axisymmetric without swirl class, there is a unique Navier-Stokes flow $v_\nu$ originating from the circular vortex filament \eqref{eq:circular}, which becomes smooth for positive times. 
This solution behaves like a vortex ring of thickness proportional to
$$
c_\nu(t)=\sqrt{\nu t},
\quad\quad
\nu=\frac{|\Gamma|}{\text{Re}},
$$
which is located, according to the Kelvin-Saffman formula, in the $x_3$-axis with height 
$$
h_\nu(t)=\frac{\Gamma}{8\pi L}(C-2\log c_\nu(t))t,
$$
where $\text{Re}>0$ is the (circulation) Reynolds number, and 
$C$ is a dimensionless constant that depends on the distribution of vorticity inside the
cross section of the ring. 
Observe that, as the Reynolds number increases, this vortex ring ascends more and more rapidly, while vorticity dissipation decreases. Formally, in the inviscid limit $\nu\to 0$, 
which corresponds to $\text{Re}\to\infty$ when $\Gamma$ is fixed,
one would observe the circle ascending towards to infinity instantaneously. For this reason, one might tend to think that there cannot exists Euler flows originating from the
circular vortex filament \eqref{eq:circular}, thus contradicting Theorem \ref{thm:main}. Our goal in this section is to explain why this is not necessarily the case.

At this point, we would like to caution the reader that, although it is tempting to draw parallels between the $c$ and $h$ in our Theorem \ref{thm:main} with $c_\nu$ and $h_\nu$ above, our scaling arises from the (local) self-similarity of the initial data and turbulent dissipation, rather than from viscous dissipation.

\subsection{Spontaneous stochasticity and anomalous dissipation}

As already mentioned in \cite{GallaySverak19}, a natural question is whether uniqueness remains true among all reasonable solutions that approach the initial vortex filament in a suitable sense.
Both from an experimental and numerical point of view, it is indeed more realistic to consider potential deviations of the initial data.
In this regard, we consider 
\begin{equation}\label{eq:omegaper}
v_\eta^\circ=\nabla\times
(-\Delta)^{-1}\omega^\circ_\eta,
\quad\quad
\omega^\circ_\eta
=\Gamma\delta_{(\curveC+\eta)},
\end{equation}
for some perturbations $\eta$ possibly breaking the symmetries of the circular filament \eqref{eq:circular}. 
If preferred, one could consider smoother data by mollifying \eqref{eq:omegaper} or thickening the filament $\curveC+\eta$.
As we aim to examine small perturbations, we consider a family $\varepsilon_\nu P$, for some fixed class of perturbations $P$, and some $\varepsilon_\nu>0$ that vanishes as $\nu\to 0$.

Whether what we observe later  resembles the ideal case $v_\nu$ or not will depend on the stability of the system.
It was believed, starting with the works of Kelvin \cite{Thomson1883}, that vortex rings in an ideal fluid were ``indestructible''. However, subsequent studies (see e.g.~\cite{WidnallSullivan73}) indicate that vortex ring becomes more unstable as the ratio $c/L$ decreases, being $c$ the radius of the vortex core, and $L$ the radius of the ring. If $L$ is fixed, the limit $c\to 0$ corresponds to the case of vortex filaments.

In this sense, it is conceivable that these deviations could writhe the viscous ring, slowing down the ascent and disintegrating the vorticity into a thicker region of turbulence during the process. 
In the event that this turbulent region continues to be significant for a large class of perturbation $\eta\in\varepsilon_\nu P$ in the inviscid limit ($\nu\to 0$), one might expect the Euler equation \eqref{eq:IE} for the singular initial data \eqref{eq:circular} to lose determinism ($\varepsilon_\nu\to 0$), a phenomenon known in the literature as ``spontaneous stochasticity'' (see e.g.~\cite{EyinkDrivas15,TBM20}). This would align with the nonuniqueness and the turbulence ring  described in our Theorem \ref{thm:main}.

In \cite{bedrossiangermainharropgriffiths23}, it is proved that, in a suitable class, there is a local in time unique Navier-Stokes flow $v_{\eta,\nu}$ originating from \eqref{eq:omegaper}, whose energy becomes finite for positive times.
These solutions can be extended globally in time by picking one of the (potentially non-unique) Leray solutions. It is well-known that these solutions satisfy the energy inequality
$$
\frac{1}{2}\int_{\R^3}
|v_{\eta,\nu}(t,x)|^2\dif x
\leq
\frac{1}{2}\int_{\R^3}
|v_{\eta,\nu}(t_\nu,x)|^2\dif x
-\nu\int_{t_\nu}^t
\int_{\R^3}
|Dv_{\eta,\nu}(s,x)|^2\dif x\dif s,
$$
for any $t_\nu>0$. 
According to the zeroth law of turbulence, it is predicted that the last term will remain uniformly negative in the inviscid limit due to the loss of regularity of turbulent flows.
This phenomenon is also known in the literature as ``anomalous dissipation'' (see e.g.~\cite{BrueCamillo23}).
We believe that, in our particular case, this phenomenon could compensate for the infinite initial energy and eventually lead to the production of dissipative Euler flows, as described in our Theorem \ref{thm:main}.
\bigskip

\begin{figure}[ht!]
	\centering
 \begin{subfigure}{}
\centering
	\adjincludegraphics[height=4.5cm,trim={{.30\width} {.25\width} {.27\width} {.15\width}},clip]{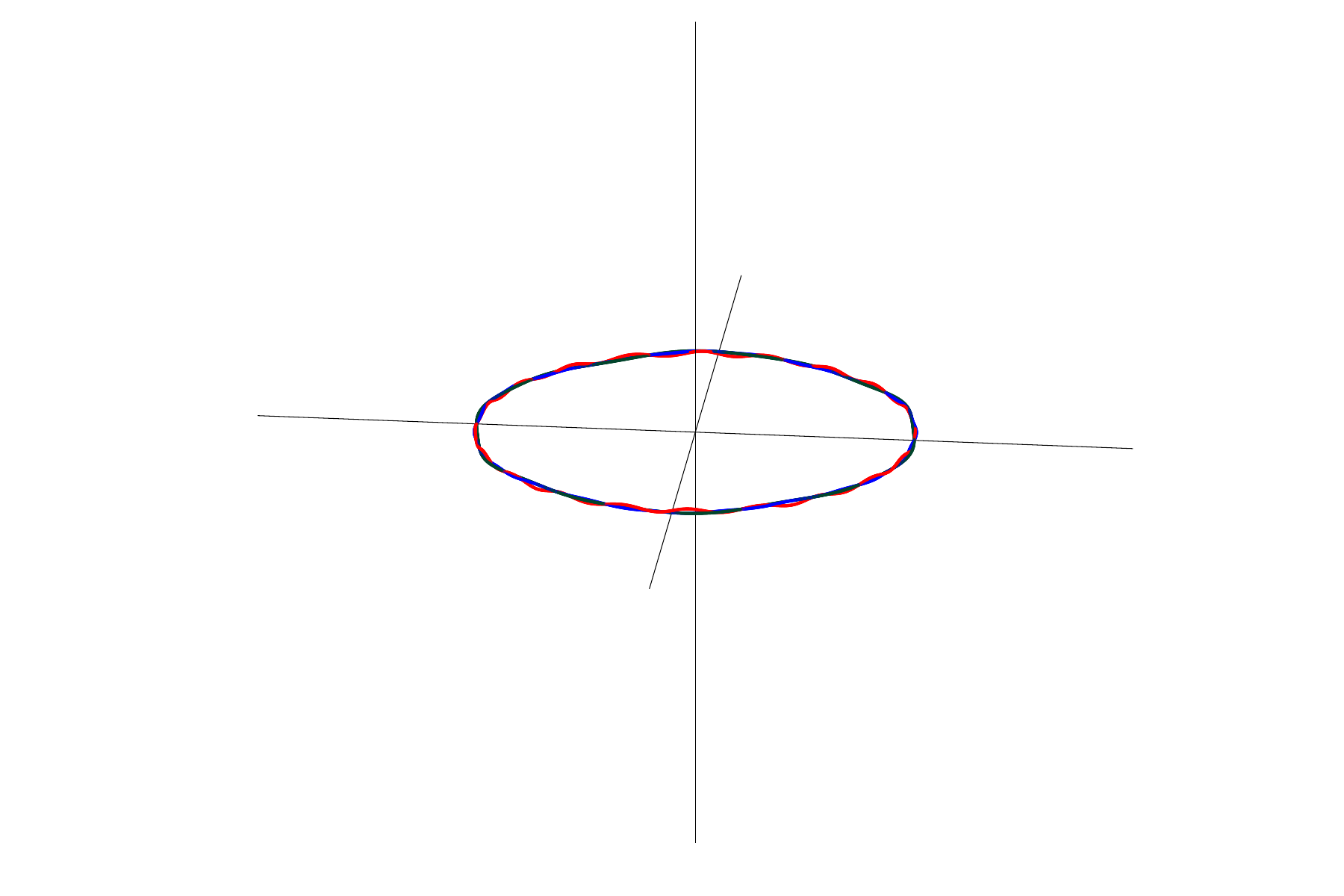}
\end{subfigure}
\begin{subfigure}{}
\centering
	\adjincludegraphics[height=4.5cm,trim={{.30\width} {.25\width} {.27\width} {.15\width}},clip]{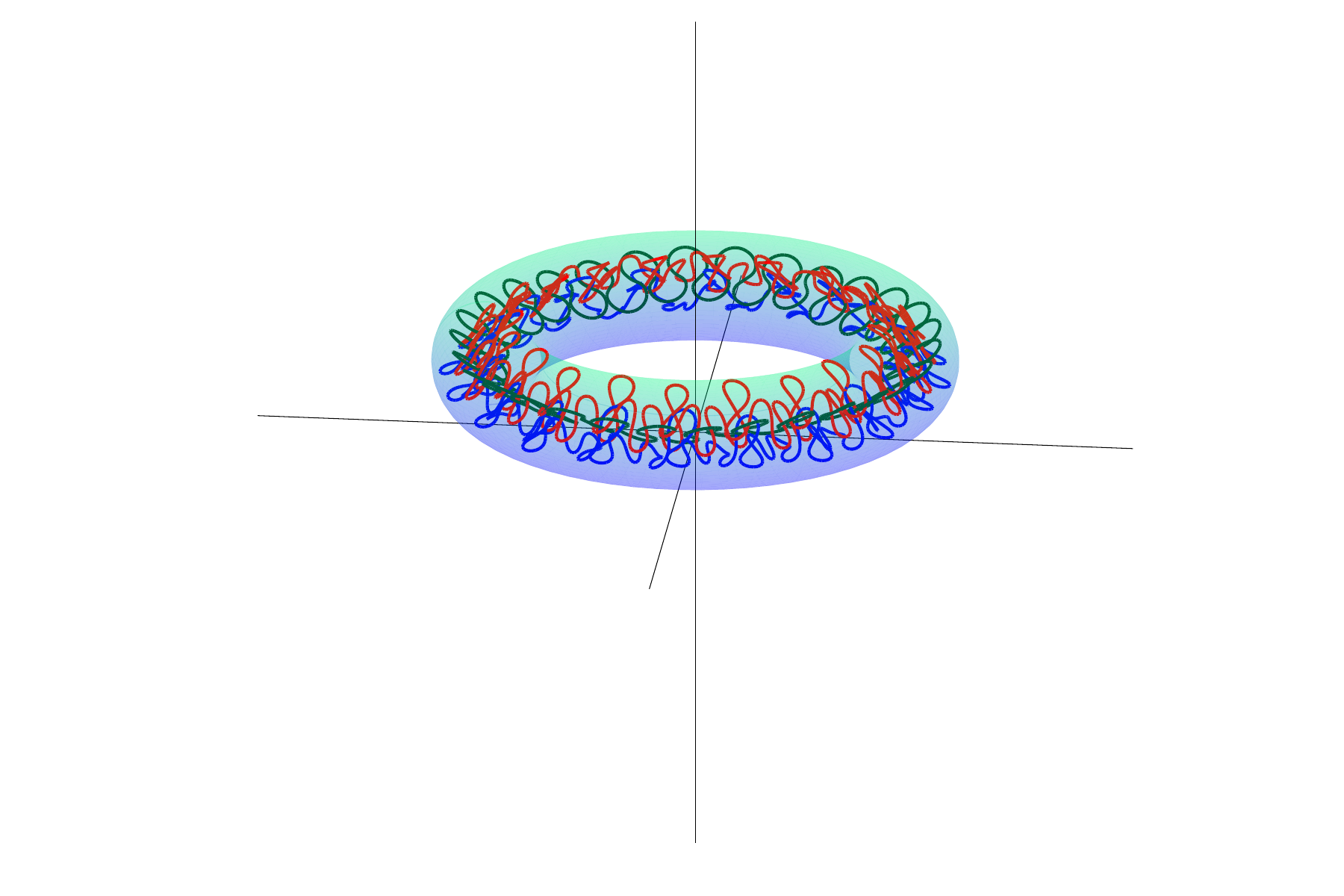}
\end{subfigure}
\bigskip
	\caption{Cartoon of three different vortex filaments, originating from perturbations of the ideal circle, inside the turbulence ring. Left: The initial perturbations $\curveC+\eta$. Right: The vortex filaments at a positive time $t$, included in a ring representing the turbulence zone $\Omega_{\text{tur}}(t)$.}
 \label{fig:turbulencering}
\end{figure}

\subsection{The Reynolds stress}

While nonuniqueness appears inherent to the phenomenon of spontaneous stochasticity, there is still hope to restore determinism by averaging these turbulent solutions. With this aim, we consider some probability measure $\mu$ defined on $P$ (that extends trivially to $\varepsilon_\nu P$), with which we define the ``averaged solution''
\begin{equation}\label{eq:average}
\bar{v}_{\nu}
=\int_{\varepsilon_\nu P}
v_{\eta,\nu}\dif\mu.
\end{equation}
Given that averages do not commute with nonlinearities, this averaged solution satisfies the Navier-Stokes-Reynolds equation
\begin{subequations}\label{eq:NSR:0}
	\begin{align}
	\partial_t\bar{v}_\nu + \mathrm{div}(\bar{v}_\nu\otimes \bar{v}_\nu +R_\nu)+\nabla\bar{p}_\nu
	&=\nu\Delta\bar{v}_\nu,\\
	\mathrm{div}\bar{v}_\nu&=0,\\
	\bar{v}_\nu|_{t=0}&=v^\circ,
	\end{align}
\end{subequations}
where $\bar{p}_\nu$ denotes the averaged pressure, and $R_\nu$ is the ``Reynolds stress''
$$
R_\nu
=\overline{v_\nu\otimes v_\nu}-\bar{v}_\nu\otimes \bar{v}_\nu
=\overline{(v_\nu-\bar{v}_\nu)}\otimes\overline{(v_\nu-\bar{v}_\nu)}.$$
Notice that $R_\nu$ is a positive definite symmetric tensor.
Since $\mathrm{tr}R$ can be absorbed in the pressure, the deviation from being a true Navier-Stokes flow is captured by the traceless Reynolds stress
$$
\mathring{R}_\nu=R_\nu-\frac{1}{3}(\mathrm{tr}R_\nu)I_3.
$$
In turn, this term contributes to the energy density by
$$
e_{\text{sub}}(\bar{v}_\nu,\mathring{R}_\nu)
=\frac{1}{2}|\bar{v}_\nu|^2
+\frac{3}{2}\lambda_{\max}(\mathring{R}_\nu),
$$
where $\lambda_{\max}$ is the largest eigenvalue, which is nonnegative for traceless symmetric matrices. Notice that $e_{\text{sub}}$ coincides with the usual energy density if and only if $\mathring{R}_\nu=0$. 
As discovered in \cite{DeLellisSzekelyhidi10},
the energy density $e_{\text{sub}}$ characterizes the relaxation of the Euler equation, in particular it is convex within the space of subsolutions. 
By applying the Jensen inequality, we deduce that the energy of the averaged solution remains below the averaged energy
$$
\int_{\R^3}e_{\text{sub}}(\bar{v}_\nu,\mathring{R}_\nu)\dif x
\leq\int_{\varepsilon_\nu P}\frac{1}{2}\int_{\R^3}
|v_{\eta,\nu}(t,x)|^2\dif x\dif\mu.
$$
In the inviscid limit, we obtain formally a solution to the Euler-Reynolds equation
\begin{subequations}\label{eq:IER}
	\begin{align}
	\partial_t\bar{v} + \mathrm{div}(\bar{v}\otimes \bar{v})+\nabla\bar{p}
	&=-\mathrm{div}R,\label{eq:IER:1}\\
	\mathrm{div}\bar{v}&=0,\label{eq:IER:2}\\
	\bar{v}|_{t=0}&=v^\circ.
	\end{align}
\end{subequations}
Notice that the nonlinear term may not necessarily converge. However, since it is expected that the averages cancel out highly oscillating terms, let us assume, for simplicity, that this is not the case (otherwise, this contribution would be added to $R$). 
Finally, the Fatou lemma allows to bound the energy by
\begin{align*}
\int_{\R^3}e_{\text{sub}}(\bar{v},\mathring{R})\dif x
&\leq\liminf_{\nu\to 0}\int_{\R^3}e_{\text{sub}}(\bar{v}_\nu,\mathring{R}_\nu)\dif x.
\end{align*}

We have seen that the existence of a subsolution (a solution to the Euler-Reynolds equation) with decreasing energy is a necessary condition for the occurrence of spontaneous stochasticity and anomalous dissipation. As mentioned in the previous section, it is natural to assume that the vorticity of this subsolution accumulates in a turbulence ring that expands around a dynamical filament and makes the energy finite for positive times. Figure \ref{fig:turbulencering} shows a cartoon illustrating these ideas.

By virtue of the h-principle for the Euler equation \cite{DeLellisSzekelyhidi10}, the existence of weak solutions  as described in Theorem \ref{thm:main} is reduced to find a suitable subsolution.
We remark that it remains an open question whether these solutions arise from an inviscid limit as described before.

Notice that the Reynolds stress introduces several degrees of freedom in the system, making the problem overdetermined. Despite this freedom, we will see in the next section how the physical properties we seek end up roughly determining the evolution of the turbulent zone and the energy dissipation.

\section{Sketch of the proof}\label{sec:sketch}

In this section we outline the main steps for the proof of Theorem \ref{thm:main}.
The precise computations and technical details will be rigorously addressed in the following sections.

Our first task consists of constructing a subsolution $(\bar{v},\bar{p},R)$, that is, a solution to the Euler-Reynolds equation \eqref{eq:IER}.
If we interpret the subsolution as the averaged solution \eqref{eq:average}, it makes sense for it to preserve the symmetries of the initial data. In our case, the initial data is axisymmetric without swirl. Specifically, $\omega^\circ=\Gamma\delta_{\curveC}$
is the vector-valued measure satisfying 
$$
\langle \omega^\circ,\varphi\rangle
=\Gamma\int_{\curveC}
\varphi\cdot\dif l
=\Gamma L\int_0^{2\pi}\varphi(Le_r)\cdot e_\theta\dif\theta,
$$
for any test function $\varphi\in C(\R^3)$,
where
$$e_r=(\cos\theta,\sin\theta,0),
\quad
e_\theta = (-\sin\theta,\cos\theta,0),
\quad
e_z=(0,0,1),$$
is the standard orthonormal basis in cylindrical coordinates $(r,\theta,z)$.
Hence, we assume that the vorticity of the subsolution 
is of the form
\begin{equation}\label{ansatz:omegaaxi}
\bar{\omega}(t,x)
=\bar{\omega}_\theta(t,r,z)e_\theta,
\end{equation}
for some vorticity flux $\bar{\omega}_\theta$, which we assume to be bounded for positive times, to be determined. Notice that $\mathrm{div}\bar{\omega}=0$.
It is well-known (see Appendix \ref{sec:ABS}) that the corresponding velocity field
is of the form 
\begin{equation}\label{ansatz:v}
\bar{v}(t,x)
=-\frac{1}{4\pi}\int_{\R^3}\frac{(x-x')\times\bar{\omega}(t,x')}{|x-x'|^3}\dif x'
=\bar{v}_r(t,r,z)e_r+\bar{v}_z(t,r,z)e_z.
\end{equation}
As we aim to attain $R(t,x)\sim\bar{v}\otimes\bar{v}$, we assume that the Reynolds stress is of the form
\begin{equation}\label{ansatz:R}
R(t,x)
=R_{rr}(t,r,z)(e_r\otimes e_r)
+R_{rz}(t,r,z)(e_r\otimes e_z+e_z\otimes e_r) + R_{zz}(t,r,z)(e_z\otimes e_z).
\end{equation}
Similarly, we assume that 
\begin{equation}\label{ansatz:p}
\bar{p}(t,x)=\bar{p}(t,r,z).
\end{equation}
Notice that all the coefficients in \eqref{ansatz:omegaaxi}-\eqref{ansatz:p} do not depend on the azimuth $\theta$. This makes possible to write the Euler-Reynolds equation \eqref{eq:IER} in the (complex) half-plane
\begin{equation}\label{eq:H}
\mathbb{H}:=\{(r,z)\in\R^2\,:\, r>0\},
\end{equation}
by identifying (with slightly abuse of notation)
\begin{equation}\label{eq:vRaxi}
\bar{v}=\left[\begin{array}{c}
\bar{v}_{r} \\
\bar{v}_{z}
\end{array}\right],
\quad\quad
R=\left[\begin{array}{cc}
R_{rr} & R_{rz} \\
R_{rz} & R_{zz}
\end{array}\right].
\end{equation}
Under the axisymmetric without swirl assumption \eqref{ansatz:omegaaxi}-\eqref{ansatz:p}, we prove in Section \ref{sec:Axisymmetric} that the momentum equation \eqref{eq:IER:1} is rewritten for \eqref{eq:vRaxi} as
\begin{equation}\label{eq:IERaxi:q:0}
\partial_t \bar{v}-\bar{\omega}_\theta \bar{v}^\perp+\nabla \bar{q}
=-\frac{1}{r}\mathrm{div}(rR),
\end{equation}
on $[0,T]\times\mathbb{H}$, where 
$$
\bar{q}=\bar{p}+\frac{1}{2}|\bar{v}|^2,
$$
is the Bernoulli pressure. We remark that, by the Biot-Savart law \eqref{ansatz:v}, the velocity field is divergence-free \eqref{eq:IER:2} and satisfies the boundary condition
$\bar{v}_r=0$ on $\partial\mathbb{H}$.

As we were arguing in the previous sections, it is natural to assume that the vorticity is supported in a ring that moves in the vertical direction, and expands outwards. Thus, the cross section 
of the vortex ring in the complex half-plane \eqref{eq:H} is the shifted ball
\begin{equation}\label{eq:shiftedball}
L+ih(t)+B_{c(t)},
\end{equation}
where $h(t)$ and $c(t)$ denote the height 
and thickness of the ring respectively (see Figure \ref{Fig:crosssection}).
Additionally, due to the vorticity, this ring has an internal rotation with angular speed $a(t,\rho)$. 
Hence, we parameterize the section of the vortex ring by
\begin{equation}\label{ansatz:gamma}
\gamma(t,\rho,\alpha)
=L+ih(t)+c(t)\rho e^{i(\alpha+a(t,\rho))},
\end{equation}
for $(\rho,\alpha)\in[0,1]\times\T$.
Notice that $\gamma(t)$ defines a diffeomorphism with Jacobian
$$
\det(\nabla\gamma(t))
=\rho c(t)^2.
$$
Thus, we take the vorticity flux vanishing outside $\gamma$, while inside it is given by
\begin{equation}\label{ansatz:omegatheta}
\bar{\omega}_\theta(t,\gamma)
=\frac{\varpi(\rho)}{c(t)^2},
\end{equation}
where $\gamma=\gamma(t,\rho,\alpha)$,
for some profile $\varpi$ to be determined. 
The denominator in \eqref{ansatz:omegatheta} is a normalization factor necessary to achieve the initial condition \eqref{eq:circular}, that is,
$$
\bar{\omega}(t)\overset{*}{\rightharpoonup}\Gamma\delta_{\curveC},
$$
in the space of measures as $t\to 0$.
Specifically, 
$\varpi$ is normalized in such a way that the vorticity flux is conserved
\begin{equation}\label{compcond1}
\int_{\mathbb{H}}\bar{\omega}_\theta\dif\zeta=
2\pi\int_0^1\varpi\rho\dif\rho=\Gamma.
\end{equation}
We remark that we could choose $\varpi$ to depend on $t$ and $\alpha$, but since this additional freedom is not needed for our purposes, we let $\varpi$ depend only on the internal radius $\rho$ for simplicity.

\begin{figure}[h!]
	\centering
 \adjincludegraphics[height=6.0cm,trim={{.35\width} {.11\width} {.35\width} {.12\width}},clip]{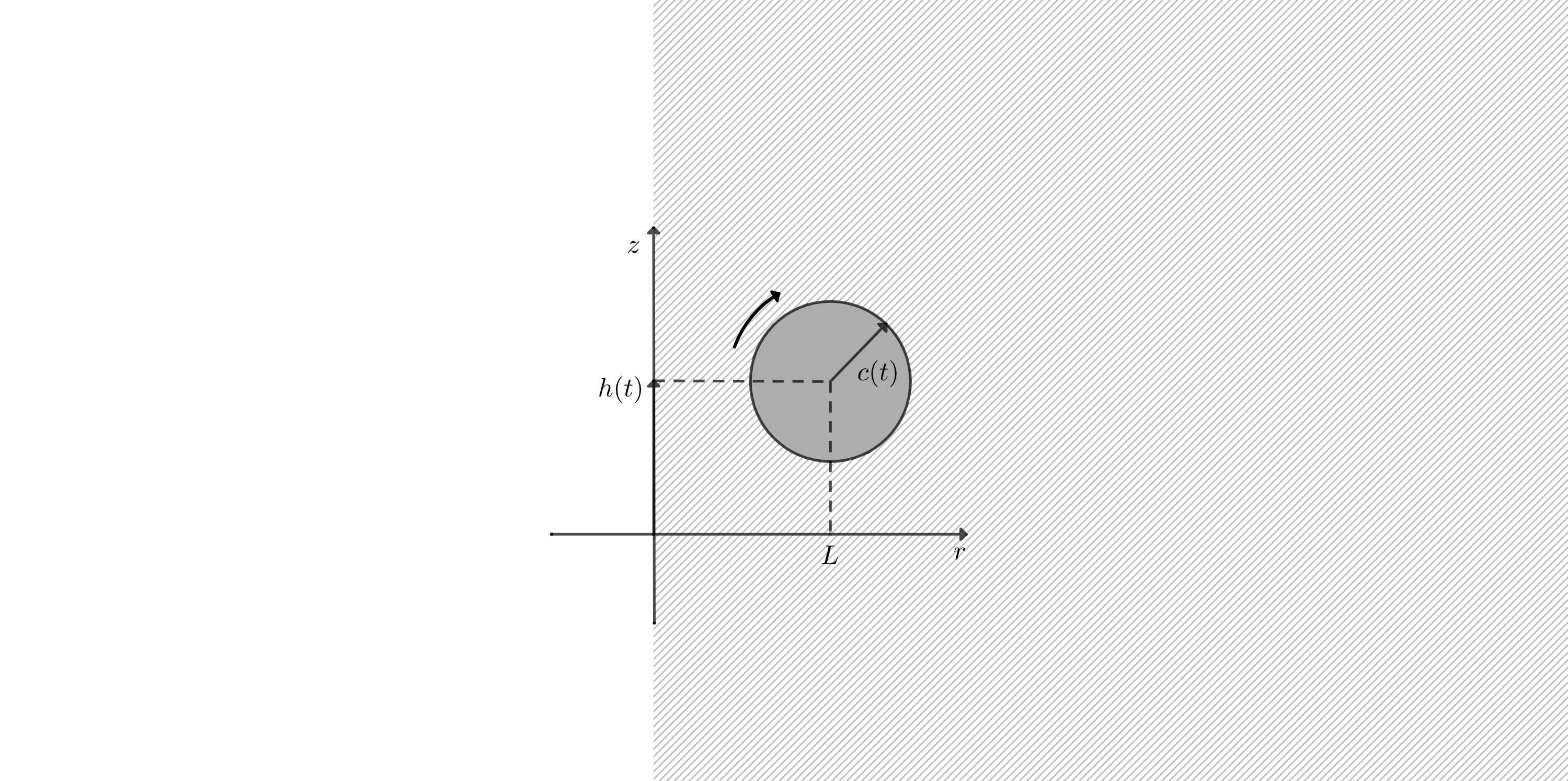}
\caption{The vorticity flux $\bar{\omega}_\theta(t)$ is supported in the shifted ball \eqref{eq:shiftedball}. The dashed area corresponds to the complex half-plane $\mathbb{H}$.}
\label{Fig:crosssection}
\end{figure}

Once the vorticity is prescribed, the velocity is determined by the Biot-Savart law \eqref{ansatz:v}. 
In Section \ref{sec:velocity} we decompose the velocity inside the vortex ring into 
$$
\bar{v}=\bar{v}_{\uparrow}+\bar{v}_{\circlearrowright},
$$
where
\begin{equation}\label{eq:vupcircle}
\bar{v}_{\uparrow}=-\frac{i\Gamma}{4\pi L}\log c+O(1),
\quad\quad 
\bar{v}_{\circlearrowright}=-\frac{i\rho\Gamma_\rho}{2\pi c} e^{i(\alpha+a)}+O(1),
\end{equation}
as $t\to 0$. 
The terms $\bar{v}_{\uparrow}$ and $\bar{v}_{\circlearrowright}$ correspond to the vertical displacement and internal rotation of the ring respectively. For $\Gamma>0$ (respectively $\Gamma<0$) the ring moves upward (downward) along the symmetry axis. In each layer $\rho$ of the ring, the particles rotate (counter) clockwise with speed proportional to

\begin{equation}\label{eq:Gamma_rhodefinition}
\Gamma_\rho
\coloneqq 2\pi\int_0^{1}\varpi(\rho\lambda)
\lambda\dif\lambda.    
\end{equation}

Once the velocity is known, we determine the pressure by means of the Bernoulli law.
More precisely, in Section \ref{sec:pressure} we split the Bernoulli pressure into $\bar{q}=\bar{q}_0+\bar{q}_1$. The main pressure 
$\bar{q}_0$ is taken suitably in such a way that $(\bar{v},\bar{q})$ becomes a (true) solution to the Euler equation outside the turbulence zone $\Omega_{\text{tur}}$, namely
\begin{equation}\label{eq:vq0}
\partial_t\bar{v}+\nabla\bar{q}_0
=\bar{\omega}_\theta\partial_t\gamma^\perp.
\end{equation}
The second pressure $\bar{q}_1$ is a smooth corrector, which is supported inside $\Omega_{\text{tur}}$, to be determined.

Once the velocity and the pressure are known, it remains to determine the Reynolds stress. 
In light of \eqref{eq:vq0}, the momentum equation \eqref{eq:IERaxi:q:0} reads as
\begin{equation}\label{eq:divR}
-\frac{1}{r}\mathrm{div}(rR)
= \bar{\omega}_\theta(\partial_t\gamma - \bar{v})^\perp
+\nabla \bar{q}_1.
\end{equation}
In other words, the Reynolds stress measures the discrepancy between the evolution of the vortex ring and the averaged velocity, which we aim to minimize.
By applying the expressions \eqref{ansatz:gamma} and \eqref{eq:vupcircle}, this discrepancy satisfies the asymptotic
\begin{equation}\label{eq:gamma-v}
\partial_t\gamma-\bar{v}
=
\dot{c}\rho e^{i(\alpha+a)}
+i\left(\dot{h}+\frac{\Gamma}{4\pi L}\log c\right)
+\left(c\dot{a}+\frac{\Gamma_\rho}{2\pi c}\right)i\rho e^{i(\alpha+a)} + O(1),
\end{equation}
as $t\to 0$.
The first term arises from opening the turbulence ring. The only possible way to eliminate it is by taking $c(t)=0$, but then, as we mentioned at the beginning of the introduction, the Euler equation cannot be solved. Once we assume $c(t)>0$ for positive times, we can cancel the second term in \eqref{eq:gamma-v} by choosing the height of the ring satisfying
\begin{equation}\label{eq:height}
\dot{h}(t)
=-\frac{\Gamma}{4\pi L}\log c(t).
\end{equation}
Similarly, we can cancel the third term in \eqref{eq:gamma-v} by choosing the angle speed satisfying
\begin{equation}\label{eq:angle}
\dot{a}(t,\rho)=-\frac{\Gamma_\rho}{2\pi c(t)^2}.
\end{equation}
The equations \eqref{eq:height} and \eqref{eq:angle} determine, for short times, the evolution and shape of the turbulence ring  \eqref{ansatz:gamma} in terms of $c$, which still needs to be determined. Thus, \eqref{eq:gamma-v} reduces to
\begin{equation}\label{eq:gamma-v:1}
\partial_t\gamma-\bar{v}
=
\dot{c}\rho e^{i(\alpha+a)}+ O(1).
\end{equation}
Therefore, the momentum equation \eqref{eq:divR} reads as
\begin{equation}\label{eq:divR:1}
-\frac{1}{r}\mathrm{div}(rR)
=\frac{\varpi}{c^2}\left(\dot{c}\rho e^{i(\alpha+a)}+O(1)\right)+\nabla\bar{q}_1.
\end{equation}
As we aim to concentrate the turbulence inside the support of the vorticity, 
we need to invert the divergence operator in such a way that 
\begin{equation}\label{eq:suppR}
\mathrm{supp}(R(t))\subset L+ih(t) + B_{c(t)}.
\end{equation}
Given that $R$ must be a symmetric matrix, it turns out that the Bogovskii operator, despite being a seemingly natural choice, is not suitable for this particular application. As we will explain in Appendix \ref{section:antidivergence}, by employing a Fourier expansion already present in the work  \cite{IsettOh16}, it is possible to invert the divergence satisfying \eqref{eq:suppR} provided that two compatibility conditions are satisfied. 
We show in Section \ref{sec:Reynolds} that
these conditions can be ensured by taking the corrected pressure $\bar{q}_1$ properly, and the vorticity profile satisfying
\begin{equation}\label{compcond2}
\int_0^1\varpi(\rho)\rho^3\dif\rho=0.
\end{equation}
It is straightforward to check that this condition is equivalent to the one in \cite[Lemma 4.1]{Mengualpp}.

As this antidivergence operator gains a factor $c$ from the thickness of the support \eqref{eq:suppR}, it turns out that the corresponding Reynolds stress \eqref{eq:divR:1} behaves for short times like
$$
R\sim \frac{\dot{c}}{c}.
$$
Since $\bar{v}\sim\frac{1}{c}$ and we expect 
$$
R\sim\bar{v}\otimes\bar{v}\sim\frac{1}{c^2},
$$ we finally obtain the scaling compatibility condition  $c\dot{c}\sim 1$,
or equivalently,
\begin{equation}\label{eq:thickness}
c(t)=\sqrt{\nu_{\text{tur}}t},
\quad\quad
\nu_{\text{tur}}=\frac{|\Gamma|}{\text{Re}_{\text{tur}}},
\end{equation}
where $\nu_{\text{tur}}$ is the turbulence viscosity, and $\text{Re}_{\text{tur}}$ is the turbulence Reynolds number (see e.g.~\cite{Pope00}).
While $|\Gamma|$ arises from dimensional analysis,  we believe that $\text{Re}_{\text{tur}}$ could be experiment-dependent. For instance, if our experiment were able to replicate the ideal conditions \eqref{eq:IE}\eqref{eq:circular} without any external noise, then we would have $\nu_{\text{tur}}=0$. However, as we argued in the previous sections, it is conceivable that $\nu_{\text{tur}}>0$ depending on the perturbation $\varepsilon_\nu P$.

Once $c$ is determined by \eqref{eq:thickness}, the height \eqref{eq:height} and angle speed \eqref{eq:angle} are given by
\begin{equation}\label{eq:ha}
h(t)=\frac{\Gamma}{8\pi L}(1-2\log c(t))t,
\quad\quad
a(t,\rho)=-\frac{\Gamma_\rho}{2\pi\nu_{\text{tur}}}\log t.
\end{equation}
We remark that the formulas above are first order approximations. In particular, the constant in $h$ could be chosen more accurately by taking into account lower order terms in \eqref{eq:gamma-v}.

In Section \ref{sec:energy} we compute 
the energy of the subsolution. On the one hand, a straightforward computation shows that
$$\frac{1}{2}\int_{\R^3}|\bar{v}(t,x)|^2\dif x=-\frac{L\Gamma^2}{2}\log c(t)+O(1).$$
We remark that the higher order term does not depend on the choice of the vorticity flux. On the other hand, 
$$\frac{3}{2}\int_{\R^3}\lambda_{\max}(\mathring{R})\dif x\lesssim |B_c|||rR||_{L^\infty}=O(1).$$
By combining the contribution of the energy coming from $\bar{v}$ and $R$, we deduce that the energy of the subsolution satisfies \eqref{eq:thm:energy}. The energy of the solutions obtained by convex integration would have slightly more energy, and we can ensure that it is decreasing. 

Once the subsolution is constructed, the true solutions for the Euler equation will be obtained by applying the h-principle  \cite{DeLellisSzekelyhidi10}.
However, since this approach deals with solutions with finite energy, we need to adapt it to the scenario where the energy blows up as time approaches zero. We elaborate on this adaptation in Section \ref{sec:hprinciple}.

\section{Axisymmetric Euler-Reynolds equation}\label{sec:Axisymmetric}

In this section we write the Euler-Reynolds equation \eqref{eq:IER} in the axisymmetric without swirl setting \eqref{ansatz:omegaaxi}-\eqref{ansatz:p}.


\begin{lemma}\label{prop:IER:axi}
Let us suppose that the triple $(\bar{v},\bar{p},R)$ is axisymetric without swirl \eqref{ansatz:omegaaxi}-\eqref{ansatz:p}. Then, the Euler-Reynolds equation \eqref{eq:IER} is rewritten for \eqref{eq:vRaxi} as
\begin{subequations}\label{eq:IERaxi}
	\begin{align}
	\partial_t(r\bar{v}) + \mathrm{div}(r\bar{v}\otimes\bar{v})+r\nabla\bar{p}
	&=-\mathrm{div}(rR),\label{eq:IERaxi:1}\\
	\mathrm{div}(r\bar{v})&=0,\label{eq:IERaxi:2}\\
	\bar{v}|_{t=0}&=v^\circ,
	\end{align}
\end{subequations}
on $[0,T]\times\mathbb{H}$, coupled with the boundary condition
\begin{equation}\label{eq:vr=0}
\bar{v}_r=0\quad\textrm{on}\quad\partial\mathbb{H}.
\end{equation}
\end{lemma}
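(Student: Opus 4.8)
The plan is to reduce the vector Euler--Reynolds system \eqref{eq:IER} on $[0,T]\times\R^3$ to a system on the two-dimensional half-plane $\mathbb{H}$ by a direct computation in cylindrical coordinates $(r,\theta,z)$, exploiting the axisymmetric-without-swirl structure \eqref{ansatz:omegaaxi}--\eqref{ansatz:p}. First I would recall that for any $\theta$-independent scalar $f(r,z)$ one has $\nabla f = (\partial_r f)e_r + (\partial_z f)e_z$, and that the differential identities $\partial_\theta e_r = e_\theta$, $\partial_\theta e_\theta = -e_r$, $\partial_\theta e_z = 0$ hold; everything in the statement follows from bookkeeping these carefully. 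The divergence-free condition \eqref{eq:IER:2} for $\bar v = \bar v_r e_r + \bar v_z e_z$ expands, in cylindrical coordinates, to $\tfrac1r\partial_r(r\bar v_r) + \partial_z \bar v_z = 0$, which is exactly $\mathrm{div}(r\bar v)=0$ when $\mathrm{div}$ is the flat two-dimensional divergence on $\mathbb{H}$ applied to the vector $(r\bar v_r, r\bar v_z)$; this gives \eqref{eq:IERaxi:2}. The boundary condition \eqref{eq:vr=0} is immediate from the Biot--Savart representation \eqref{ansatz:v}, as already noted in the excerpt: the kernel integrated against $\bar\omega = \bar\omega_\theta e_\theta$ produces no $e_r$ component on the axis $r=0$ by symmetry (or alternatively, regularity of $\bar v$ across the axis forces $\bar v_r=0$ there).

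Next I would treat the momentum equation \eqref{eq:IER:1}. The transport term $\mathrm{div}(\bar v\otimes\bar v) = (\bar v\cdot\nabla)\bar v$ (using $\mathrm{div}\,\bar v=0$) must be computed with the moving frame: writing $\bar v\cdot\nabla = \bar v_r\partial_r + \bar v_z\partial_z$ (no $\theta$-derivative since the coefficients are $\theta$-independent, but the basis vectors still must be differentiated only in $\theta$, which does not occur here), one gets $(\bar v\cdot\nabla)\bar v = \big(\bar v_r\partial_r\bar v_r + \bar v_z\partial_z\bar v_r\big)e_r + \big(\bar v_r\partial_r\bar v_z + \bar v_z\partial_z\bar v_z\big)e_z$, with \emph{no} centrifugal term because there is no swirl component. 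The key algebraic manipulation is then to pass from $\partial_t\bar v + (\bar v\cdot\nabla)\bar v + \nabla\bar p = -\mathrm{div}R$ to its $r$-weighted form \eqref{eq:IERaxi:1}: one multiplies by $r$ and checks that $r\,\mathrm{div}(\bar v\otimes\bar v)$ (3D) plus the lower-order terms generated by the frame equals the flat 2D divergence $\mathrm{div}(r\bar v\otimes\bar v)$ on $\mathbb{H}$ up to the terms that get absorbed. Concretely, $\mathrm{div}_{2D}(r\bar v\otimes\bar v) = \bar v\,\mathrm{div}_{2D}(r\bar v) + r(\bar v\cdot\nabla)\bar v = r(\bar v\cdot\nabla)\bar v$ by \eqref{eq:IERaxi:2}, so the weighting is consistent; similarly $\partial_t(r\bar v) = r\partial_t\bar v$ since $r$ is time-independent, and $\mathrm{div}_{2D}(rR) = r\,\mathrm{div}_{2D}R + R\,\nabla r = r\,\mathrm{div}_{2D}R + (R_{rr}, R_{rz})$, which accounts for the extra geometric term; one verifies that the analogous extra term from the 3D tensor divergence of $R$ (the $-R_{\theta\theta}/r\, e_r$ type contribution, here $R_{\theta\theta}=0$ by the ansatz \eqref{ansatz:R}) matches. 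Assembling these identities yields \eqref{eq:IERaxi:1}. The passage from \eqref{eq:IERaxi:1} to the Bernoulli form \eqref{eq:IERaxi:q:0} quoted later in the text is then the elementary rewriting $(\bar v\cdot\nabla)\bar v = \nabla(\tfrac12|\bar v|^2) - \bar v\times(\nabla\times\bar v)$ together with $\nabla\times\bar v = \bar\omega_\theta e_\theta$ and $e_\theta\times(\bar v_r e_r + \bar v_z e_z)$ producing the $\bar v^\perp$ term.

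The main obstacle is purely organizational rather than conceptual: keeping track of the geometric source terms produced by differentiating the rotating orthonormal frame $\{e_r,e_\theta,e_z\}$, and verifying that every such term either vanishes because there is no swirl (no $e_\theta$-component of $\bar v$, no $\theta\theta$-component of $R$) or is exactly the term needed to convert a 3D divergence into the $r$-weighted 2D divergence on $\mathbb{H}$. I would organize the proof by (i) expanding $\mathrm{div}(r\bar v)=0$; (ii) expanding $\nabla\times\bar v$ to confirm it equals $\bar\omega_\theta e_\theta$, consistent with \eqref{ansatz:omegaaxi}; (iii) expanding $\mathrm{div}(\bar v\otimes\bar v)$ and $\mathrm{div}R$ in cylindrical coordinates, isolating the purely radial geometric corrections; and (iv) multiplying through by $r$ and matching against the flat 2D operators, using \eqref{eq:IERaxi:2} to discard the divergence-of-$r\bar v$ pieces. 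A useful sanity check I would include is the special case $R=0$, where \eqref{eq:IERaxi} must reduce to the classical axisymmetric-without-swirl Euler equations in the form found in the literature (e.g.\ as used in \cite{FengSverak15}); this confirms the $r$-weighting is the standard one. No delicate analysis (no functional-analytic estimates, no limiting arguments) enters here --- the lemma is a change-of-variables identity, and the only care required is the frame bookkeeping.
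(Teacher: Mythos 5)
Your argument is correct, and it reaches the same key identity $\mathrm{div}_x T=\frac{1}{r}\mathrm{div}_\zeta(rT)$ for $T\in\{\bar v\otimes\bar v,\,R\}$ by a genuinely different route than the paper. The paper's proof writes each dyad $e_r\otimes e_r$, $e_r\otimes e_z+e_z\otimes e_r$, $e_z\otimes e_z$ out as an explicit $3\times 3$ matrix in the cartesian variables $x_1,x_2$ and computes the cartesian divergence columnwise, reducing everything to the scalar identity $\partial_1(f\frac{x_1}{r})+\partial_2(f\frac{x_2}{r})=\frac{1}{r}\partial_r(rf)$; no cylindrical tensor-calculus machinery is invoked, so the proof is elementary and self-contained. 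You instead appeal to the standard formula for $\mathrm{div}$ of a symmetric 2-tensor in cylindrical coordinates, observe that the only geometric corrections it produces relative to the flat planar divergence are $\tfrac{1}{r}(T_{rr}-T_{\theta\theta})$ in the $r$-component and $\tfrac{1}{r}T_{rz}$ in the $z$-component, and then note that the $T_{rr}/r$, $T_{rz}/r$ pieces are precisely the Leibniz terms $T\nabla r$ in $\mathrm{div}_\zeta(rT)=r\mathrm{div}_\zeta T+T\nabla r$, while the $-T_{\theta\theta}/r$ piece vanishes because the ansatz has no $\theta\theta$ component. Both arguments are correct and of comparable length; the paper's version has the advantage of not presupposing the cylindrical divergence formula for tensors (which many readers would need to re-derive anyway), while yours isolates more transparently \emph{why} the without-swirl hypothesis is essential: it is exactly the vanishing of $T_{\theta\theta}$ (and $T_{r\theta}$, $T_{\theta z}$) that makes the weighted 2D divergence reproduce the 3D one. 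One small stylistic remark: in your discussion of $\bar v\otimes\bar v$ you go through $(\bar v\cdot\nabla)\bar v$ and then reconstitute it as $\frac{1}{r}\mathrm{div}_\zeta(r\bar v\otimes\bar v)$; this works because $\mathrm{div}_\zeta(r\bar v)=0$, but it is cleaner — and is how the paper closes the argument — to just observe that $\bar v\otimes\bar v$ has the same structural form as $R$ (symmetric, $\theta$-independent, no $\theta$-components), so the identity already proved for $R$ applies verbatim.
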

\begin{proof}
The equivalence between \eqref{eq:IER:2} and \eqref{eq:IERaxi:2} follows from the vector calculus identity
$$
\mathrm{div}_x\bar{v}
=\frac{1}{r}\mathrm{div}_\zeta(r\bar{v}).
$$
We remark that $\bar{v}$ in the left hand side represents the $3$-vector in cartesian coordinates \eqref{ansatz:v}, while $\bar{v}$ in the right hand side represents the $2$-vector in axisymmetric coordinates \eqref{eq:vRaxi}.
The impermeability condition \eqref{eq:vr=0} is equivalent to the continuity of the velocity field in the $x_3$-axis. 
For the equivalence between \eqref{eq:IER:1} and
\eqref{eq:IERaxi:1}, we check that
$$
\mathrm{div}_x R
=\frac{1}{r}\mathrm{div}_\zeta(rR).
$$
We remark that $R$ in the left hand side represents the $3\times 3$ matrix in cartesian coordinates \eqref{ansatz:R}, while $R$ in the right hand side represents the $2\times 2$ matrix in axisymmetric coordinates \eqref{eq:vRaxi}.
Notice that
	\begin{align*}
	e_r\otimes e_r
	&=\frac{1}{r^2}
	\left[\begin{array}{cc|c}
	x_1^2 & x_1x_2 & 0 \\
	x_1x_2 & x_2^2 & 0 \\ \hline
	0 & 0 & 0
	\end{array}\right],\\
	e_r\otimes e_z
	+ e_z\otimes e_r
	&=\frac{1}{r}
	\left[\begin{array}{cc|c}
	0 & 0 & x_1 \\
	0 & 0 & x_2 \\ \hline
	x_1 & x_2 & 0
	\end{array}\right],\\
	e_z\otimes e_z
	&=\left[\begin{array}{cc|c}
	0 & 0 & 0 \\
	0 & 0 & 0 \\ \hline
	0 & 0 & 1
	\end{array}\right].
	\end{align*}
	Hence, by writing $R=(R_1|R_2|R_3)$, we get
	\begin{align*}
	\mathrm{div}_{x}(R_1)
	&=\partial_1\left(R_{rr}\frac{x_1^2}{r^2}\right)
	+\partial_2\left(R_{rr}\frac{x_1x_2}{r^2}\right)
	+\partial_3\left(R_{rz}\frac{x_1}{r}\right)\\
	&=\frac{x_1}{r}\left[\partial_1\left(R_{rr}\frac{x_1}{r}\right)
	+\partial_2\left(R_{rr}\frac{x_2}{r}\right)
	+\partial_3R_{rz}\right]
	+R_{rr}\left[\frac{x_1}{r}\partial_1\left(\frac{x_1}{r}\right)
	+\frac{x_2}{r}\partial_2\left(\frac{x_1}{r}\right)\right]\\
	&=\frac{x_1}{r}\left[\frac{1}{r}\partial_r(rR_{rr})
	+\partial_z R_{rz}\right],\\
	\mathrm{div}_{x}(R_2)
	&=\partial_1\left(R_{rr}\frac{x_1x_2}{r^2}\right)
	+\partial_2\left(R_{rr}\frac{x_2^2}{r^2}\right)
	+\partial_3\left(R_{rz}\frac{x_2}{r}\right)\\
	&=\frac{x_2}{r}\left[\partial_1\left(R_{rr}\frac{x_1}{r}\right)
	+\partial_2\left(R_{rr}\frac{x_2}{r}\right)
	+\partial_3R_{rz}\right]
	+R_{rr}\left[\frac{x_1}{r}\partial_1\left(\frac{x_2}{r}\right)
	+\frac{x_2}{r}\partial_2\left(\frac{x_2}{r}\right)\right]\\
	&=\frac{x_2}{r}\left[\frac{1}{r}\partial_r(rR_{rr})
	+\partial_z R_{rz}\right],
 \end{align*}
 and
 \begin{align*}
	\mathrm{dix}_x(R_3)
	&=\partial_1\left(R_{rz}\frac{x_1}{r}\right)+\partial_2\left(R_{rz}\frac{x_2}{r}\right)+\partial_3 R_{zz}\\
	&=\frac{1}{r}\partial_r(rR_{rz})+\partial_z R_{zz},
	\end{align*}
	where we have applied the vector calculus identity (for $f=R_{rr}$ and $R_{rz}$)
	$$
	\partial_1\left(f\frac{x_1}{r}\right)
	+\partial_2\left(f\frac{x_2}{r}\right)
	=\mathrm{div}_x(fe_r)
	=\frac{1}{r}\partial_r(rf),$$
	and also
	\begin{align*}
	\partial_1\left(\frac{x_1}{r}\right)
	=\frac{x_2^2}{r^3},
	\quad\quad
	\partial_2\left(\frac{x_2}{r}\right)
	=\frac{x_1^2}{r^3}
	\quad\quad
	\partial_1\left(\frac{x_2}{r}\right)
	=\partial_2\left(\frac{x_1}{r}\right)
	=-\frac{x_1 x_2}{r^3}.
	\end{align*}
	Therefore,
	\begin{align*}
	\mathrm{div}_xR
	=\left[\frac{1}{r}\partial_r(rR_{rr})
	+\partial_z R_{rz}\right]e_r
	+\left[\frac{1}{r}\partial_r(rR_{rz})
	+\partial_z R_{zz}\right]e_z.
	\end{align*}
Analogously, the matrix 
$$
\bar{v}\otimes\bar{v}
=\bar{v}_r^2(e_r\otimes e_r)
+\bar{v}_r\bar{v}_z(e_r\otimes e_z+e_z\otimes e_r)
+\bar{v}_z^2(e_z\otimes e_z),
$$ 
satisfies the same formula as $R$.  
\end{proof}

\begin{prop} 
In the setting of Lemma \ref{prop:IER:axi}, the axisymmetric without swirl Euler-Reynolds equation \eqref{eq:IERaxi}\eqref{eq:vr=0} is equivalent to
\begin{subequations}\label{eq:IERaxi:q}
\begin{align}
r(\partial_t \bar{v}-\bar{\omega}_\theta \bar{v}^\perp+\nabla \bar{q})
&=-\mathrm{div}(rR),\label{eq:IERaxi:q:1}\\
\bar{v}|_{t=0}&=v^\circ
\end{align}
\end{subequations}
where
$$\bar{q}=\bar{p}+\frac{1}{2}|\bar{v}|^2$$
is the Bernoulli pressure.
\end{prop}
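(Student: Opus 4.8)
The plan is to turn the conservative momentum equation \eqref{eq:IERaxi:1} into the Bernoulli form \eqref{eq:IERaxi:q:1} by elementary vector calculus in the half-plane $\mathbb{H}\subset\R^2$, and then to observe that every step is reversible. The initial condition $\bar v|_{t=0}=v^\circ$ is common to both formulations; the divergence-free constraint \eqref{eq:IERaxi:2} and the impermeability condition \eqref{eq:vr=0} are already encoded in the axisymmetric without swirl ansatz \eqref{ansatz:omegaaxi} and \eqref{ansatz:v} (the velocity being recovered from $\bar\omega_\theta$ via the Biot-Savart law, see Appendix \ref{sec:ABS}, hence divergence free with $\bar v_r=0$ on $\partial\mathbb{H}$ and with $\nabla\times\bar v=\bar\omega_\theta e_\theta$). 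Thus only the momentum equation carries content.

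First, since $r$ is time independent, $\partial_t(r\bar v)=r\,\partial_t\bar v$. Second, the product rule for the tensor divergence gives
\[
\mathrm{div}(r\,\bar v\otimes\bar v)=\bar v\,\mathrm{div}(r\bar v)+r\,(\bar v\cdot\nabla)\bar v ,
\]
and the first term on the right vanishes by \eqref{eq:IERaxi:2}. Third, I invoke the planar Bernoulli identity, valid for any vector field $\bar v=(\bar v_r,\bar v_z)$ on $\mathbb{H}$,
\[
(\bar v\cdot\nabla)\bar v=\nabla\tfrac12|\bar v|^2+\big(\partial_r\bar v_z-\partial_z\bar v_r\big)\,\bar v^\perp ,
\qquad \bar v^\perp=(-\bar v_z,\bar v_r),
\]
which is checked by expanding both sides in components. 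Finally, the planar scalar curl appearing there is identified with the azimuthal vorticity: the cylindrical expression of $\nabla\times\bar v$ for a swirl-free field gives $\bar\omega_\theta=\partial_z\bar v_r-\partial_r\bar v_z$, hence $\big(\partial_r\bar v_z-\partial_z\bar v_r\big)\bar v^\perp=-\bar\omega_\theta\bar v^\perp$.

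Substituting these facts into \eqref{eq:IERaxi:1} yields
\[
r\Big(\partial_t\bar v-\bar\omega_\theta\bar v^\perp+\nabla\big(\bar p+\tfrac12|\bar v|^2\big)\Big)=-\mathrm{div}(rR),
\]
which is exactly \eqref{eq:IERaxi:q:1} with $\bar q=\bar p+\tfrac12|\bar v|^2$. Conversely, the same identities read backwards turn \eqref{eq:IERaxi:q:1} into \eqref{eq:IERaxi:1}: rewrite $-\bar\omega_\theta\bar v^\perp+\nabla\tfrac12|\bar v|^2$ as $(\bar v\cdot\nabla)\bar v$, use \eqref{eq:IERaxi:2} once more to recombine $r(\bar v\cdot\nabla)\bar v$ with the vanishing term $\bar v\,\mathrm{div}(r\bar v)$ into $\mathrm{div}(r\bar v\otimes\bar v)$, and restore $\partial_t(r\bar v)=r\,\partial_t\bar v$; the equations \eqref{eq:IERaxi:2} and \eqref{eq:vr=0} are untouched. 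This gives the stated equivalence.

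The argument involves no genuine obstacle --- it is a direct computation. The only delicate points are of bookkeeping type: performing every identity in $\R^2$ rather than in $\R^3$ (as already emphasized in the proof of Lemma \ref{prop:IER:axi}), respecting the sign conventions for $\bar v^\perp=i\bar v$ and for $\bar\omega_\theta$ relative to the planar scalar curl, and keeping the weight $r$ throughout so that nothing degenerates on the axis $\partial\mathbb{H}$.
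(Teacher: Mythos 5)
Your argument is correct and follows the same route as the paper: use $\mathrm{div}_\zeta(r\bar v)=0$ to reduce $\mathrm{div}_\zeta(r\bar v\otimes\bar v)$ to $r(\bar v\cdot\nabla_\zeta)\bar v$, then apply the planar Bernoulli identity $(\bar v\cdot\nabla_\zeta)\bar v=\nabla_\zeta\tfrac12|\bar v|^2+(\mathrm{curl}_\zeta\bar v)\bar v^\perp$ together with $\mathrm{curl}_\zeta\bar v=-\bar\omega_\theta$. The paper's proof is simply a more condensed version of this same computation (it also records, as you do, that \eqref{eq:IERaxi:2} and \eqref{eq:vr=0} hold automatically from the Biot--Savart ansatz).
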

\begin{proof}
By the Biot-Savart law \eqref{ansatz:omegaaxi}\eqref{ansatz:v}, the conditions \eqref{eq:IERaxi:2} and \eqref{eq:vr=0} are automatically satisfied, while
$$
\bar{\omega}=\mathrm{curl}_x\bar{v}
=(\partial_z\bar{v}_r-\partial_r\bar{v}_z)e_\theta.
$$
Therefore, we have
\begin{equation}\label{eq:divcurlv}
\mathrm{div}_\zeta(r\bar{v})=0,
\quad\quad
\mathrm{curl}_\zeta\bar{v}=-\bar{\omega}_\theta.
\end{equation}
Finally, the equivalence between \eqref{eq:IERaxi:1} and \eqref{eq:IERaxi:q:1} follows from the identity
$$\mathrm{div}_\zeta(r\bar{v}\otimes \bar{v})
=r\bar{v}\cdot\nabla_\zeta \bar{v}
=r\left(\frac{1}{2}\nabla_\zeta|\bar{v}|^2-\bar{\omega}_\theta \bar{v}^\perp\right),$$
where we have applied \eqref{eq:divcurlv}.
\end{proof}




\section{The subsolution}\label{sec:subsolution}

In this section we construct the axisymmetric without swirl subsolution \eqref{ansatz:omegaaxi}-\eqref{ansatz:p}. Firstly, we assume that the vorticity flux \eqref{ansatz:omegaaxi} is given by \eqref{ansatz:gamma}\eqref{ansatz:omegatheta}: $\bar{\omega}_\theta$ is supported in the shifted ball \eqref{eq:shiftedball}, which is parametrized by $\gamma$. The vorticity density must satisfy the compatibility conditions \eqref{compcond1} and \eqref{compcond2}. In the next lemma we show that there exists $\varpi(\rho)$ satisfying these conditions.

\begin{lemma}\label{lem:wprofileintegralconditions}
It is possible to select a vorticity profile $\varpi\in C(\R^+)$, supported on $(0,1)$, and satisfying \eqref{compcond1} and \eqref{compcond2}.
\end{lemma}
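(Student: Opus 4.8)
The plan is to reduce the statement to a $2\times2$ linear system. Conditions \eqref{compcond1} and \eqref{compcond2} are two linear constraints on $\varpi$: writing $M_1(\varpi):=\int_0^1\varpi(\rho)\rho\dif\rho$ and $M_3(\varpi):=\int_0^1\varpi(\rho)\rho^3\dif\rho$, they demand $M_1(\varpi)=\Gamma/(2\pi)$ and $M_3(\varpi)=0$. Since we have infinitely many degrees of freedom in choosing $\varpi$ and only two scalar constraints, solvability is clear in principle; the only point to check is that the functionals $M_1$ and $M_3$ are linearly independent on $C_c((0,1))$, which holds because $\rho$ and $\rho^3$ are linearly independent functions on $(0,1)$.

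Concretely, first I would fix two nonnegative continuous bump functions $\phi_1,\phi_2\in C(\R^+)$ with disjoint supports contained in $(0,1)$, say $\phi_j$ concentrated near $\rho=j/3$ with $\int_0^1\phi_j\dif\rho=1$, and look for $\varpi$ in the two-dimensional family $\varpi=A\phi_1+B\phi_2$. Substituting into the two constraints yields
\begin{equation*}
\begin{pmatrix}
M_1(\phi_1) & M_1(\phi_2)\\
M_3(\phi_1) & M_3(\phi_2)
\end{pmatrix}
\begin{pmatrix}A\\ B\end{pmatrix}
=\begin{pmatrix}\Gamma/(2\pi)\\ 0\end{pmatrix}.
\end{equation*}
Next I would verify that the coefficient matrix is invertible: its determinant is $M_1(\phi_1)M_3(\phi_2)-M_1(\phi_2)M_3(\phi_1)$, and if it vanished we would have $M_3(\phi_1)/M_1(\phi_1)=M_3(\phi_2)/M_1(\phi_2)$, i.e.\ the $\phi_j$-weighted averages of $\rho^2$ would coincide. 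Choosing the bumps narrow enough, these averages are as close as we wish to $1/9$ and $4/9$, which are distinct, so the determinant is nonzero. With the matrix invertible, $A$ and $B$ are uniquely determined, and $\varpi=A\phi_1+B\phi_2$ is continuous, supported in $(0,1)$, and satisfies \eqref{compcond1}--\eqref{compcond2}.

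There is no serious obstacle; the statement is essentially a linear-algebra exercise, and the only mildly delicate point is the nonsingularity of the $2\times2$ moment matrix, handled by localizing the two bumps near distinct points so that the effective values of $\rho^2$ differ. (Alternatively one avoids even that computation by invoking that two linearly independent linear functionals on an infinite-dimensional space have surjective joint range.) It may be worth recording for later use that $\varpi$ can be taken as smooth as desired and that it is necessarily sign-changing: if $\varpi\geq0$ then \eqref{compcond2} forces $\varpi\equiv0$, contradicting \eqref{compcond1}.
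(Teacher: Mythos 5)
Your proof is correct. The paper, like you, reduces to a two-parameter family subject to the two scalar constraints, but it simply posits the explicit ansatz $\varpi(\rho)=\Gamma c_1\rho(1-\rho)\big(1-c_2\log(e\rho)\big)$ on $[0,1]$ (extended by zero) and asserts that $c_1,c_2$ can be ``adjusted'', without verifying solvability; note that this ansatz is \emph{not} linear in $(c_1,c_2)$, so one would solve \eqref{compcond2} for $c_2$ first and then \eqref{compcond1} for $c_1$, and the nondegeneracy of the relevant moments is left implicit. Your version works with a genuinely linear two-dimensional family spanned by two disjointly supported bumps, and you actually check that the $2\times2$ moment matrix is invertible by localizing the bumps near distinct radii so that the weighted averages of $\rho^2$ differ; this is cleaner and fills in the small gap the paper glosses over. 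Your closing observation that any admissible $\varpi$ must change sign (since $\varpi\ge 0$ together with $\int_0^1\varpi\rho^3\dif\rho=0$ forces $\varpi\equiv 0$, contradicting $\Gamma\neq 0$) is also correct and a useful structural remark.
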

\begin{proof}
For instance, we can make the ansatz for $0\leq\rho\leq 1$
$$\varpi(\rho)=\Gamma c_1\rho(1-\rho)\big(1-c_2\log(e\rho)\big),$$
and adjust the parameters $c_1,c_2\in \R$. Then, we extend $\varpi(\rho)=0$ outside the interval $[0,1]$. 
\end{proof}

\begin{Rem}
The particular choice of $\varpi(\rho)$ in the previous lemma is not important in this work. In fact, we will show in Section \ref{sec:energy} that the leading term in the energy does not depend on $\varpi(\rho)$. We will make all computations for any $\varpi(\rho)$ that satisfies Lemma \ref{lem:wprofileintegralconditions}.
\end{Rem}

\subsection{The velocity}\label{sec:velocity}

In this section we give the velocity by means of the axisymmetric without swirl Biot-Savart law.

\begin{lemma}[Axisymmetric without swirl Biot-Savart law]\label{lemma:BSlaw} It holds
$$\bar{v}(t,\zeta)
=\int_{\mathbb{H}}
K^{\mathrm{ax}}(\zeta,\zeta')
\bar{\omega}_\theta(t,\zeta')\dif\zeta',$$
for $\zeta=(r,z)\in\mathbb{H}$, where
$$K^{\text{ax}}(\zeta,\zeta')
=\frac{i}{2\pi r}
\left(\sqrt{\frac{r'}{r}}H(s)-\bar{\zeta} G(s)\right),$$
and
\begin{align*}
H(s)=\int_0^\pi
\frac{1-\cos\varphi}{(2(1-\cos\varphi)+s)^{\nicefrac{3}{2}}}\dif\varphi,\\
G(s)=\int_0^\pi
\frac{\cos\varphi}{(2(1-\cos\varphi)+s)^{\nicefrac{3}{2}}}\dif\varphi,
\end{align*}
for the variables
$$\bar{\zeta}=\frac{\zeta-\zeta'}{\sqrt{rr'}},
\quad\quad
s=|\bar{\zeta}|^2.$$
\end{lemma}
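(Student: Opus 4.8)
The plan is to derive the axisymmetric Biot--Savart kernel directly from the 3D formula \eqref{ansatz:v} by integrating out the azimuthal variable. First I would write $\bar\omega(t,x')=\bar\omega_\theta(t,r',z')\,e_\theta(\theta')$ and insert this into
$$
\bar v(t,x)=-\frac{1}{4\pi}\int_{\R^3}\frac{(x-x')\times\bar\omega(t,x')}{|x-x'|^3}\dif x',
$$
using $\dif x'=r'\dif r'\dif\theta'\dif z'$. By the axisymmetry established in \eqref{ansatz:v} it suffices to evaluate $\bar v$ at a point with $\theta=0$, i.e.\ $x=(r,0,z)$, so that $e_r(0)=(1,0,0)$ and the answer can be read off as $\bar v_r e_r+\bar v_z e_z$. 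With $x-x'=(r-r'\cos\theta',-r'\sin\theta',z-z')$ and $e_\theta(\theta')=(-\sin\theta',\cos\theta',0)$, I would compute the cross product $(x-x')\times e_\theta(\theta')$ componentwise; its $e_\theta(0)$-component must vanish after integration (no swirl), and the surviving radial and vertical components produce, respectively, a $\cos\theta'$-weighted and a constant-weighted integrand.

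The second step is to reduce the denominator. One has $|x-x'|^2=r^2+r'^2-2rr'\cos\theta'+(z-z')^2$. Writing $\varphi=\theta'$ and factoring out $rr'$, this becomes $rr'\bigl(2(1-\cos\varphi)+s\bigr)$ with $s=\frac{(r-r')^2+(z-z')^2}{rr'}=|\bar\zeta|^2$, exactly matching the definition of $s$ in the statement. Hence $|x-x'|^3=(rr')^{3/2}\bigl(2(1-\cos\varphi)+s\bigr)^{3/2}$, and the $\varphi$-integrals that emerge are precisely $\int_0^{2\pi}$ of $\cos\varphi$ and of $1$ against $(2(1-\cos\varphi)+s)^{-3/2}$; using evenness in $\varphi$ and the identity $r^2+r'^2-2rr'\cos\varphi = rr'(2(1-\cos\varphi)+s)$ to re-express the numerator's $r-r'\cos\varphi$ as a combination of a $(1-\cos\varphi)$-term and a constant term, these collapse to $H(s)$ and $G(s)$ as defined. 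Assembling the pieces and bookkeeping the factors of $\sqrt{r'/r}$, $1/r$, and the complex combination $\bar\zeta = (\zeta-\zeta')/\sqrt{rr'}$, one recognizes the claimed kernel $K^{\mathrm{ax}}(\zeta,\zeta')=\frac{i}{2\pi r}\bigl(\sqrt{r'/r}\,H(s)-\bar\zeta\,G(s)\bigr)$, where the factor $i$ encodes the identification of the pair $(\bar v_r,\bar v_z)$ with a complex number and the $\perp$-rotation coming from $e_\theta\mapsto$ (radial, vertical).

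The main obstacle, and the step requiring the most care, is the algebraic bookkeeping that turns the two real scalar integrals into the single complex expression $\sqrt{r'/r}\,H(s)-\bar\zeta\,G(s)$: one must correctly split the radial-component numerator $r-r'\cos\varphi$ (and the vertical-component numerator $z-z'$, which carries no $\varphi$-dependence) into the $H$- and $G$-pieces and verify that the $z-z'$ contribution is exactly the imaginary part of $-\bar\zeta\,G(s)$ while the $r-r'$ part distributes correctly between $\sqrt{r'/r}\,H(s)$ and $-\operatorname{Re}(\bar\zeta)\,G(s)$, with all prefactors of $2\pi$, $4\pi$, and powers of $r,r'$ tracked consistently. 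A secondary point worth a remark is convergence: near $\varphi=0$ one has $2(1-\cos\varphi)+s\sim\varphi^2+s$, so for $s>0$ (i.e.\ $\zeta\neq\zeta'$) both $H(s)$ and $G(s)$ are finite, and the mild logarithmic blow-up of $H$ as $s\to0$ is integrable against the bounded vorticity flux on the bounded support, so the representation is well defined for $\zeta\in\mathbb H$; this is the only analytic (as opposed to purely computational) point, and it is routine. I would then simply state the final kernel, the computation having been reduced to the two named integrals.
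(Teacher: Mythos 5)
Your derivation follows the same route as the paper's proof in Appendix~\ref{sec:ABS}: insert the axisymmetric ansatz into the 3D Biot--Savart law, integrate out the azimuthal angle, factor $rr'$ out of $|x-x'|^2$ to obtain $2(1-\cos\varphi)+s$, and split the vertical-component numerator into $\cos\varphi$- and $(1-\cos\varphi)$-weighted pieces to recognize $G$ and $H$. The only slips are algebraic forecasts in the step you flag as delicate: the $e_z$-component of $(x-x')\times e_{\theta'}$ at $\theta=0$ is $r\cos\varphi-r'$, not $r-r'\cos\varphi$, so the correct split is $(r-r')\cos\varphi-r'(1-\cos\varphi)$, which gives $G$ and $H$ directly rather than a constant-weighted integral plus a $(1-\cos\varphi)$-piece.
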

\begin{proof} See Section \ref{sec:ABS}.
\end{proof}

Next, we derive asymptotic formulas for the Biot-Savart operator evaluated on $\gamma$
$$\bar{v}(t,\gamma)
=\int_0^{2\pi}\int_0^1K^{\text{ax}}(\gamma,\gamma')\varpi(\rho')\rho'\dif\rho'\dif\alpha',$$
where we have abbreviated $\gamma=\gamma(t,\rho,\alpha)$ and $\gamma'=\gamma(t,\rho',\alpha')$.

To obtain the leading order terms, we will assume that $c\ll 1$. Our $c$ will be continuous and null at $t=0$, so this condition is satisfied for short times.

\begin{prop}\label{prop:Birkhoff-Rott}
It holds
$$\bar{v}(t,\gamma)=
-\frac{i\rho\Gamma_\rho}{2\pi c}e^{i(\alpha+a)}
-\frac{i\Gamma}{4\pi L}\log c
+O(1),$$
where $\Gamma_\rho$ is defined in equation \eqref{eq:Gamma_rhodefinition}.
\end{prop}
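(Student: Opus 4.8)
The goal is to extract the two leading-order terms of the Biot--Savart integral
$$
\bar v(t,\gamma)=\int_0^{2\pi}\!\!\int_0^1 K^{\mathrm{ax}}(\gamma,\gamma')\,\varpi(\rho')\rho'\dif\rho'\dif\alpha',
$$
as $c\to 0$, where $K^{\mathrm{ax}}(\zeta,\zeta')=\frac{i}{2\pi r}\bigl(\sqrt{r'/r}\,H(s)-\bar\zeta\,G(s)\bigr)$ and $\gamma=L+ih(t)+c\rho e^{i(\alpha+a)}$. The plan is to identify which part of the kernel is responsible for the singular $1/c$ and $\log c$ contributions, and to treat the remainder as an $O(1)$ error using the fact that $r=\mathrm{Re}\,\gamma\to L$, $r'\to L$ uniformly, together with the small-$s$ asymptotics of the elliptic-type integrals $H$ and $G$.

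\textbf{Key steps.} First I would record the local structure of the kernel near the diagonal: since $\gamma-\gamma'=c(\rho e^{i(\alpha+a)}-\rho' e^{i(\alpha'+a)})$ we have $\bar\zeta=(\gamma-\gamma')/\sqrt{rr'}=\frac{c}{L}(\rho e^{i(\alpha+a)}-\rho'e^{i(\alpha'+a)})+O(c^2)$ and $s=|\bar\zeta|^2=O(c^2)$, so $s$ is small and one needs the behaviour of $H(s),G(s)$ as $s\to 0^+$. The standard expansions (which I would state as a lemma, or cite from \cite{FengSverak15}) are $H(s)=-\tfrac12\log s+O(1)$ and $G(s)=\tfrac{1}{s}+O(\log s)$ — the logarithmic divergence of $H$ produces, after integrating $-\tfrac{1}{4\pi L}\log(c^2/L^2)=-\tfrac{1}{2\pi L}\log c+O(1)$ against $\varpi(\rho')\rho'$ with total mass $\Gamma/2\pi$ by \eqref{compcond1}, the term $-\frac{i\Gamma}{4\pi L}\log c$; meanwhile the $1/s$ pole of $G$, multiplied by the factor $-\bar\zeta$ in the kernel, gives the local Birkhoff--Rott-type kernel $-\bar\zeta/s=-1/\bar\zeta^*\sim -\frac{L}{c}\,\overline{(\rho e^{i(\alpha+a)}-\rho'e^{i(\alpha'+a)})}^{-1}$, and integrating this over the disk reproduces, exactly as in the 2D computation of \cite[Lemma 4.1]{Mengualpp}, the rigid-rotation field $-\frac{i\rho\Gamma_\rho}{2\pi c}e^{i(\alpha+a)}$ with $\Gamma_\rho$ as in \eqref{eq:Gamma_rhodefinition}. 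Second, I would bound the error: the difference between $H(s),G(s)$ and their leading singular parts is $O(1)$ (resp. $O(\log s)$, which is integrable against $\rho'\dif\rho'\dif\alpha'$), the difference between $r,r'$ and $L$ contributes relative corrections of size $O(c)$, and the $\sqrt{r'/r}\,H(s)$ term contributes only through its $\log$, already accounted for; hence all remaining contributions are $O(1)$ uniformly in $(\rho,\alpha)$, $\rho\in[0,1]$.

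\textbf{Main obstacle.} The delicate point is the computation of the $1/c$ term: one must show that the two-dimensional singular integral $\int_{B_1}\overline{(\rho e^{i(\alpha+a)}-\rho'e^{i(\alpha'+a)})}^{-1}\varpi(\rho')\rho'\dif\rho'\dif\alpha'$ (a principal value) evaluates to a pure rotation $\propto \rho e^{i(\alpha+a)}$, i.e. that the radial component of the self-induced velocity vanishes and only the angular component survives, with the precise constant $\Gamma_\rho$. This is where the structure of the problem — the complex half-plane identification and the Cauchy-kernel form $-1/\bar\zeta^*$ — must be used carefully, and where one genuinely invokes the planar calculation from \cite{Mengualpp}; the change of variables $e^{i(\alpha'+a)}\mapsto \zeta'$ is an isometry since $a=a(t,\rho)$ is independent of $\alpha$, so the rotation angle $a$ factors out cleanly. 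A secondary but routine nuisance is keeping track of which error terms are $O(c\log c)$ versus $O(1)$ and confirming they are all absorbed; I expect no real difficulty there once the $H,G$ asymptotics are in hand.
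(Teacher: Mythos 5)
Your overall route coincides with the paper's: decompose $\bar v$ into the $H$-part (vertical displacement) and the $G$-part (internal rotation), extract the small-$s$ asymptotics of $H$ and $G$, and reduce the $G$-part to a Cauchy-kernel computation on the disk. But there is a concrete error in your asymptotic for $H$: you write $H(s)=-\tfrac12\log s+O(1)$, whereas the correct expansion is $H(s)=-\tfrac14\log s+O(1)$. The missing factor $\tfrac12$ comes from the Taylor expansion $1-\cos\varphi=\tfrac{\varphi^2}{2}+O(\varphi^4)$ in the numerator, giving
\[
H(s)\approx\tfrac12\int_0^\pi\frac{\varphi^2}{(\varphi^2+s)^{3/2}}\dif\varphi
=\tfrac12\bigl(-\tfrac12\log s\bigr)+O(1)=-\tfrac14\log s+O(1).
\]
Propagated consistently, your $-\tfrac12\log s$ would produce $-\frac{i\Gamma}{2\pi L}\log c$ for the vertical term, twice the claimed $-\frac{i\Gamma}{4\pi L}\log c$, and would contradict the Kelvin--Saffman coefficient $\frac{\Gamma}{4\pi L}$. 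Your intermediate step ``$-\tfrac{1}{4\pi L}\log(c^2/L^2)$ against total mass $\Gamma/2\pi$'' only appears to land on the right answer because the $2\pi$ from $\int\dif\alpha'$ and the $1/(2\pi r)$ prefactor are not tracked consistently with the $H$-constant you quoted.

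A second gap is the treatment of the $\sqrt{rr'}/r$ correction in the $G$-part. You dismiss it as ``a relative correction of size $O(c)$'' acting on the $O(1/c)$ leading term. But the $\bar\alpha$-integral of $1/(\rho-\rho'e^{-i\bar\alpha})$ is controlled only by the residue computation; a raw $L^\infty$ bound on the integrand diverges at $\rho=\rho'$, $\bar\alpha=0$. To obtain a genuine $O(1)$ bound one must exhibit the cancellation $\sqrt{rr'}-r=-c\,\frac{\rho\cos(\alpha+a)-\rho'\cos(\alpha'+a')}{\sqrt r+\sqrt{r'}}$ and observe that this numerator vanishes on the singular set of the Cauchy kernel, so the full integrand is pointwise $O(1)$; this is what the paper's estimate of the term $I$ does. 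Relatedly, the claim that ``the rotation angle $a$ factors out cleanly since $a$ is independent of $\alpha$'' glosses over $a(\rho)\neq a(\rho')$: what works is reparametrizing the full-period $\alpha'$-integral by $\bar\alpha=\alpha'+a(\rho')-\alpha-a(\rho)$, exactly as in the paper.
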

\begin{proof}
We follow the notation of Lemma \ref{lemma:BSlaw}. We decompose
$$\bar{v}
=\bar{v}_{\circlearrowright}
+\bar{v}_{\uparrow},$$
where
\begin{align*}
\bar{v}_{\circlearrowright}
&=-\frac{i}{2\pi r}
\int_{\mathbb{H}}
\bar{\zeta} G(s)\bar{\omega}_\theta\dif\zeta',\\
\bar{v}_{\uparrow}
&=\frac{i}{2\pi r}\int_{\mathbb{H}}
\sqrt{\frac{r'}{r}}
H\bar{\omega}_\theta\dif\zeta'.
\end{align*} 
Firstly, we compute $\bar{v}_{\circlearrowright}$. Notice that we can write
$$\bar{\zeta}=c\frac{\rho-\rho' e^{i\bar{\alpha}}}{\sqrt{rr'}}e^{i(\alpha+a)},
\quad\quad
s=c^2
\frac{|\rho-\rho' e^{i\bar{\alpha}}|^2}{rr'},$$
where $\bar{\alpha}(t,\alpha,\alpha',\rho,\rho')=\alpha'+a(t,\rho')-\alpha-a(t,\rho)$.
From Lemma \ref{lemma:Gestimates}, we know that
$$G(s)
=\frac{1}{s}
+O(\log s),$$
as $s\to 0$.
Hence,
\begin{align*}
\bar{v}_{\circlearrowright}
&=-\frac{i}{2\pi r}\int_0^1\int_0^{2\pi}
\bar{\zeta} G(s)\dif\alpha'\varpi(\rho')\rho'\dif\rho'\\
&=-\frac{i}{2\pi r}
\int_0^1\int_0^{2\pi}
\bar{\zeta}\left(\frac{1}{s}+O(\log s)\right)\dif\alpha'\varpi(\rho')\rho'\dif\rho'\\
&=-\frac{i}{2\pi r}
\int_0^1\int_0^{2\pi}
\frac{1}{\bar{\zeta}^*}\dif\alpha'\varpi(\rho')\rho'\dif\rho'
+\frac{i}{2\pi r}
\int_0^1\int_0^{2\pi}
\bar{\zeta} O(\log s)\dif\alpha'\varpi(\rho')\rho'\dif\rho'.
\end{align*}
Since $|\bar{\zeta}|=\sqrt{s}$, the integrand in the second term is bounded, so the result is $O(c\log c)=O(1)$. The first term can be computed ($\rho'=\lambda\rho$)
\begin{align*}
-\frac{i}{2\pi r}
\int_0^1\int_0^{2\pi}
\frac{\dif\alpha'}{\bar{\zeta}^*}\varpi(\rho')\rho'\dif\rho'
=&-\frac{i}{2\pi c}e^{i(\alpha+a)}
\int_0^1\int_0^{2\pi}
\frac{\dif\bar{\alpha}}{\rho-\rho'e^{-i\bar{\alpha}}}\varpi(\rho')\rho'\dif\rho'\\
&-\frac{i}{2\pi rc}e^{i(\alpha+a)}
\int_0^1\int_0^{2\pi}
\frac{(\sqrt{rr'}-r)\dif\bar{\alpha}}{\rho-\rho'e^{-i\bar{\alpha}}}\varpi(\rho')\rho'\dif\rho'
&&=:I\\
=&-\frac{i}{c}e^{i(\alpha+a)}
\int_0^{\rho}\varpi(\rho')
\frac{\rho'}{\rho}\dif\rho'
+O(1)\\
=&-\frac{i}{c}e^{i(\alpha+a)}
\int_0^{1}\varpi(\rho\lambda)
\rho\lambda\dif\lambda
+O(1),
\end{align*}
where we have applied
$$\frac{1}{2\pi}\int_0^{2\pi}
\frac{\dif\bar{\alpha}}{\rho-\rho'e^{-i\bar{\alpha}}}
=\frac{1}{2\pi}\int_0^{2\pi}
\frac{e^{i\bar{\alpha}}\dif\bar{\alpha}}{\rho e^{i\bar{\alpha}}-\rho'}
=\frac{1}{2\pi\rho i}
\int_{\partial B_\rho}
\frac{\dif z}{z-\rho'}
=\frac{1}{\rho}\mathbf{1}_{\rho>\rho'},$$
and
$$
\begin{aligned}
|I|
&\lesssim 
\left|\frac{1}{\sqrt{r}}
\int_0^1\int_0^{2\pi}
\frac{\rho \cos(a+\alpha)-\rho'\cos(a+\alpha')}{(\rho-\rho'e^{-i\bar{\alpha}})(\sqrt{r'}+\sqrt{r})}\varpi(\rho')\rho'\dif\bar{\alpha}\dif\rho'\right|\\
&\lesssim\left|
\int_0^1\int_0^{2\pi}
O(1)\varpi(\rho')\rho'\dif\bar{\alpha}\dif\rho'\right|= O(1).
\end{aligned}
$$
Therefore,
$$\bar{v}_{\circlearrowright}
=-\frac{i\rho\Gamma_\rho}{2\pi c}e^{i(\alpha+a)}
+O(1).$$
Secondly, we compute $\bar{v}_{\uparrow}$. From Lemma \ref{lemma:Hestimates}, we know that
$$
H(s)=-\frac{1}{4}\log s+O(1),
$$
as $s\to 0$. Hence,
\begin{align*}
\bar{v}_{\uparrow}
&=\frac{i}{2\pi r}\int_0^1\int_0^{2\pi}
\sqrt{\frac{r'}{r}}
H\dif\alpha'\varpi(\rho')\rho'\dif\rho'\\
&=\frac{i}{2\pi r}\int_0^1\int_0^{2\pi}
\sqrt{\frac{r'}{r}}
\left(-\frac{1}{4}\log s+O(1)\right)\dif\alpha'\varpi(\rho')\rho'\dif\rho'\\
&=-\frac{i}{8\pi r}\int_0^1\int_0^{2\pi}
\sqrt{\frac{r'}{r}}
\log s\dif\alpha'\varpi(\rho')\rho'\dif\rho'+O(1).
\end{align*}
We further compute
\begin{align*}
\bar{v}_{\uparrow}
=&-\frac{i}{8\pi r}\int_0^1\int_0^{2\pi}
\sqrt{\frac{r'}{r}} \left(2\log c+\log\left(\frac{|\rho-\rho'e^{i\bar{\alpha}}|^2}{rr'}\right)\right)\dif\alpha'\varpi(\rho')\rho'\dif\rho'+O(1)\\
=&-\frac{i}{4\pi r}\int_0^1\int_0^{2\pi}
\sqrt{\frac{r'}{r}}\log c\dif\alpha'\varpi(\rho')\rho'\dif\rho'+O(1)\\
&+\frac{i}{2\pi r}\int_0^1\int_0^{2\pi}\sqrt{\frac{r'}{r}}\log |\rho-\rho'e^{i\bar{\alpha}}|\dif\alpha'\varpi(\rho')\rho'\dif\rho'+O(1)
\\=&-\frac{i\Gamma}{4\pi L}\log c+O(1)\int_{B(0,2)}|\log{|x|}|dx+O(1)=-\frac{i\Gamma}{4\pi L}\log c+O(1),
\end{align*}
where we have used that $r=L+O(c)$.
\end{proof}


\subsection{The pressure}\label{sec:pressure}

In this section we give the pressure by means of the Bernoulli law. 

\begin{lemma}[The Bernoulli law]
There exists $\bar{q}_0$ satisfying
$$\partial_t \bar{v}+\nabla \bar{q}_0
=\bar{\omega}_\theta\partial_t\gamma^\perp.$$
\end{lemma}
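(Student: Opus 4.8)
My plan is to reduce the statement to a Poincaré-lemma argument on the simply connected domain $\mathbb{H}$. Concretely, I would show that the vector field
$$
F(t,\cdot):=\bar\omega_\theta\,\partial_t\gamma^\perp-\partial_t\bar v
$$
is curl-free on $\mathbb{H}$, so that it admits a primitive $\bar q_0$ with $\nabla\bar q_0=F$ — which is exactly the asserted identity (up to an additive function of $t$, which is irrelevant). Here I read $\partial_t\gamma$ as the time-dependent velocity field on the ring generating the flow $\gamma(t,\cdot)$: since by \eqref{ansatz:gamma} the map $\gamma(t,\cdot)$ is a diffeomorphism onto the shifted ball \eqref{eq:shiftedball} with Jacobian $\rho c(t)^2$, I set $w(t,\gamma(t,\rho,\alpha)):=\partial_t\gamma(t,\rho,\alpha)$ there, and use that $\bar\omega_\theta$ vanishes outside the ring, so that $F$ is globally defined on $\mathbb{H}$ and continuous (because $\varpi$ is supported in $(0,1)$, hence $\bar\omega_\theta$ vanishes on $\partial(L+ih+B_c)$).

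First I would use \eqref{eq:divcurlv} to get $\mathrm{curl}_\zeta(\partial_t\bar v)=\partial_t\,\mathrm{curl}_\zeta\bar v=-\partial_t\bar\omega_\theta$. Next, recalling the conventions $\zeta^\perp=i\zeta$ and $\mathrm{curl}_\zeta V=\partial_r V_z-\partial_z V_r$, a one-line computation gives $\mathrm{curl}_\zeta(\bar\omega_\theta\,\partial_t\gamma^\perp)=\mathrm{div}_\zeta(\bar\omega_\theta\,\partial_t\gamma)$, whence
$$
\mathrm{curl}_\zeta F=\partial_t\bar\omega_\theta+\mathrm{div}_\zeta(\bar\omega_\theta\,\partial_t\gamma).
$$
The key point is then that $\bar\omega_\theta$ solves this continuity equation along $\gamma$, and this holds \emph{by construction}: by \eqref{ansatz:omegatheta} and the Jacobian formula,
$$
\bar\omega_\theta(t,\gamma(t,\rho,\alpha))\,\det\nabla_{(\rho,\alpha)}\gamma(t,\rho,\alpha)=\frac{\varpi(\rho)}{c(t)^2}\,\rho\,c(t)^2=\varpi(\rho)\rho
$$
is independent of $t$; differentiating in $t$ and invoking the Jacobian identity $\partial_t\det\nabla\gamma=(\det\nabla\gamma)\,(\mathrm{div}_\zeta w)\circ\gamma$ yields, after dividing by $\det\nabla\gamma$, precisely $\partial_t\bar\omega_\theta+\mathrm{div}_\zeta(\bar\omega_\theta\,\partial_t\gamma)=0$ inside the ring, while outside $\bar\omega_\theta\equiv 0$ makes it trivial. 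Since $F$ is continuous across the ring boundary, its distributional curl carries no singular part there, so $\mathrm{curl}_\zeta F=0$ on all of $\mathbb{H}$; as $\mathbb{H}$ is simply connected, integrating $F$ along paths (equivalently, mollifying and passing to the limit) produces the desired $\bar q_0\in C^1$.

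The only non-bookkeeping issue is the regularity needed to run this argument: one must know that $\bar v$ is $C^1$ in $(t,\zeta)$, with $\partial_t$ and $\partial_\zeta$ commuting, so that $\partial_t\,\mathrm{curl}_\zeta\bar v$ makes sense. This follows by differentiating the Biot–Savart representation of Lemma \ref{lemma:BSlaw} after the substitution $\zeta'=\gamma(t,\rho',\alpha')$, the only delicate step being differentiation under the weakly singular kernel when $\zeta$ lies inside the ring, which is handled exactly as in the asymptotic analysis of Proposition \ref{prop:Birkhoff-Rott}. I do not expect a genuine obstacle beyond this: conceptually the lemma merely records that, because $\mathrm{curl}_\zeta\bar v=-\bar\omega_\theta$, the equation $\partial_t\bar v+\nabla\bar q_0=\bar\omega_\theta\,\partial_t\gamma^\perp$ is equivalent to the transport of the prescribed vorticity along the prescribed filament $\gamma$.
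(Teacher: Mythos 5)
Your proposal is correct and takes a genuinely different route from the paper. The paper splits $\partial_t\bar v$ by introducing an auxiliary two-dimensional field $\bar v^{2d}$ whose curl has the opposite sign to that of $\bar v^{\text{ax}}$, so that $\bar v^{\text{ax}}+\bar v^{2d}$ is curl-free on $\mathbb{H}$ (giving a gradient $\nabla\tilde q$), and then writes down an \emph{explicit} formula $\bar q_{0,0}(t,\zeta)=-\int K^{2d}(\zeta-\gamma)\cdot\partial_t\gamma\,\varpi\rho\,\mathrm d\rho\,\mathrm d\alpha$ and verifies $\partial_t\bar v^{2d}+\bar\omega_\theta\partial_t\gamma^\perp=\nabla\bar q_{0,0}$ distributionally by integrating by parts on shrinking cylinders $\partial B_\varepsilon(\gamma)$. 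You bypass the splitting and the explicit formula entirely: you feed $F=\bar\omega_\theta\partial_t\gamma^\perp-\partial_t\bar v$ directly into the Poincar\'e lemma, observing via $\mathrm{curl}_\zeta\bar v=-\bar\omega_\theta$ and $\mathrm{curl}_\zeta(gW^\perp)=\mathrm{div}_\zeta(gW)$ that $\mathrm{curl}_\zeta F=\partial_t\bar\omega_\theta+\mathrm{div}_\zeta(\bar\omega_\theta w)$, and that this vanishes because the ansatz $\bar\omega_\theta\circ\gamma\cdot\det D\gamma=\varpi\rho$ is the Lagrangian form of the continuity equation along the prescribed flow $\gamma$. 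Your argument is cleaner and exposes the mechanism (the lemma is exactly the statement that the prescribed vorticity is transported by $\partial_t\gamma$), whereas the paper's argument produces an explicit potential and is naturally distributional, which lets it sidestep the joint regularity you would need to justify commuting $\partial_t$ with $\mathrm{curl}_\zeta$ and differentiating the weakly singular Biot--Savart integral; note that Proposition~\ref{prop:Birkhoff-Rott} controls the \emph{size} of $\bar v$, not its $C^1_{t,\zeta}$ smoothness, so that step would need a separate verification (e.g.\ $\bar\omega_\theta\in C^\alpha$ in space for $\varpi$ as in Lemma~\ref{lem:wprofileintegralconditions}, plus smoothness in $t$ via the parametrization $\gamma$). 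Also note that the vanishing of $\varpi$ at $\rho\in\{0,1\}$, which you invoke to make $F$ continuous across $\partial(L+ih+B_c)$ and hence to avoid a singular layer in the distributional curl, is indeed guaranteed by the construction in Lemma~\ref{lem:wprofileintegralconditions}. This is care rather than a real obstruction, but it is worth recording.
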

\begin{proof}
Let us denote $\bar{v}^{\text{ax}}=\bar{v}$, and define
$$
\bar{v}^{2d}(t,\zeta)
=\int_{\R^2}K^{2d}(\zeta-\zeta')\bar{\omega}_\theta(t,\zeta')\dif\zeta',
$$
where
$$
K^{2d}(\zeta)=\frac{i}{2\pi\zeta^*}
$$
is the $2d$ Biot-Savart kernel.
Since 
$$
\mathrm{curl_\zeta}
(\bar{v}^{2d})
=\bar{\omega}_\theta
=-\mathrm{curl_\zeta}
(\bar{v}^{\text{ax}})$$ 
in the simply-connected domain $\mathbb{H}$, there exists $\tilde{q}$ satisfying
$$
\partial_t(\bar{v}^{\text{ax}}+\bar{v}^{2d})
=\nabla\tilde{q}.$$
We claim that 
$$\bar{q}_{0,0}(t,\zeta)
=-\int_0^{2\pi}\int_0^1
K^{2d}(\zeta-\gamma)\cdot \partial_t\gamma\varpi\rho\dif\rho\dif\alpha,$$
where $\gamma=\gamma(t,\rho,\alpha)$
satisfies
\begin{equation}\label{claim:pressure}
\partial_t\bar{v}^{2d}
+\bar{\omega}_\theta\partial_t\gamma^\perp
=\nabla\bar{q}_{0,0}.
\end{equation}
Then, the statement holds for 
$$
\bar{q}_0=\bar{q}_{0,0}-\tilde{q}.
$$
Let $\phi\in C_c^\infty((0,T)\times\mathbb{H})$ be a test function. On the one hand,
\begin{align*}
\langle\partial_t\bar{v}^{2d},\phi\rangle
&=-\langle\bar{v}^{2d},\partial_t\phi\rangle
=-\int_0^T\int_{\mathbb{H}}\bar{v}^{2d}\partial_t\phi\dif\zeta\dif t\\
&=-\int_0^T\int_{\mathbb{H}}\int_0^{2\pi}\int_0^1K^{2d}(\zeta-\gamma)\varpi\rho\dif\rho\dif\alpha
\partial_t\phi\dif\zeta\dif t\\
&=-\int_0^{2\pi}\int_0^1 \lim_{\varepsilon\to 0}\left(\int_0^T\int_{\mathbb{H}\setminus B_\varepsilon(\gamma)}K^{2d}(\zeta-\gamma)\partial_t\phi\dif\zeta\dif t\right)\varpi\rho\dif\rho\dif\alpha.
\end{align*}
We parameterize the (space-time) cylinder $\partial B_\varepsilon(\gamma)$ by
$$
X(t,\beta)
=(t,\gamma(t)+\varepsilon e^{i\beta}).
$$
The normal vector to the boundary pointing into the interior of the cylinder is given by
$$
n=\frac{\partial_t X\times\partial_\beta X}{|\partial_t X\times\partial_\beta X|},
\quad\quad
\partial_t X\times\partial_\beta X
=\varepsilon(\partial_t\gamma\cdot(e^{i\beta}),-e^{i\beta}).
$$
Therefore, an integration by parts yields
\begin{align}
\int_0^T\int_{\mathbb{H}\setminus B_\varepsilon(\gamma)}K^{2d}(\zeta-\gamma)\partial_t\phi\dif\zeta\dif t
=&\int_0^T\int_{\partial B_\varepsilon(\gamma)}K^{2d}(\zeta-\gamma)\phi n_t\dif\zeta\dif t\label{eq:vpartscyl1}\\
&-\int_0^T\int_{\mathbb{H}\setminus B_\varepsilon(\gamma)}\partial_t(K^{2d}(\zeta-\gamma))\phi\dif\beta\dif t.\label{eq:vpartscyl2}
\end{align}
The first term equals
\begin{align*}
\eqref{eq:vpartscyl1}
&=\int_0^T\int_0^{2\pi}\varepsilon K^{2d}(\varepsilon e^{i\beta})\phi(t,\gamma+\varepsilon e^{i\beta}) \partial_t\gamma\cdot(e^{i\beta})\dif\zeta\dif t.
\end{align*}
Since
$$
\varepsilon K^{2d}(\varepsilon e^{i\beta})
=\frac{ie^{i\beta}}{2\pi},
$$
we get
$$
\lim_{\varepsilon\to 0}
\eqref{eq:vpartscyl1}
=\frac{1}{2\pi}\int_0^T\phi(t,\gamma)\int_0^{2\pi}ie^{i\beta}\partial_t\gamma\cdot(e^{i\beta})\dif\beta\dif t
=\frac{1}{2}\int_0^T\phi(t,\gamma)\partial_t\gamma^\perp\dif t.
$$
For the second term \eqref{eq:vpartscyl2} we have
$$
\partial_t(K^{2d}(\zeta-\gamma))
=-D_{\zeta}K^{2d}(\zeta-\gamma)\partial_t\gamma.
$$
By plugging everything together we obtain
\begin{align*}
\langle\partial_t\bar{v}^{\text{2d}},\phi\rangle
=&-\int_0^{2\pi}\int_0^1 \lim_{\varepsilon\to 0}\left(\int_0^T\int_{\mathbb{H}\setminus B_\varepsilon(\gamma)}D_{\zeta}K^{\text{2d}}(\zeta-\gamma)\partial_t\gamma\phi\dif\zeta\dif t\right)\varpi\rho\dif\rho\dif\alpha\\
&-\frac{1}{2}\int_0^{2\pi}\int_0^1\left(\int_0^T\phi(t,\gamma)\partial_t\gamma^\perp\dif t\right)\varpi\rho\dif\rho\dif\alpha.
\end{align*}
On the other hand,
\begin{align*}
\langle \nabla \bar{q}_{0,0},\phi\rangle
&=-\langle \bar{q}_{0,0},\nabla\phi\rangle
=-\int_0^T\int_{\mathbb{H}} \bar{q}_{0,0}\nabla\phi\dif\zeta\dif t\\
&=\int_0^T\int_{\mathbb{H}}\int_0^{2\pi}\int_0^1 K^{2d}(\zeta-\gamma)\cdot \partial_t\gamma\varpi \rho\dif\rho\dif\alpha\nabla\phi\dif\zeta\dif t\\
&=\int_0^{2\pi}\int_0^1\lim_{\varepsilon\to 0}\left(\int_0^T\int_{\mathbb{H}\setminus B_\varepsilon(\gamma)} K^{2d}(\zeta-\gamma)\cdot\partial_t\gamma\nabla\phi\dif\zeta\dif t\right)\varpi\rho\dif\rho\dif\alpha.
\end{align*}
An integration by parts yields
\begin{align}
\int_0^T\int_{\mathbb{H}\setminus B_\varepsilon(\gamma)} K^{2d}(\zeta-\gamma)\cdot\partial_t\gamma\nabla\phi\dif\zeta\dif t
=&\int_0^T\int_{\partial B_\varepsilon(\gamma)} K^{2d}(\zeta-\gamma)\cdot\partial_t\gamma\phi n_\zeta\dif\zeta\dif t\label{eq:qpartscyl1}\\
&-\int_0^T\int_{\mathbb{H}\setminus B_\varepsilon(\gamma)} \nabla(K^{2d}(\zeta-\gamma)\cdot\partial_t\gamma)\phi\dif\zeta\dif t.\label{eq:qpartscyl2}
\end{align}
Since the first term equals
$$
\eqref{eq:qpartscyl1}
=\int_0^T\int_0^{2\pi} \varepsilon K^{2d}(\varepsilon e^{i\beta})\cdot\partial_t\gamma\phi(t,\gamma+\varepsilon e^{i\beta}) (-e^{i\beta})\dif\beta\dif t,
$$
we get
$$
\lim_{\varepsilon\to 0}\eqref{eq:qpartscyl1}
=-\frac{1}{2\pi}\int_0^T\phi(t,\gamma)\int_0^{2\pi}(ie^{i\beta})\cdot\partial_t\gamma e^{i\beta}\dif\beta\dif t
=\frac{1}{2}\int_0^T\phi(t,\gamma) \partial_t\gamma^\perp\dif t.
$$
For the second term \eqref{eq:qpartscyl2}, we have
$$
\nabla(K^{2d}\cdot\partial_t\gamma)
=(D_{\zeta}K^{2d})^\dagger\partial_t\gamma
=D_{\zeta}K^{2d}\partial_t\gamma
+(\mathrm{curl}_\zeta K^{2d})\partial_t\gamma^\perp.
$$
Since $\mathrm{curl}_\zeta K^{2d}=\delta_{\zeta'}$, we indeed have
$$
\nabla(K^{2d}\cdot\partial_t\gamma)
=D_{\zeta}K^{2d}\partial_t\gamma,
$$
for $\zeta\in\mathbb{H}\setminus B_\varepsilon(\gamma)$. 
By plugging everything together, we obtain
\begin{align*}
\langle \nabla \bar{q}_{0,0},\phi\rangle
=&-\int_0^{2\pi}\int_0^1
\lim_{\varepsilon\to 0}
\left(\int_0^T\int_{\mathbb{H}\setminus B_\varepsilon(\gamma)}D_{\zeta}K^{\text{2d}}(\zeta-\gamma)\partial_t\gamma\phi\dif\zeta\dif t\right)\varpi\rho\dif\rho\dif\alpha\\
&+\frac{1}{2}\int_0^{2\pi}\int_0^1\left(\int_0^T\phi(t,\gamma)\partial_t\gamma^\perp\right)\varpi\rho\dif\rho\dif\alpha.
\end{align*}
Finally, we get
$$
\langle \partial_t\bar{v}-\nabla\bar{q}_{0,0},\phi\rangle
=-\int_0^T\int_0^{2\pi}\int_0^1\phi(t,\gamma)\partial_t\gamma^\perp \varpi\rho\dif\rho\dif\alpha\dif t,
$$
that is, our claim \eqref{claim:pressure}.
\end{proof}

Finally, we take the Bernoulli pressure as
$$\bar{q}
=\bar{q}_0+\bar{q}_1,$$
where $\bar{q}_1$ is a corrector introduced to guarantee several compatibility conditions that appear in the next section. We take $\bar{q}_1$ smooth and with the same support as $\bar{\omega}$.
Therefore,
\begin{equation}\label{eq:Bernoulliq1}
\partial_t \bar{v}+\nabla \bar{q}
=\bar{\omega}_\theta\partial_t\gamma^\perp 
+\nabla \bar{q}_1.
\end{equation}

\subsection{The Reynolds stress}\label{sec:Reynolds}

In this section we give the Reynolds stress by means of the Isett-Oh antidivergence operator \cite{IsettOh16}. In view of \eqref{eq:Bernoulliq1}, 
the Euler-Reynolds equation \eqref{eq:IERaxi:q} reads as
\begin{equation}\label{eq:Reynolds}
-\mathrm{div}(rR)
=F,
\end{equation}
where we have abbreviated
\begin{equation}\label{eq:F}
\begin{split}
F
&=\partial_t(r\bar{v})
+\mathrm{div}(r\bar{v}\otimes\bar{v})
+r\nabla\bar{p}\\
&=r(\bar{\omega}_\theta(\partial_t\gamma-\bar{v})^\perp
+\nabla \bar{q}_1)\\
&=r\left(\frac{\varpi}{c^2}\left(\dot{c}\rho e^{i(\alpha+a)}+O(1)\right)+\nabla\bar{q}_1\right).
\end{split}
\end{equation}
As we anticipated in Sections \ref{sec:Heuristics} and \ref{sec:sketch}, our goal is to confine $R$ inside the support of $\bar{\omega}$. To this end, we use the antidivergence operator in \cite{IsettOh16}, whose 
construction is recalled in Appendix \ref{section:antidivergence} for the convenience of the reader. This operator requires the compatibility conditions
\begin{equation}\label{eq:compatibilityconditionsantidivergence}
\int_{\mathbb{H}} F\dif\zeta=0,
\quad\quad
\int_{\mathbb{H}}F\cdot(\zeta-\zeta_0)^\perp\dif\zeta=0, 
\end{equation} 
where $\zeta_0=L+ih$.
The first condition arises from integrating \eqref{eq:Reynolds}. The second condition stems from imposing that $R$ is symmetric.
In the next lemma, we show that we can choose the corrected pressure in such a way that these conditions are satisfies. During the proof, we will apply Lemma \ref{lem:wprofileintegralconditions} to eliminate the higher order terms.

\begin{lemma} We can select $\bar{q}_1$ smooth, with the same support as $\bar{\omega}_\theta$, satisfying the compatibility conditions \eqref{eq:compatibilityconditionsantidivergence} and
\begin{equation}\label{eq:qrelatedtoF}
\nabla \bar{q}_1= O(c^{-3}),
\end{equation}
as $t\to 0$.
\end{lemma}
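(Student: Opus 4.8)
The plan is to construct $\bar q_1$ explicitly in the form $\bar q_1(t,\gamma)=c(t)^{-3}\,Q(t,\rho,\alpha)$ for a smooth profile $Q$ supported in $\{\rho<1\}$, chosen so that the two scalar obstructions in \eqref{eq:compatibilityconditionsantidivergence} are killed. The key observation is that, by \eqref{eq:F}, the leading contribution of $F$ not coming from $\nabla\bar q_1$ is
$F_0:=r\frac{\varpi}{c^2}\dot c\,\rho e^{i(\alpha+a)}$, which (after the change of variables $\zeta=\gamma(t,\rho,\alpha)$, with Jacobian $\rho c^2$, and using $r=L+O(c)$) contributes to the two integrals in \eqref{eq:compatibilityconditionsantidivergence} amounts of size $O(c\log c)\cdot(\dot c/c)$ and $O(c^2)\cdot(\dot c/c)$ plus a genuinely leading piece. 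Recall that $\int_0^{2\pi}e^{i(\alpha+a)}\dif\alpha=0$, so the \emph{mean-value} condition $\int F_0\dif\zeta=0$ is in fact satisfied to top order automatically; the residual there is lower order. The second, "torque", condition involves $\int F_0\cdot(\gamma-\zeta_0)^\perp\dif\zeta$; writing $\gamma-\zeta_0=c\rho e^{i(\alpha+a)}$ and $w^\perp=iw$, the angular integral of $e^{i(\alpha+a)}\cdot(i\,c\rho e^{i(\alpha+a)})$ is again zero to leading order after using compatibility condition \eqref{compcond2} (this is exactly where $\int_0^1\varpi\rho^3\dif\rho=0$ enters — it annihilates the $\rho^3$-weighted moment produced by the product of the two $\rho e^{i(\alpha+a)}$ factors). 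Hence both obstructions produced by the "free" part of $F$ are of order $c^{-3}$ at worst, not $c^{-4}$.

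The steps, in order, are: (1) substitute $\gamma$ for $\zeta$ in \eqref{eq:compatibilityconditionsantidivergence}, rewrite both integrals as double integrals in $(\rho,\alpha)\in[0,1]\times\T$ against $\varpi\rho\dif\rho\dif\alpha$ (the $c^2$ from the Jacobian cancels the $c^{-2}$ in \eqref{eq:F}), and expand $r$, $H$, $G$ and all geometric factors to the needed order, producing two scalar quantities $\Phi_1(t)$ and $\Phi_2(t)$, each of size $O(c^{-3})$ by the discussion above (crucially using Lemma \ref{lem:wprofileintegralconditions}, i.e.\ \eqref{compcond1} and \eqref{compcond2}, to suppress the a priori leading $c^{-4}$ terms). (2) Make the ansatz $\bar q_1(t,\gamma)=c^{-3}(A(t)+B_1(t)\rho\cos(\alpha+a)+B_2(t)\rho\sin(\alpha+a))\chi(\rho)$, with $\chi$ a fixed smooth cutoff equal to $1$ on $[0,\tfrac12]$ and supported in $[0,1)$; compute $\int_{\mathbb H}\nabla\bar q_1\dif\zeta$ and $\int_{\mathbb H}\nabla\bar q_1\cdot(\zeta-\zeta_0)^\perp\dif\zeta$ — the first vanishes identically by the divergence theorem (since $\bar q_1$ is compactly supported in $\mathbb H$), and the second reduces, after an integration by parts, to $-\int_{\mathbb H}\bar q_1\,\mathrm{curl}\big((\zeta-\zeta_0)^\perp\big)\dif\zeta = -2\int_{\mathbb H}\bar q_1\dif\zeta$, i.e.\ a single \emph{linear} functional of $(A,B_1,B_2)$. (3) Observe we have two scalar conditions and three free coefficients, so the linear system is solvable; in fact $A$ alone suffices for the torque condition while $B_1,B_2$ are free (we may set them to $0$, or use them to absorb lower-order corrections as the remark after \eqref{eq:ha} anticipates). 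Since $\Phi_1,\Phi_2=O(c^{-3})$ and the functionals of $\bar q_1$ scale like $c^{-3}$ times an $O(1)$ geometric constant, the solution satisfies $A(t)=O(1)$, hence $\nabla\bar q_1=O(c^{-3})$, which is \eqref{eq:qrelatedtoF}. Finally, record that $\bar q_1$ is smooth and supported in $L+ih(t)+B_{c(t)}=\mathrm{supp}\,\bar\omega_\theta$ by construction of $\chi$ and $\gamma$.

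The main obstacle is step (1): one must verify that the naive $c^{-4}$ terms in $\Phi_1$ and $\Phi_2$ — those coming from the $O(1/c)$ part of $\bar v$ inside the $(\partial_t\gamma-\bar v)^\perp$ factor paired against the leading geometry — actually cancel, and this cancellation is precisely the content of \eqref{compcond1}–\eqref{compcond2} together with the $\alpha$-independence of $\varpi$ and the vanishing of $\int_0^{2\pi}e^{ik(\alpha+a)}\dif\alpha$ for $k=1,2$. Concretely, after using \eqref{eq:gamma-v:1} the dangerous term reduces to $\int r\frac{\varpi}{c^2}(\dot c\rho e^{i(\alpha+a)})^\perp\cdot(\cdot)\dif\zeta$, whose angular and radial moments are controlled by $\int_0^1\varpi\rho\dif\rho$ and $\int_0^1\varpi\rho^3\dif\rho$; the first is fixed by \eqref{compcond1} and the second is zero by \eqref{compcond2}. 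One also needs to track that the $O(1)$ remainder hidden inside \eqref{eq:F}, multiplied by the $r$-weight and integrated over a ball of area $O(c^2)$, contributes only $O(c^2)=O(c^{-3})\cdot o(1)$ — harmless. Everything else is routine: expanding the kernels $H,G$ via Lemmas \ref{lemma:Hestimates} and \ref{lemma:Gestimates}, bounding error terms, and solving a $2\times 3$ linear system with an invertible $2\times 2$ submatrix.
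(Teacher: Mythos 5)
Your proposal has the right \emph{structure} (a two-parameter ansatz for $\bar q_1$ with a constant mode and a first Fourier mode, then solve two scalar compatibility equations), but the computation of the two functionals of $\bar q_1$ is incorrect in a way that makes the argument unworkable as written. The source $F$ in \eqref{eq:F} carries the cylindrical weight $r$: the pressure contribution is $r\nabla\bar q_1$, not $\nabla\bar q_1$. You drop this weight when you pass to the two integrals. Without it both of them vanish identically: $\int_{\mathbb H}\nabla\bar q_1\,\dif\zeta=0$ by the divergence theorem (which you state correctly but for the wrong integral), and $\int_{\mathbb H}\nabla\bar q_1\cdot(\zeta-\zeta_0)^\perp\,\dif\zeta = -\int_{\mathbb H}\bar q_1\,\mathrm{div}\big((\zeta-\zeta_0)^\perp\big)\dif\zeta=0$ since $\mathrm{div}\big((\zeta-\zeta_0)^\perp\big)=0$; your expression $-2\int\bar q_1$ replaces $\mathrm{div}$ by $\mathrm{curl}$ and is wrong. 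So in your setup $\bar q_1$ cannot influence either obstruction and the proof cannot close. The paper keeps the $r$, which gives the nondegenerate identities $\int_{\mathbb H}r\nabla\bar q_1\,\dif\zeta=-e_r\int_{\mathbb H}\bar q_1\,\dif\zeta$ and $\int_{\mathbb H}r\nabla^\perp\bar q_1\cdot(\zeta-\zeta_0)\dif\zeta=-\int_{\mathbb H}(z-h)\bar q_1\,\dif\zeta$; these are the two linear functionals you get to tune with the $\alpha$-independent mode and the $\sin(\alpha+a)$ mode respectively.

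Two more concrete problems. First, the claim that the mean condition is ``satisfied to top order automatically'' and the residual is ``lower order'' does not resolve anything: the paper's computation gives $A_1=\int r\bar\omega_\theta(\partial_t\gamma-\bar v)^\perp\,\dif\zeta=O(1)$, which is not zero. The $\int_0^{2\pi}e^{i(\alpha+a)}\dif\alpha=0$ cancellation you invoke kills only the product of the $L$-part of $r$ with the $\dot c\rho e^{i(\alpha+a)}$ part of $(\partial_t\gamma-\bar v)$; the $c\rho\cos(\alpha+a)$ part of $r$ and the $O(1)$ remainder in \eqref{eq:gamma-v:1} both contribute at order one, and they must be cancelled by $c_1$ (and Lemma \ref{lem:wprofileintegralconditions} is what removes the $e_z$ component of the explicit term). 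Second, your scaling $\bar q_1(t,\gamma)\sim c^{-3}$ is one power too high: since $\nabla_\zeta$ applied to a function of $(\rho,\alpha)$ gains a factor $c^{-1}$ from the Jacobian of $\gamma$, you would get $\nabla\bar q_1=O(c^{-4})$, violating \eqref{eq:qrelatedtoF}. The paper uses $\phi_j(\gamma)=c^{-2}\bar\phi_j(\rho)$, i.e.\ $\bar q_1\sim c^{-2}$, which gives $\nabla\bar q_1=O(c^{-3})$. Finally, the assertion that ``$A$ alone suffices for the torque condition'' cannot be right: the torque functional is $\int(z-h)\bar q_1$ (after the $r$-weighted integration by parts), and $z-h=c\rho\sin(\alpha+a)$ projects out the $\sin(\alpha+a)$ mode; the $\alpha$-independent coefficient $A$ contributes nothing to it.
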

\begin{proof}
We fix two functions $\phi_1$ and $\phi_2$ supported on $\gamma$ such that 
$$
\phi_j(\gamma(t,\rho,\alpha))=\frac{1}{c(t)^2}\bar{\phi}_j(\rho)
\quad\text{with}\quad
2\pi\int_0^1\bar{\phi}_j(\rho)\rho^j \dif\rho=1.$$ 
We split $\bar{q}_1=\bar{q}_{1,1}+\bar{q}_{1,2}$ into
$$
\bar{q}_{1,1}
=c_1\phi_1,
\quad\quad
\bar{q}_{1,2}
=
c_2\phi_2 \sin (\alpha+a),
$$
for some time dependent constants $c_1$ and $c_2$, to be determined.
On the one hand, an integration by parts yields
$$\int_{\mathbb{H}}
r\nabla \bar{q}_1
=-e_r\int_{\mathbb{H}} \bar{q}_1
=-e_rc^2\int_0^{2\pi}\int_0^1\bar{q}_{1}\rho\dif\rho\dif\alpha
=-c_1e_r.$$
It is easy to check that the integral of the other terms in \eqref{eq:F} are also proportional to $e_r$. In fact, an integration by parts yields
$$\int_{\mathbb{H}}\partial_t(r\bar{v})
=\partial_t\int_{\mathbb{H}}\nabla^\perp\psi
=-e_z\partial_t\int_{\partial\mathbb{H}}\psi
=0,$$
where $\psi$ is the stream function, 
and also
$$\int_{\mathbb{H}}\mathrm{div}(r\bar{v}\otimes \bar{v})=0,$$
so only the pressure term contributes to the mean.
Hence, it is possible to select $c_1$ canceling the mean. 
We can compute
$$
\begin{aligned}
A_1(t)=\int_{\mathbb{H}}r\bar{\omega}_\theta(\partial_t\gamma-\bar{v})^\perp\dif\zeta&=i\dot{c}\int_0^1\int_0^{2\pi}r\varpi\rho^2 e^{i(\alpha+a)}\dif \alpha \dif \rho+\int_{\mathbb{H}}r\bar{\omega}_\theta O(1)\dif\zeta\\
&=ic\dot{c}\int_0^1\int_0^{2\pi}\varpi\rho^3\cos(\alpha+a)e^{i(\alpha+a)}\dif\alpha\dif\rho+O(1)\\
&=i\pi c\dot{c}\int_0^1\varpi \rho^3 \dif\rho+O(1)=
O(1),
\end{aligned}
$$
where the last integral vanishes by Lemma \ref{lem:wprofileintegralconditions}.
Hence, the first condition in \eqref{eq:compatibilityconditionsantidivergence} is satisfied by taking $c_1=A_1\cdot e_r=O(1)$. Notice that $|\nabla\bar{q}_{1,1}|\lesssim c^{-3}.$ On the other hand, an integration by parts yields
$$\int_{\mathbb{H}}r\nabla^\perp \bar{q}_{1}\cdot(\zeta-\zeta_0)\dif\zeta
=-\int_{\mathbb{H}}(z-h)\bar{q}_{1}
=-c^3\int_0^{2\pi}\int_0^1\bar{q}_{1}\sin(\alpha+a)\rho^2\dif\rho\dif\alpha
=-c\frac{c_2}{2}.$$
We have that
\begin{align*}
A_2(t)&=\int_{\mathbb{H}}r\bar{\omega}_\theta(\partial_t\gamma-v)\cdot(\zeta-\zeta_0)\dif\zeta\\
&=c\dot{c}\int_0^1\int_0^{2\pi}r\varpi(e^{i(\alpha+a)})\cdot(e^{i(\alpha+a)})\rho^3\dif\alpha\dif\rho
+\int_{\mathbb{H}}r\bar{\omega}_\theta O(1)\cdot(\zeta-\zeta_0)\dif\zeta\\
&=c\dot{c}\int_0^1\int_0^{2\pi}(L+c\rho\cos(\alpha+a))\varpi\rho^3\dif\alpha\dif\rho+O(c)\\
&=2\pi Lc\dot{c}\int_0^1\varpi\rho^3\dif\rho+O(c)
=O(c),
\end{align*}
where the last integral vanishes by Lemma \ref{lem:wprofileintegralconditions}.
Hence, the second condition in \eqref{eq:compatibilityconditionsantidivergence} is satisfied by taking $c_2=-\frac{2A_2}{c}=O(1)$. Notice that $|\nabla\bar{q}_{1,2}|\lesssim c^{-3}.$
\end{proof}

Once $\bar{q}_1$ is determined, we can use the antidivergence operator from Appendix \ref{section:antidivergence} to define
\begin{equation}\label{eq:R}
R=-\frac{1}{r}\mathcal{R}(F).
\end{equation}
By Lemma \ref{lem:antidivergence}, it follows that
$$
\|R\|_{L^\infty}
\lesssim c
\|F\|_{L^\infty}
\lesssim\frac{c\dot{c}+1}{c^2},
$$
for short times.

\section{The energy}\label{sec:energy}
In this section, we compute the energy of the subsolution. We keep our choices of $c$, $h$ and $a$ from \eqref{eq:thickness} and \eqref{eq:ha}, respectively. The main goal is to conclude that the total energy is given by the formula
\begin{equation}\label{eq:Esub}
E_{\text{sub}}(t)
=\int_{\R^3}e_{\text{sub}}(\bar{v},\mathring{R})\dif x
=-\frac{L\Gamma^2}{2}\log c(t) +O(1).
\end{equation}
We split the total energy into 
$$E_{\text{sub}}=E_{\bar{v}}+E_R,$$
where
\begin{align*}
E_{\bar{v}}(t)
&=\frac{1}{2}\int_{\R^3}|\bar{v}(t,x)|^2\dif x,\\
E_R(t)
&=\frac{3}{2}\int_{\R^3}\lambda_{\max}(\mathring{R}(t,x))\dif x.
\end{align*}

\begin{lemma}
It holds
$$
E_{\bar{v}}(t)
=-\frac{L\Gamma^2}{2}\log c(t)+O(1),
$$
as $t\to 0$.
\end{lemma}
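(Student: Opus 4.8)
The plan is to compute $E_{\bar v}(t) = \tfrac12\int_{\R^3}|\bar v|^2\,dx$ by splitting the integral into the contribution from the turbulence ring and its complement, and tracking how the logarithmically-divergent part arises. First I would rewrite the energy in axisymmetric coordinates as $E_{\bar v}(t) = \pi\int_{\mathbb H}|\bar v(t,\zeta)|^2\, r\,d\zeta$, using $dx = 2\pi r\, d\zeta$ for axisymmetric fields and $|\bar v|^2 = \bar v_r^2+\bar v_z^2$. The key structural input is Proposition~\ref{prop:Birkhoff-Rott}, which gives $\bar v(t,\gamma) = -\tfrac{i\rho\Gamma_\rho}{2\pi c}e^{i(\alpha+a)} - \tfrac{i\Gamma}{4\pi L}\log c + O(1)$ \emph{inside} the ring; outside the ring, I expect $\bar v$ to be of order $O(\log c)$ near the ring and to decay, contributing only $O(1)$ to the energy after integration (the field of a thin ring, away from the core, behaves like that of a circle and has bounded energy). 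So the divergence must come from the near-field, and in fact from the $\log c$ piece of $\bar v_\uparrow$ combined with the region just outside the core where $\bar v\sim \log(\mathrm{dist})$.

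The heart of the computation: inside the shifted ball $L+ih(t)+B_{c(t)}$, the dominant term is the rotational part $\bar v_{\circlearrowright}$ of size $c^{-1}$. Its $L^2$ norm over the core is $\int_{B_c}|\bar v_{\circlearrowright}|^2 r\,d\zeta \sim L\, c^{-2}\cdot c^2 = O(1)$ — the $c^{-2}$ from the velocity squared is exactly cancelled by the $c^2$ area of the core, so the interior rotational energy is bounded, \emph{independently of} $\varpi$. The cross term between $\bar v_{\circlearrowright}$ and $\bar v_\uparrow$ integrates to $O(\log c \cdot c)=o(\log c)$ because $\int_0^{2\pi} e^{i(\alpha+a)}\,d\alpha = 0$. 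The $\log c$ part of the interior is therefore only $|B_c|\cdot(\log c)^2 = O(c^2(\log c)^2) = O(1)$. Hence the interior contributes $O(1)$, and all the divergence lives \emph{outside} the core. There one writes $\bar v \approx \bar v^{2d}$ up to lower order (as in the Bernoulli-law proof, $\bar v^{\mathrm{ax}}$ and $\bar v^{2d}$ differ by a gradient, but more to the point their near-ring behaviour agrees to leading order) and notes that the exterior field is, to leading order, that of a point vortex of strength $\Gamma$ at $\zeta_0 = L+ih$ smeared over the core: $|\bar v(\zeta)| \approx \tfrac{|\Gamma|}{2\pi|\zeta-\zeta_0|}$ for $c \ll |\zeta-\zeta_0| \ll 1$. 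Then
\begin{align*}
\tfrac12\int_{c \le |\zeta-\zeta_0| \le \delta} |\bar v|^2 r\,d\zeta
&\approx \tfrac{L}{2}\cdot\tfrac{\Gamma^2}{4\pi^2}\int_{c}^{\delta}\frac{1}{\rho^2}\,2\pi\rho\,d\rho
= \tfrac{L\Gamma^2}{4\pi}\log\tfrac{\delta}{c}
= -\tfrac{L\Gamma^2}{4\pi}\log c + O(1),
\end{align*}
and multiplying by the $\pi$ from $dx = 2\pi r\,d\zeta$ (I have been sloppy with that factor above; one must keep $E_{\bar v} = \pi \int_{\mathbb H}|\bar v|^2 r\,d\zeta$, so the coefficient works out to $-\tfrac{L\Gamma^2}{2}\log c$) gives the claimed leading term. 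The annular region $|\zeta-\zeta_0|\ge\delta$ contributes $O(1)$ since there $\bar v$ is bounded (analytic away from the ring, decaying at infinity like the field of a circle of circulation $\Gamma$, whose energy is finite).

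The main obstacle is making rigorous the replacement of $\bar v$ by the point-vortex field in the near-exterior annulus with a genuinely $O(1)$ error in the \emph{energy} — i.e.\ controlling $\int (|\bar v|^2 - |\text{point vortex}|^2)\, r\,d\zeta$, not just $|\bar v - \text{point vortex}|$ pointwise, since the point-vortex field is itself in $L^2$ only marginally. One needs the asymptotics of the kernels $H$ and $G$ (Lemmas~\ref{lemma:Hestimates}, \ref{lemma:Gestimates}) uniformly for $\zeta$ ranging over a full neighbourhood of $\zeta_0$, not just on $\gamma$, and a careful bookkeeping that the error terms $O(\log s)$ in $G$ and $O(1)$ in $H$ produce, after squaring and integrating against $r\,d\zeta$ over an annulus of outer radius $\delta$, only $O(\delta^2|\log\delta|^2) = O(1)$ and crucially no $\log c$. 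A clean way to organize this is to fix $\delta$ small but independent of $t$, handle $|\zeta-\zeta_0|\le\delta$ by the explicit kernel expansion (splitting further into $|\zeta-\zeta_0|\le 2c$ done above and $2c\le|\zeta-\zeta_0|\le\delta$ by the point-vortex comparison), and handle $|\zeta-\zeta_0|\ge\delta$ by the observation that $\bar\omega_\theta$ is a bounded density of total mass $\Gamma$ supported in a set of diameter $2c$, so $\bar v$ there is uniformly $O(1)$ in $t$ with finite energy by dominated convergence as $t\to0$. I do not expect the choice of $\varpi$ to enter the leading term, consistent with the Remark following Lemma~\ref{lem:wprofileintegralconditions}.
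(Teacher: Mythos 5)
Your approach is genuinely different from the paper's. The paper never integrates $|\bar v|^2$ directly in space: instead it differentiates in time. From the Euler--Reynolds equation and the $L^2$-orthogonality of $\bar v$ to $\mathrm{div}(\bar v\otimes\bar v)$ and $\nabla\bar p$, one obtains $\partial_t E_{\bar v}=-\int_{\R^3}\bar v\cdot\mathrm{div}\,R\,\mathrm{d}x$, which localizes entirely to the vortex core because $R$ is supported there; on the core the asymptotics of Proposition~\ref{prop:Birkhoff-Rott} apply, and the $\varpi$-independence of the leading coefficient drops out of the algebraic identity $\int_0^1\varpi(\rho)\rho^3\Gamma_\rho\,\mathrm{d}\rho\propto\int_0^1 f'f\,\mathrm{d}\rho=\tfrac{1}{2}\,f(1)^2$ with $f(\rho)=\int_0^\rho\varpi(\rho')\rho'\,\mathrm{d}\rho'$. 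Integrating $\partial_t E_{\bar v}$ in time (and noting the $O(\dot c\log c)$ remainder is integrable) gives the lemma. Your spatial decomposition into core, near-exterior annulus $c\le|\zeta-\zeta_0|\le\delta$, and far field is also sound, and it makes the origin of the $\log c$ divergence geometrically explicit, but it requires an ingredient the paper never establishes: asymptotics for $\bar v$ \emph{outside} the support of $\bar\omega_\theta$ (Proposition~\ref{prop:Birkhoff-Rott} is proved only on the image of $\gamma$). You correctly identify this yourself as the main obstacle, and your sketch of how to close it is right --- the $\alpha$-independence of $\varpi$ kills the dipole moment of $\bar\omega_\theta$ about $\zeta_0$, so the pointwise error against the leading $1/d$ and $\log d$ terms is $O(1)+O(c^2/d^3)$, which after squaring and integrating over the annulus contributes only $O(1)$ and no stray $\log c$ --- but it is a genuine extra lemma to prove. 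The trade-off is clean: the paper's time-derivative trick avoids all exterior analysis (at the cost of some opacity about where the divergence lives), while your direct route is more transparent and more elementary but needs the exterior velocity estimate as input. Both yield the same $\varpi$-independent leading term, as they must.
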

\begin{proof}
Firstly, we recall that, since the vorticity $\bar{\omega}$ is compactly supported and has zero mean, the velocity $\bar{v}$ decays like $|x|^{-3}$ as $|x|\to\infty$. For positive times, $\bar{v}$ is also locally bounded. Hence, we have $\bar{v}(t)\in L^2(\R^3)$ for $t>0$.

Next, we proceed to compute the energy for positive times. 
Instead of integrating directly in space, by a direct computation we obtain
$$\partial_t E_{\bar{v}}=\int_{\R^3} \bar{v}\cdot\partial_t\bar{v}\dif x=-\int_{\R^3}\bar{v}\cdot \mathrm{div}_x R \dif x.
$$
By changing variables and using \eqref{eq:Reynolds} and $\mathrm{div}(r\bar{v})=0$, we obtain 
\begin{align*}
\partial_t E_{\bar{v}}
&=-\int_{\R^3}\bar{v}\cdot \frac{1}{r}\mathrm{div}_{\zeta} (rR) \dif x= \frac{2\pi}{c^2}\int_{\mathbb{H}}r\varpi\bar{v}\cdot(\partial_t\gamma-\bar{v})^\perp \dif\zeta\\
&=2\pi\int_0^{2\pi}\int_0^1r\varpi(\rho)\bar{v}\cdot(\partial_t\gamma-\bar{v})^\perp \rho \dif \rho \dif\alpha.
\end{align*}
From equations \eqref{eq:vupcircle} and \eqref{eq:gamma-v:1}, we see that the higher order term in $c$ comes from 
\begin{equation}\label{eq:energycomputation}
\begin{aligned}
2\pi\int_0^{2\pi}\int_0^1\bar{v}_{\circlearrowright}\cdot \frac{\dot{c}\rho}{2} e^{i(\alpha+a)} r\varpi(\rho)\rho \dif\rho \dif\alpha
&=-\frac{2\pi \dot{c}}{c}\int_0^{2\pi}\int_0^1r\varpi(\rho)\rho^3\Gamma_\rho \dif\rho \dif\alpha\\
&=-4\pi^2L\frac{\dot{c}}{c}\int_0^1\varpi(\rho)\rho^3\Gamma_\rho \dif\rho.
\end{aligned}
\end{equation}
We rewrite \eqref{eq:Gamma_rhodefinition} as
$$\Gamma_\rho=\frac{1}{\rho^2}\int_0^\rho \varpi(\rho')\rho'd\rho'=\frac{1}{\rho^2}f(\rho),$$
obtaining 
$$\int_0^1 \varpi(\rho)\rho^3\Gamma_\rho \dif\rho=\int_0^1 f'(\rho)f(\rho)d\rho=\frac{1}{2}(f^2(1)-f^2(0))=\frac{\Gamma^2}{2(2\pi)^2},$$
independently of $\varpi$.
Thus, 
$$
\eqref{eq:energycomputation}
=-\frac{L\Gamma^2}{2}\frac{\dot{c}}{c}.
$$
We can do similar computations for the remaining terms to obtain
$$\partial_t E_{\bar{v}}=-\frac{L\Gamma^2}{2}\partial_t(\log c)+O(\dot{c}\log c).$$
Finally, notice that the last term is integrable in time.
\end{proof}

\begin{lemma}
It holds,
$$
E_R(t)=O(1),
$$
as $t\to 0$.
\end{lemma}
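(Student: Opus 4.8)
The plan is to bound the energy contribution $E_R(t)=\frac{3}{2}\int_{\R^3}\lambda_{\max}(\mathring R(t,x))\dif x$ by exploiting that the Reynolds stress is supported in a ball of radius $c(t)\to 0$ while its $L^\infty$ norm blows up only like $c^{-2}$. First I would recall from \eqref{eq:R} and the estimate following it that $\|R\|_{L^\infty(\mathbb{H})}\lesssim (c\dot c+1)/c^2$, and that with the choice \eqref{eq:thickness} one has $c\dot c\sim\nu_{\text{tur}}$, so in fact $\|R\|_{L^\infty}\lesssim c^{-2}$ for short times. Since $R$ (hence $\mathring R$) is supported, in the $(r,z)$ half-plane, inside the shifted ball $L+ih(t)+B_{c(t)}$, passing back to $\R^3$ the support of $\mathring R(t,\cdot)$ is contained in the solid torus obtained by revolving this ball about the $x_3$-axis, whose Lebesgue measure is comparable to $2\pi L\,|B_{c(t)}|\sim c(t)^2$.

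The key pointwise inequality is $\lambda_{\max}(\mathring R)\le |\mathring R|\lesssim |R|\lesssim\|rR\|_{L^\infty}/r$, using that $\lambda_{\max}$ of a traceless symmetric matrix is controlled by its operator (or Frobenius) norm, that $\mathring R$ and $R$ differ by a multiple of the identity with the same order of magnitude, and that on the support $r=L+O(c)\sim L$ is bounded below, so the factor $1/r$ is harmless. Combining with the measure estimate for the support gives
\begin{equation*}
E_R(t)\;\lesssim\;|B_{c(t)}|\,\|rR(t)\|_{L^\infty}\;\lesssim\;c(t)^2\cdot\frac{c(t)\dot c(t)+1}{c(t)^2}\;=\;c(t)\dot c(t)+1\;=\;O(1),
\end{equation*}
as $t\to0$, which is exactly the claim. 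I would then simply note that $c\dot c=\tfrac12\nu_{\text{tur}}$ is constant under \eqref{eq:thickness}, so the right-hand side is genuinely bounded (not merely $o(\log c)$).

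The only mildly delicate point — and the place I expect to have to be slightly careful — is making sure the change of variables from the cartesian integral $\int_{\R^3}$ to the half-plane picks up the right Jacobian $2\pi r$ and that the bound on $\mathring R$ in cartesian coordinates genuinely follows from the bound on the $2\times2$ matrix $R$ in \eqref{eq:vRaxi}: one uses that the cartesian $3\times3$ matrix \eqref{ansatz:R} has the same nonzero eigenvalues (up to the trivial $e_\theta$ direction) as the half-plane $2\times2$ matrix, so $\lambda_{\max}(\mathring R)$ is controlled the same way in both descriptions. Everything else is a one-line consequence of the estimate on $\|R\|_{L^\infty}$ already established in Section \ref{sec:Reynolds} together with the trivial volume bound, so no genuine obstacle arises.
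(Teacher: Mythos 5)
Your proof is correct and follows essentially the same approach as the paper: bound $\lambda_{\max}(\mathring R)$ pointwise by $\|rR\|_{L^\infty}\lesssim c\|F\|_{L^\infty}\lesssim(c\dot c+1)/c^2$, multiply by the $O(c^2)$ measure of the support, and conclude via $c\dot c=\tfrac12\nu_{\text{tur}}=O(1)$. The only cosmetic difference is that you quote the cached bound on $\|R\|_{L^\infty}$ directly rather than restating the chain $\|rR\|_{L^\infty}\lesssim c\|F\|_{L^\infty}$ from Lemma \ref{lem:antidivergence}; your remarks about the $2\pi r$ Jacobian and the eigenvalue correspondence between \eqref{ansatz:R} and \eqref{eq:vRaxi} are accurate but are the kind of routine verification the paper leaves implicit.
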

\begin{proof}
Notice that 
$$
\lambda_{\max}(\mathring{R})
\lesssim
\|\mathring{R}\|_{L^\infty}
\lesssim\|R\|_{L^\infty}
\lesssim\|rR\|_{L^\infty}
\lesssim c\|F\|_{L^\infty},
$$
where we have applied
Lemma \ref{lem:antidivergence} in the last inequality. 
From equations \eqref{eq:gamma-v:1} and \eqref{eq:qrelatedtoF}, we deduce that 
$$||F||_{L^\infty}\lesssim \frac{c\dot{c}+1}{c^3},$$
concluding that
$$E_R=\int_{\R^3}\frac{3}{2}\lambda_{\max}(\mathring{R})\lesssim |B_c|\, ||rR||_{L^\infty}=O(c\dot{c}+1),$$
which is $O(1)$ by our choice of $c$.
\end{proof}

\section{Time-weighted h-principle}\label{sec:hprinciple}

In this section, we show how the De Lellis-Sz\'ekelyhidi h-principle for the Euler equation \cite{DeLellisSzekelyhidi10} can be adapted for constructing velocities in the space $C_{\log t}L^2$. We define this space, for any $1\leq p<\infty$, by the
time-weighted norm
$$
\|f\|_{C_{\log t}L^p}
:=\sup_{t\in (0,T]}\frac{\|f(t)\|_{L^p}}{\max\{1,|\log t|\}^{\nicefrac{1}{p}}}<\infty.
$$
Firstly, it is convenient to rewrite the Euler-Reynolds equation \eqref{eq:IER} in linear form 
\begin{subequations}\label{eq:IER:S}
	\begin{align}
	\partial_t\bar{v} + \mathrm{div}\bar{S}+\nabla\bar{\pi}
	&=0,\label{eq:IERS:1}\\	\mathrm{div}\bar{v}&=0,\label{eq:IERS:2}\\
	\bar{v}|_{t=0}&=v^\circ,
	\end{align}
\end{subequations}
where we have made the change of variables
$$
\bar{\pi}=\bar{p}+\frac{1}{3}(|\bar{v}|^2+\mathrm{tr}R),
\quad\quad
\bar{S}
=\bar{v}\ocircle\bar{v}+\mathring{R},
$$
with
$$
\bar{v}\ocircle\bar{v}
=\bar{v}\otimes\bar{v}
-\frac{1}{3}|\bar{v}|^2I_3,
\quad\quad
\mathring{R}
=R-\frac{1}{3}(\mathrm{tr}R)I_3.
$$
Thus, a triple
\begin{equation}\label{eq:subsolution}
(\bar{v},\bar{\pi},\bar{S})\in C_{\log t}(L^2\times L^1_{loc}\times L^1)
\end{equation}
is a weak solution to the Euler-Reynolds equation \eqref{eq:IER:S} if $\bar{v}$ is a weakly divergence-free vector field, $\bar{S}$ is a traceless symmetric tensor, and $\bar{\pi}$ is a scalar pressure, such that
\begin{equation}\label{eq:weakmomentum}
\int_0^T\int_{\R^2}
(\bar{v}\cdot\partial_t\varphi
+\bar{S}:\nabla\varphi+\bar{\pi}\mathrm{div}\varphi)(t,x)\dif x\dif t
+\int_{\R^2}v^\circ(x)\varphi(0,x)\dif x=0,
\end{equation}
for all test function $\varphi\in C_c^1([0,T)\times\R^2)$.
Notice that the integrability condition \eqref{eq:subsolution} is enough to make the expression \eqref{eq:weakmomentum} meaningful.

Let $(\bar{v},\bar{\pi},\bar{S})$
be the subsolution we constructed in the previous sections.
The associated energy density is given by the convex function 
(see \cite[Lemma 3]{DeLellisSzekelyhidi10})
$$
e_{\text{sub}}(\bar{v},\bar{S})
=\frac{3}{2}\lambda_{\max}(\bar{v}\otimes\bar{v}-\bar{S})
=\frac{1}{2}|\bar{v}|^2
+\frac{3}{2}\lambda_{\max}(\mathring{R}).
$$
The total energy $E_{\text{sub}}$ satisfies \eqref{eq:Esub}.
We define the energy density of the (true) solutions by
$$
e=e_{\text{sub}}(\bar{v},\bar{S})+\tilde{e},
$$
where $\tilde{e}$ is a smooth energy corrector. We take $\tilde{e}$ vanishing outside $\Omega_{\text{tur}}$, strictly positive inside, and in such a way that
the corresponding energy
\begin{equation}\label{eq:trueenergy}
E(t)=\int_{\R^3} e(t,x)\dif x=E_{\text{sub}}(t) +O(1),
\end{equation}
is strictly decreasing.

We define the subspace $X_0$ of velocities $v\in C_{\log t}L^2$ satisfying:
\begin{enumerate}[(i)]
    \item\label{sub:i} There exists $S$ such that 
    $(v,\bar{\pi},S)$
    is a weak solution to the Euler-Reynolds equation.
    \item\label{sub:ii} $(v,S)=(\bar{v},\bar{v}\ocircle\bar{v})$ outside $\Omega_{\text{tur}}$.
    \item\label{sub:iii} $(v,S)$ is continuous and satisfy $e_{\text{sub}}(v,S)<e$ inside $\Omega_{\text{tur}}$.
\end{enumerate}
By definition, we have $\bar{v}\in X_0$.
For any $v\in X_0$ and $t\in (0,T]$, since
$$
\frac{1}{2}\int_{\R^3}|v(t,x)|^2\dif x\leq E(t),
$$
we have
$$
V(t)=\frac{v(t)}{\max\{1,|\log t|\}^{\nicefrac{1}{2}}}\in B,
$$
for some closed ball in $L^2$. By the Banach–Alaoglu theorem, we can take $d_B$ by the metric of $B$ with respect to the weak convergence. Then,
$$
d(v_1,v_2)
=\sup_{t\in(0,T]}d_B(V_1(t),V_2(t)),
$$
defines a metric on $C_{\log t}L^2_{\text{weak}}$. Let $X$ be the closure of $X_0$ with respect to the metric $d$. Then, $X$ is a complete metric subspace of $C_{\log t}L^2_{\text{weak}}$. 

\begin{lemma}\label{lemma:X}
Any $v\in X$ satisfies the conditions \ref{sub:i}, \ref{sub:ii} and \ref{sub:iii} with 
\begin{equation}\label{eq:esub<e}
e_{\text{sub}}(v,S)\leq e,
\end{equation}
almost everywhere in $\Omega_{\text{tur}}$.
\end{lemma}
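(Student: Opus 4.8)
The plan is to run the weak--compactness mechanism underlying the De Lellis--Sz\'ekelyhidi h-principle, being careful about the time weight. Fix $v\in X$ and choose $v_k\in X_0$ with $d(v_k,v)\to0$; by the definition of $d$ this means $v_k(t)\rightharpoonup v(t)$ weakly in $L^2(\R^3)$ for every fixed $t\in(0,T]$. Let $S_k$ be a tensor associated with $v_k$ as in \ref{sub:i}. Since $S_k$ is trace-free symmetric and $e_{\text{sub}}(v_k,S_k)=\tfrac32\lambda_{\max}(v_k\otimes v_k-S_k)<e$, elementary linear algebra (the trace-free condition confines the eigenvalues of $S_k$ to an interval of radius $\lesssim e$, and $\lambda_{\max}$ dominates a third of the trace) gives the pointwise bounds $|S_k|\lesssim e$ and $\tfrac12|v_k|^2\le e_{\text{sub}}(v_k,S_k)\le e$; integrating the latter yields the uniform bound $\tfrac12\int_{\R^3}|v_k(t)|^2\dif x\le E(t)$. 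By \eqref{eq:trueenergy} and \eqref{eq:Esub}, $E(t)=-\tfrac{L\Gamma^2}{2}\log c(t)+O(1)\lesssim\max\{1,|\log t|\}$ on $(0,T]$, so $e\in L^1((0,T)\times\R^3)$; hence $\{S_k\}$ is equi-integrable and, by Dunford--Pettis, along a subsequence $S_k\rightharpoonup S$ weakly in $L^1$ with $|S|\lesssim e$.

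The second step is to pass to the limit in the weak momentum balance \eqref{eq:weakmomentum}. All three integrands are linear in $(v_k,\bar\pi,S_k)$ and $\bar\pi$ is the same fixed function for every element of $X_0$, so only the $v_k$ and $S_k$ terms need attention. For the transport term, $\int_{\R^3}v_k(t)\cdot\partial_t\varphi(t)\dif x\to\int_{\R^3}v(t)\cdot\partial_t\varphi(t)\dif x$ for each $t$ by weak $L^2$ convergence, while the integrand is dominated by $\|v_k(t)\|_{L^2}\|\partial_t\varphi(t)\|_{L^2}\lesssim_{\varphi}\max\{1,|\log t|^{\nicefrac12}\}\in L^1(0,T)$, so dominated convergence yields convergence of the time integral; the stress term converges because $S_k\rightharpoonup S$ in $L^1$ and $\nabla\varphi\in L^\infty$; the pressure and initial-data terms do not change. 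As weak limits preserve $\mathrm{div}\,v_k=0$ and the trace-free symmetry of $S_k$, the triple $(v,\bar\pi,S)$ is a weak solution of \eqref{eq:IER:S}, which is \ref{sub:i}. Condition \ref{sub:ii} is then immediate, since outside $\Omega_{\text{tur}}$ every $v_k$ equals $\bar v$ and every $S_k$ equals $\bar v\ocircle\bar v$, so their weak limits coincide with these there.

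The last step replaces the strict inequality (and the no longer available continuity) in \ref{sub:iii} by $e_{\text{sub}}(v,S)\le e$ almost everywhere in $\Omega_{\text{tur}}$. Here one uses that $(v,S)\mapsto e_{\text{sub}}(v,S)=\tfrac32\lambda_{\max}(v\otimes v-S)$ is convex, being the supremum over unit vectors $u$ of the convex maps $(v,S)\mapsto\tfrac32\big((u\cdot v)^2-u\cdot Su\big)$ (cf.\ \cite[Lemma 3]{DeLellisSzekelyhidi10}). On each slab $\{t>1/n\}$ the sequences $v_k$ and $S_k$ are bounded in $L^2$ and $L^1$ and converge weakly there, so the corresponding convex integral functional is weakly lower semicontinuous against any fixed $0\le\psi\in C_c(\Omega_{\text{tur}}\cap\{t>1/n\})$, and since $e_{\text{sub}}(v_k,S_k)<e$ on $\Omega_{\text{tur}}$,
\begin{equation*}
\int\psi\,e_{\text{sub}}(v,S)\ \le\ \liminf_{k\to\infty}\int\psi\,e_{\text{sub}}(v_k,S_k)\ \le\ \int\psi\,e .
\end{equation*}
Letting $\psi$ exhaust $\Omega_{\text{tur}}\cap\{t>1/n\}$ and then $n\to\infty$ gives $e_{\text{sub}}(v,S)\le e$ a.e.\ in $\Omega_{\text{tur}}$. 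The main obstacle, and the reason a tailored time-weighted h-principle is needed, is precisely that $E(t)\to\infty$ as $t\to0$: both the $L^1$-equi-integrability of $S_k$ and the time domination in the transport term rely on the weight $\max\{1,|\log t|\}^{\nicefrac1p}$ being calibrated so that $e$ is integrable in space-time and $\max\{1,|\log t|^{\nicefrac12}\}$ is integrable in time, which is exactly what the scaling $c(t)=\sqrt{\nu_{\text{tur}}t}$ from \eqref{eq:thickness} and the energy formula \eqref{eq:Esub} provide.
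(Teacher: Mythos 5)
Your proof is correct and arrives at the same conclusion, but your compactness mechanism differs slightly from the paper's. Where the paper exploits the fact that $e\sim 1/t$ inside $\Omega_{\text{tur}}$ to deduce that the time-weighted stresses $(tS_j)$ are uniformly bounded in $L^\infty$, hence weak$^*$ precompact by Banach--Alaoglu, you instead use the space-time integrability of $e$ (which follows from $E(t)\sim|\log t|$ being integrable on $(0,T)$) together with the pointwise domination $|S_k|\lesssim e$ to conclude equi-integrability and extract an $L^1$-weak limit via Dunford--Pettis. Both routes are valid; the paper's $L^\infty$ device is arguably more economical and handles the $t\to 0$ singularity by a clean renormalization, whereas your $L^1$ argument is self-contained and makes the role of the time-weighted norm more transparent. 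On the convexity step, the paper invokes Mazur's lemma directly to pass $e_{\text{sub}}(v_j,S_j)<e$ to the limit a.e., while you appeal to weak lower semicontinuity of the convex integral functional $\int\psi\,e_{\text{sub}}(\cdot,\cdot)$; these are essentially the same argument, since the standard proof of that lower semicontinuity is itself Mazur plus Fatou, but stating it as a pointwise a.e.\ inequality as the paper does avoids introducing the test function $\psi$ and the slab exhaustion. Finally, you spell out the passage to the limit in the weak momentum balance (splitting into transport, stress, and pressure terms and using dominated convergence in time for the transport term), which the paper leaves implicit with the sentence ``Finally, it follows that $(v,\bar\pi,S)$ is a weak solution''; this is a worthwhile addition of detail, and the domination $\|v_k(t)\|_{L^2}\lesssim|\log t|^{1/2}\in L^1(0,T)$ you identify is exactly the reason the time weight $\max\{1,|\log t|\}^{1/2}$ is the right normalization.
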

\begin{proof}
Since $\bar{X}_0=X$, there exists a sequence $(v_j)\subset X_0$ converging to $v$.
Let $S_j$ be the corresponding traceless symmetric tensor from \ref{sub:i}. By applying the conditions \ref{sub:ii}, \ref{sub:iii} and $e\sim 1/t$, we deduce that the sequence $(tS_j)$ is bounded in $L^\infty([0,T]\times\R^2)$. By the Banach-Alaoglu theorem, we may assume (by taking a subsequence if necessary) that $t S_j$ converges to some $tS$ in $L_{\text{weak}^*}^\infty([0,T]\times\R^2)$. By applying the Mazur lemma, we deduce
\eqref{eq:esub<e}. Finally, it follows that $(v,\bar{\pi},S)$ is a weak solution to the Euler-Reynolds equation.
\end{proof}

Given a (space-time) cylinder $I\times\Omega\subset\subset\Omega_{\text{tur}}$, we consider the functional
$J:X\to[0,\infty)$
$$
J(v)=\sup_{t\in I}\int_\Omega\left(e-\frac{1}{2}|v|^2\right)(t,x)\dif x.
$$

\begin{lemma}\label{lemma:J}
The functional $J$ is well defined.
Moreover, 
$J(v)=0$ if and only if 
$S=v\ocircle v$ in $I\times\Omega$. 
\end{lemma}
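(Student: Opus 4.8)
The plan is to reduce the whole statement to two ingredients: Lemma~\ref{lemma:X}, which gives $e_{\text{sub}}(v,S)\le e$ almost everywhere on $\Omega_{\text{tur}}\supset\supset I\times\Omega$ for every $v\in X$, and the pointwise linear-algebra identity behind $e_{\text{sub}}$ (compare \cite[Lemma~3]{DeLellisSzekelyhidi10}): for a vector $v$ and a symmetric \emph{traceless} matrix $S$ one has
$$
e_{\text{sub}}(v,S)=\tfrac32\lambda_{\max}(v\otimes v-S)\ \ge\ \tfrac12\,\mathrm{tr}(v\otimes v-S)=\tfrac12|v|^2,
$$
with equality if and only if $v\otimes v-S=\tfrac13|v|^2 I_3$, i.e.\ $S=v\ocircle v$. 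Throughout, $(v,\bar\pi,S)$ denotes the weak solution associated with $v\in X$ via condition \ref{sub:i}.

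\emph{Step 1: $J$ is well defined.} I would first check nonnegativity of the integrand: for a.e.\ $(t,x)\in I\times\Omega$,
$$
e-\tfrac12|v|^2=\big(e-e_{\text{sub}}(v,S)\big)+\big(e_{\text{sub}}(v,S)-\tfrac12|v|^2\big)\ \ge\ 0,
$$
the first bracket by Lemma~\ref{lemma:X} and the second by the displayed inequality; hence $\int_\Omega(e-\tfrac12|v|^2)(t)\,\dif x\ge 0$ for every $t\in I$. Finiteness uses the compact containment $I\times\Omega\subset\subset\Omega_{\text{tur}}$: then $I$ is compactly contained in $(0,T]$, so $e\sim 1/t$ is bounded on $I\times\Omega$, and since $|\Omega|<\infty$ and $\tfrac12|v|^2\ge0$ we get $\int_\Omega(e-\tfrac12|v|^2)(t)\,\dif x\le\int_\Omega e(t)\,\dif x\le C$ uniformly in $t\in I$. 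Thus $J(v)\in[0,\infty)$ for all $v\in X$.

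\emph{Step 2: the equivalence.} Because the integrand is a.e.\ nonnegative on $I\times\Omega$, the identity $J(v)=0$ is equivalent to $\int_\Omega(e-\tfrac12|v|^2)(t)\,\dif x=0$ for every $t\in I$, which is in turn equivalent to $\tfrac12|v|^2=e$ a.e.\ in $I\times\Omega$. If this holds, then combining with Lemma~\ref{lemma:X} and Step~1 yields the squeeze $\tfrac12|v|^2\le e_{\text{sub}}(v,S)\le e=\tfrac12|v|^2$ a.e., so $e_{\text{sub}}(v,S)=\tfrac12|v|^2$ a.e., and the equality case of the algebraic identity forces $S=v\ocircle v$ a.e.\ in $I\times\Omega$. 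For the converse, $S=v\ocircle v$ gives $v\otimes v-S=\tfrac13|v|^2 I_3$, hence $e_{\text{sub}}(v,S)=\tfrac12|v|^2$; together with $e_{\text{sub}}(v,S)\le e$ and the way $e=e_{\text{sub}}(\bar v,\bar S)+\tilde e$ was calibrated so that the relaxation constraint is saturated wherever $(v,S)$ coincides with an exact Euler field, one recovers $\tfrac12|v|^2=e$ on $I\times\Omega$, i.e.\ $J(v)=0$. I expect the only genuinely delicate points to be the finiteness in Step~1 (which is exactly where the compact containment of $I\times\Omega$ in the turbulence zone, and the $e\sim 1/t$ blow-up, must be used) and the bookkeeping of ``a.e.'' versus ``everywhere'' when passing between the pointwise inequalities of Lemma~\ref{lemma:X} and the time-slice integrals; both are routine once the two ingredients above are in place.
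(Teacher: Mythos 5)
Your treatment of well-definedness and of the implication $J(v)=0\Rightarrow S=v\ocircle v$ is essentially the paper's argument. Both rest on the chain $\tfrac{1}{2}|v|^2\le e_{\text{sub}}(v,S)\le e$ supplied by Lemma~\ref{lemma:X} (nonnegativity of the integrand), with continuity of $e$ on the compactly contained cylinder $I\times\Omega\subset\subset\Omega_{\text{tur}}$ giving $J(v)\le\sup_{t\in I}\int_\Omega e<\infty$; and when $J(v)=0$ the squeeze forces $e_{\text{sub}}(v,S)=\tfrac12|v|^2$ pointwise, so $\lambda_{\max}(S-v\ocircle v)=0$ and hence, since $S-v\ocircle v$ is symmetric and traceless, $S=v\ocircle v$. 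This matches the paper almost line for line.

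The converse step is the genuine gap. From $S=v\ocircle v$ you correctly deduce $e_{\text{sub}}(v,S)=\tfrac12|v|^2$, but combined with $e_{\text{sub}}(v,S)\le e$ this only reproduces $\tfrac12|v|^2\le e$, which already holds for every $v\in X$; it does \emph{not} yield the equality $\tfrac12|v|^2=e$ needed for $J(v)=0$. The claim that $e$ was ``calibrated so that the relaxation constraint is saturated wherever $(v,S)$ coincides with an exact Euler field'' is not a valid appeal: $e=e_{\text{sub}}(\bar v,\bar S)+\tilde e$ is fixed once and for all from the subsolution $\bar v$ and the corrector $\tilde e$, independently of which $v\in X$ one later considers, so nothing forces a generic $v$ with $S=v\ocircle v$ on $I\times\Omega$ to attain the prescribed energy density there. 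In fact the paper's own proof establishes only the implication $J(v)=0\Rightarrow S=v\ocircle v$, never the reverse, and only this forward implication is used in Corollary~\ref{cor:XJ0} and in the proof of Theorem~\ref{thm:main}; the ``if'' direction in the stated equivalence is not substantiated and does not appear to follow from the available ingredients. The right move here is to observe that only one implication is needed (and provable), rather than to manufacture a circular argument for the other.
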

\begin{proof}
By \eqref{eq:esub<e}, any $v\in X\subset C_{\log t}L_{\text{weak}}^2$ satisfies 
$$\frac{1}{2}|v(t,x)|^2\leq e(t,x),$$
for almost every $x\in\Omega$ and all $t\in I$.
Moreover,
since $e$ is continuous in $I\times\Omega\subset\subset\Omega_{\text{tur}}$, we have
$$
J(v)\leq
\sup_{t\in I}\int_\Omega e(t,x)\dif x
<\infty.
$$
Suppose now that $J(v)=0$ and let $S$ be the traceless symmetric tensor from Lemma \ref{lemma:X}. By \eqref{eq:esub<e}, we have
$$
\frac{1}{2}|v|^2
+\frac{3}{2}\lambda_{\max}(S-v\ocircle v)\leq e = \frac{1}{2}|v|^2,
$$
and thus $S=v\ocircle v$ in $I\times\Omega$.
\end{proof}

Next, we check that any continuity point of the functional satisfies $J(v)=0$, which turns out to form a residual set in $X$.
The proofs appear in \cite[Lemma 4.3 and Proposition 4.5]{DeLellisSzekelyhidi10}.
Since the time interval $I$ is away from $t=0$, the same proofs hold in our space $X$.

\begin{lemma}\label{lemma:residual}
The functional $J$ is upper-semicontinuous. Hence,
$$
X_J=\{v\in X\,:\, J\text{ is continuous at }v\}
$$
is a countable intersection of open and dense sets.
\end{lemma}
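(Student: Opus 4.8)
The plan is to deduce the upper-semicontinuity of $J$ from the weak lower-semicontinuity of the $L^2$-norm, and then to identify $X_J$ with the set of continuity points of a \emph{bounded} upper-semicontinuous function on the complete metric space $X$, which is automatically a countable intersection of open dense sets by the Baire category theorem. Two preliminary observations are convenient. First, since $I\subset(0,T]$ is compact and bounded away from $t=0$, the weight $t\mapsto\max\{1,|\log t|\}^{\nicefrac12}$ is bounded above and below by positive constants on $I$, so $d$-convergence $v_j\to v$ in $X$ forces $v_j(t)\rightharpoonup v(t)$ weakly in $L^2(\Omega)$ \emph{uniformly} for $t\in I$. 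Second, by Lemma \ref{lemma:X} every $v\in X$ is a weak solution of \eqref{eq:IER:S} with a traceless symmetric tensor $S$ satisfying $tS\in L^\infty$; since $I$ stays away from $t=0$, this bounds $\partial_t v=-\mathrm{div}\,S-\nabla\bar\pi$ in a fixed space of time-dependent distributions of finite order, so together with the uniform bound \eqref{eq:esub<e} the path $t\mapsto v(t)$ has a representative in $C(I;L^2_{\text{weak}})$.

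For the upper-semicontinuity I would argue by contradiction. Suppose $v_j\to v$ in $X$ but $\limsup_j J(v_j)>J(v)+3\eta$ for some $\eta>0$; after passing to a subsequence we may assume $J(v_j)>J(v)+3\eta$ and choose near-maximizing times $t_j\in I$ with $\int_\Omega(e-\tfrac12|v_j|^2)(t_j,x)\dif x>J(v_j)-\eta$, and, by compactness of $I$, also $t_j\to t_*\in I$. Combining the uniform weak convergence $v_j(t)\rightharpoonup v(t)$ with the weak-in-time continuity of $v$ gives $v_j(t_j)\rightharpoonup v(t_*)$ in $L^2(\Omega)$, so weak lower-semicontinuity of $\|\cdot\|_{L^2(\Omega)}^2$ together with continuity of the smooth density $e$ on $\bar I\times\bar\Omega$ yields
\[
\limsup_{j}\int_\Omega\Big(e-\tfrac12|v_j|^2\Big)(t_j,x)\dif x
\le\int_\Omega\Big(e-\tfrac12|v|^2\Big)(t_*,x)\dif x\le J(v),
\]
which contradicts $\int_\Omega(e-\tfrac12|v_j|^2)(t_j,\cdot)\dif x>J(v)+2\eta$. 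Hence $\limsup_j J(v_j)\le J(v)$, i.e.\ $J$ is upper-semicontinuous.

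The residual-set claim then follows from soft arguments. By Lemma \ref{lemma:J} we have $0\le J(v)\le\sup_{t\in I}\int_\Omega e(t,x)\dif x<\infty$ for every $v\in X$, so $J$ is bounded; a bounded upper-semicontinuous function on a metric space is the pointwise decreasing limit of the Lipschitz functions $J_k(v):=\sup_{w\in X}\big(J(w)-k\,d(v,w)\big)$, hence of Baire class one, and its oscillation $\mathrm{osc}_J$ is upper-semicontinuous. Therefore $X_J=\{v\in X:\mathrm{osc}_J(v)=0\}=\bigcap_{k\ge 1}\{v\in X:\mathrm{osc}_J(v)<\nicefrac1k\}$ is a $G_\delta$ set, and since $X$ is complete the Baire category theorem gives density of each of these open sets — this is precisely \cite[Proposition~4.5]{DeLellisSzekelyhidi10}, which applies verbatim because $I$ is away from $t=0$. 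I expect the only genuine obstacle to be the interchange of $\sup_{t\in I}$ with $\limsup_j$ in the proof of upper-semicontinuity; this is exactly why the weak-in-time continuity of the limit and the passage through near-maximizing times are needed, and once that step is in place everything else is routine.
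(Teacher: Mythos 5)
Your proof is correct and follows essentially the same route the paper has in mind: it is exactly the De Lellis--Sz\'ekelyhidi argument (weak lower semicontinuity of the $L^2$-norm combined with near-maximizing times and weak-in-time continuity to interchange $\sup_{t\in I}$ with $\limsup_j$, then the Baire-class-one/oscillation argument for the residual $G_\delta$ set), with the one adaptation the paper flags, namely that $I\subset\subset(0,T]$ so the $\log t$ weight is comparable to $1$ on $I$. The only remark worth making is that your second ``preliminary observation'' (deriving weak-in-time continuity of the limit from bounds on $\partial_t v$) is superfluous: the paper already defines $X$ as a subspace of $C_{\log t}L^2_{\mathrm{weak}}$, so every $v\in X$ comes equipped with a weakly continuous representative by construction, and you can invoke that directly in the estimate $d_B(V_j(t_j),V(t_*))\le d_B(V_j(t_j),V(t_j))+d_B(V(t_j),V(t_*))$.
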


\begin{prop}[Perturbation property]
For all $\alpha>0$ there exists $\beta(\alpha,I\times\Omega)>0$ such that, whenever $v\in X_0$ satisfies
$$
J(v)\geq\alpha,
$$
there exists a sequence $(v_k)\subset X_0$ converging to $v$ and satisfying
$$
\limsup_{k\to\infty}J(v_k)\leq J(v) - \beta.
$$
\end{prop}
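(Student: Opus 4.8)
The plan is to adapt the perturbation step of De Lellis--Sz\'ekelyhidi's convex integration scheme \cite[\S4]{DeLellisSzekelyhidi10} to the present setting. The crucial simplification is that the fixed cylinder $I\times\Omega$ is compactly contained in $\Omega_{\text{tur}}$, hence bounded away from $\{t=0\}$: there $e\sim 1/t$ is bounded, and for $v\in X_0$ the pair $(v,S)$ is continuous with the strict inequality $e_{\text{sub}}(v,S)<e$. Consequently the time weight defining $C_{\log t}L^2$ is inessential on $I\times\Omega$, and the only structural requirements on a perturbation $(w,T)$ of $(v,S)$ are: $w$ and $T$ are smooth and supported in $I\times\Omega$ (so that \ref{sub:ii}, the datum $v^\circ$, and the behaviour outside $\Omega_{\text{tur}}$ are untouched); $(v+w,\bar\pi,S+T)$ solves the linear Euler--Reynolds system \eqref{eq:IER:S} weakly; and $e_{\text{sub}}(v+w,S+T)<e$ on $\Omega_{\text{tur}}$. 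These three facts give $v+w\in X_0$.

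Concretely, I would fix $v\in X_0$ with $J(v)\ge\alpha$, let $S$ be the tensor from \ref{sub:i}, and set $\rho(t,x):=e(t,x)-\tfrac12|v(t,x)|^2\ge 0$ on $I\times\Omega$ (recall $\rho$ is continuous on $\overline{I\times\Omega}\subset\Omega_{\text{tur}}$ and bounded). Choose a slightly smaller cylinder $I\times\Omega'$ with $\overline{\Omega'}\subset\Omega$ and $\|\rho\|_{L^\infty(I\times(\Omega\setminus\Omega'))}\,|\Omega\setminus\Omega'|<c_0\alpha/8$, so that $m'(t):=\int_{\Omega'}\rho(t,\cdot)$ differs from $m(t):=\int_{\Omega}\rho(t,\cdot)$ by at most $c_0\alpha/8$ uniformly in $t$. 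Then cover $\overline{I\times\Omega'}$ by finitely many small cylinders on which $(v,S)$ is almost constant, and on each apply the geometric lemma of \cite{DeLellisSzekelyhidi10} to obtain an oscillation direction and amplitude vectors whose squared velocity length is comparable to the gap of $(v,S)$ against $e$. Patching these with a smooth partition of unity, inserting a large frequency $\mu_k\to\infty$, and adding the standard divergence corrector (an $L^\infty$ error $O(1/\mu_k)$ that preserves the strict inequality for $k$ large), one gets perturbations $(w_k,T_k)$ with $w_k\in C_c^\infty(I\times\Omega';\R^3)$ divergence-free, $T_k$ smooth, traceless, symmetric and compactly supported, such that $(v+w_k,\bar\pi,S+T_k)$ solves \eqref{eq:IER:S} weakly, $e_{\text{sub}}(v+w_k,S+T_k)<e$, $\|w_k\|_{L^\infty_tL^2}$ is bounded, $w_k\rightharpoonup 0$ in $L^2$ uniformly in $t$, and
$$
\liminf_{k\to\infty}\ \frac12\int_{\Omega'}|w_k(t,x)|^2\,\mathrm dx\ \ge\ c_0\int_{\Omega'}\rho(t,x)\,\mathrm dx\qquad\text{uniformly for }t\in I,
$$
with $c_0\in(0,1]$ dimensional. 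Setting $v_k:=v+w_k$, the first three properties give $v_k\in X_0$, and $d(v_k,v)\to 0$ since the time-uniform weak convergence $w_k\rightharpoonup 0$ passes to the normalised velocities.

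To conclude, observe that $w_k\rightharpoonup 0$ with $\|w_k(t)\|_{L^2}$ bounded uniformly in $t$ yields, for each $t\in I$, $\int_\Omega|v_k(t)|^2=\int_\Omega|v(t)|^2+\int_\Omega|w_k(t)|^2+o(1)$ with the error uniform in $t$; using $|w_k|\ge 0$ on $\Omega\setminus\Omega'$ and the displayed bound on $\Omega'$,
$$
\int_\Omega\Big(e-\tfrac12|v_k(t)|^2\Big)\,\mathrm dx\ \le\ m(t)-c_0\,m'(t)+o(1)\qquad(t\in I).
$$
Taking the supremum over $t$ and distinguishing the dangerous times, where $m(t)\ge\alpha/2$ and hence $m'(t)\ge m(t)-c_0\alpha/8\ge\alpha/4$ so that $m(t)-c_0 m'(t)\le J(v)-c_0\alpha/4$, from the remaining times, where $m(t)<\alpha/2\le J(v)-c_0\alpha/4$ already, one obtains $\limsup_{k\to\infty}J(v_k)\le J(v)-\beta$ with $\beta:=c_0\alpha/4>0$, which depends only on $\alpha$ and $I\times\Omega$.

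The main obstacle is the classical heart of convex integration: producing compactly supported, highly oscillatory perturbations that solve the linear Euler--Reynolds equations \emph{exactly} (Beltrami-type building blocks plus a corrector) while recovering in the weak limit a fixed fraction of the gap $\rho$ --- all imported verbatim from \cite{DeLellisSzekelyhidi10}, legitimately so because $I\times\Omega$ stays away from $t=0$. Beyond that, what must be checked here is purely bookkeeping: that the strict inequality $e_{\text{sub}}(v_k,S+T_k)<e$ survives (it does, since the corrector error is $O(1/\mu_k)$ and $\rho$ is bounded below on the relevant region); that $d(v_k,v)\to 0$ in the weighted weak metric (reduces to time-uniform $w_k\rightharpoonup 0$, which holds as $\|w_k\|_{L^\infty_tL^2}$ is bounded and $w_k$ vanishes near $t=0$); and that the energy gain is uniform over the supremum in $t\in I$, arranged by choosing $\Omega'$ to capture essentially all of the mass of $\rho(t,\cdot)$ at near-maximising times.
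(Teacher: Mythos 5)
Your proposal is correct and follows the same strategy as the paper, which simply cites \cite[Proposition 4.5]{DeLellisSzekelyhidi10} and observes that since $I\times\Omega\subset\subset\Omega_{\mathrm{tur}}$ is bounded away from $t=0$, the weight in $C_{\log t}L^2$ plays no role and the De Lellis--Sz\'ekelyhidi argument goes through verbatim. You have correctly identified all the points requiring verification (compact support inside the cylinder so that conditions \ref{sub:ii} and the initial datum are untouched, the strict inequality surviving the corrector, convergence in the weighted weak metric reducing to time-uniform weak convergence, and uniformity of the energy gain over the supremum in $t$ via the shrinkage to $\Omega'$), and the quantitative bookkeeping at the end, yielding $\beta=c_0\alpha/4$, is sound.
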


\begin{cor}\label{cor:XJ0}
It holds
$$
X_J\subset J^{-1}(0).
$$
\end{cor}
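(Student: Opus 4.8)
The plan is to argue by contradiction, running the standard Baire-category iteration underlying the convex integration scheme. Suppose, for contradiction, that $v\in X_J$ but $J(v)=\alpha>0$. Since $X$ is by definition the $d$-closure of $X_0$ and $J$ is continuous at $v$, the first step is to pick a sequence $(w^{(n)})\subset X_0$ with $d(w^{(n)},v)<1/n$ and $J(w^{(n)})\to J(v)=\alpha$. In particular $J(w^{(n)})\geq\alpha/2$ for all $n$ large enough, so the Perturbation property applies to each such $w^{(n)}$ with \emph{one and the same} constant $\beta:=\beta(\alpha/2,I\times\Omega)>0$.

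Next, for each large $n$ the Perturbation property yields a sequence $(w^{(n)}_k)\subset X_0$ with $w^{(n)}_k\to w^{(n)}$ and $\limsup_k J(w^{(n)}_k)\leq J(w^{(n)})-\beta$. Since $J(w^{(n)})\to\alpha$, we have $J(w^{(n)})<\alpha+\beta/2$ for $n$ large, hence $\limsup_k J(w^{(n)}_k)<\alpha-\beta/2$. I would then diagonalize: choose an index $k(n)$ so large that $u_n:=w^{(n)}_{k(n)}\in X_0$ satisfies both $d(u_n,w^{(n)})<1/n$ and $J(u_n)<\alpha-\beta/2$.

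By the triangle inequality $d(u_n,v)\leq d(u_n,w^{(n)})+d(w^{(n)},v)<2/n\to 0$, so $u_n\to v$ in $X$. On the one hand, continuity of $J$ at $v\in X_J$ forces $J(u_n)\to J(v)=\alpha$; on the other hand $J(u_n)<\alpha-\beta/2$ for all large $n$, so $\limsup_n J(u_n)\leq\alpha-\beta/2<\alpha$. This contradiction shows $J(v)=0$, i.e. $X_J\subset J^{-1}(0)$, which is the assertion of Corollary \ref{cor:XJ0}.

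The only delicate point — and the reason the argument closes — is the uniformity of $\beta$ over all $w\in X_0$ with $J(w)\geq\alpha/2$, which is exactly what the Perturbation property provides: it lets one fix a single gap $\beta$ while approximating $v$ arbitrarily well in $X_0$. Everything else is routine metric-space bookkeeping. Combined with Lemma \ref{lemma:residual} (that $X_J$ is a countable intersection of open dense sets, hence residual in the complete metric space $X$), this Corollary shows via Lemma \ref{lemma:J} that a generic $v\in X$ satisfies $S=v\ocircle v$ throughout $I\times\Omega$; exhausting $\Omega_{\text{tur}}$ by countably many such cylinders and intersecting the corresponding residual sets then yields the infinitely many weak solutions of Theorem \ref{thm:main}.
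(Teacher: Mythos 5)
Your proof is correct and follows essentially the same contradiction-plus-diagonalization argument as the paper's, with the only difference being that you spell out the choice of the uniform gap $\beta=\beta(\alpha/2,I\times\Omega)$ and the diagonal index $k(n)$ more explicitly than the paper does. (You also avoid a small notational slip in the published proof, where $\alpha$ is simultaneously declared to be $2J(v)$ and $\tfrac12 J(v)$.)
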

\begin{proof}
We prove it by contradiction. Let us suppose that there exists $v\in X_J$ with $J(v)>0$, and take $\alpha=2J(v)$. Since $X=\bar{X}_0$, there exists a sequence $(v_j)\subset X_0$ converging to $v$. Since $J$ is continuous at $v$, we may a assume (by taking a subsequence if necessary) that $J(v_j)\geq\frac{1}{2}J(v)=\alpha>0$ for all indices $j$. By the perturbation property, for each $v_j$ there exists a sequence $(v_{j,k})\subset X_0$ converging to $v_j$ and satisfying
$$
\limsup_{k\to\infty}J(v_{j,k})\leq J(v_j) - \beta(\alpha,I\times\Omega).
$$
Since $X$ is a complete metric space, we can construct a diagonal sequence $v_{j,k(k)}\to v$ satisfying $J(v_{j,k(k)})\nrightarrow J(v)$, which contradicts $v\in X_J$.
\end{proof}

\subsection{Proof of Theorem \ref{thm:main}}

Firstly, we consider a covering $\{I_j\times\Omega_j\}$ of $\Omega_{\text{tur}}$, and the corresponding functionals $\{J_j\}$. By Lemma \ref{lemma:residual} and Corollary \ref{cor:XJ0}, the set
$$
X_{\text{sol}}=\bigcap_{j}J_j^{-1}(0)
$$
contains a countable intersection of open and dense sets in $X$. Since $X$ is a complete metric space, the Baire category theorem implies that $X_{\text{sol}}$ is dense. 

Let $v\in X_{\text{sol}}$. 
On the one hand, recall that the conditions \ref{sub:i} and \ref{sub:ii} are already satisfied by Lemma \ref{lemma:X}. In particular, $v=\bar{v}$ is analytic outside $\Omega_{\text{tur}}$.
On the other hand, we have 
$$
\frac{1}{2}|v|^2=e.
$$
Hence, by \eqref{eq:trueenergy}, we have
$$
E_v(t)
=\frac{1}{2}\int_{\R^2}|v(t,x)|^2\dif x
=\int_{\R^2}e(t,x)\dif x
=E(t).
$$
Since $v\in C_{\log t}L_{\text{weak}}^2$ and $|v|^2=2e\in C_{\log t}L^1$, we also have $v\in C_{\log t}L^2$.
Moreover, by Lemma \ref{lemma:J}, we have $S=v\ocircle v$ in $\Omega_{\text{tur}}$. Hence, $v$ is a weak solution to the Euler equation with pressure
$$
p=\bar{\pi}-\frac{2}{3}e.
$$
We remark that the pressure is also analytic outside $\Omega_{\text{tur}}$. Moreover, since the voriticty is continuous by Lemma \ref{lem:wprofileintegralconditions}, it is not difficult to check that the pressure is indeed continuous in $\R^3$ for positive times. 

Finally, we check that $v\in C([0,T],L^{2^-})$. Let us fix $1<p<2$.
On the one hand, the subsolution already satisfies $\bar{v}\in C([0,T],L^{p})$. 
Since $v=\bar{v}$ outside $\Omega_{\mathrm{tur}}\subset [0,T]\times B_R$ for some $R>0$, we also have $v\in C([0,T],L^{p}(\R^3\setminus B_R))$.
On the other hand, $v\in C([t_0,T],L^2(B_R))\subset C([t_0,T],L^p(B_R))$ for any $t_0>0$. Therefore, it remains to check the continuity at $t=0$. Since
$$
\|(v-\bar{v})(t)\|_{L^p}^p
=\int_{\Omega_{\text{tur}}(t)}
|(v-\bar{v})(t,x)|^p\dif x
\lesssim
c(t)^{2-p},
$$
we conclude that
$$
\|v(t)-v^\circ\|_{L^p}
\leq
\|(v-\bar{v})(t)\|_{L^p}
+\|\bar{v}(t)-v^\circ\|_{L^p}
\to 0,
$$
as $t\to 0$.

\appendix

\section{Axisymmetric without swirl Biot-Savart law}\label{sec:ABS}

In this section we recall the proof of Lemma \ref{lemma:BSlaw}.
Firstly, the Biot-Savart law in $\R^3$ is given by
$$
\bar{v}(t,x)
=\int_{\R^3}K_{3}(x-x')\bar{\omega}(t,x')\dif x'
=-\frac{1}{4\pi}\int_{\R^3}\frac{(x-x')\times\bar{\omega}(t,x')}{|x-x'|^3}\dif x'.
$$
By writing
$$x-x'=(re^{i\theta}-r'e^{i\theta'},z-z')
,$$
and
$$\bar{\omega}(t,x)
=\bar{\omega}_\theta(t,r,z)e_\theta
,$$
we deduce that
\begin{align*}
(x-x')\times e_{\theta'}
&=
\left|\begin{array}{ccc}
e_1 & e_2 & e_3 \\
r\cos\theta-r'\cos\theta'
& r\sin\theta-r'\sin\theta'
& z-z' \\
-\sin\theta' & \cos\theta' & 0
\end{array}\right|\\
&=-(z-z')e_{r'}
+(r\cos(\theta-\theta')-r')e_z,
\end{align*}
and also ($\varphi=\theta'-\theta$)
$$
|x-x'|^2
=|re^{i(\theta-\theta')}-r'|^2+|z-z'|^2
=rr'(2(1-\cos\varphi)+s),
$$
where
$$\bar{\zeta}=\frac{\zeta-\zeta'}{\sqrt{rr'}},
\quad\quad
s=|\bar{\zeta}|^2.$$
On the one hand, we have
\begin{align*}
&\int_0^{2\pi}
\int_{\mathbb{H}}\bar{\omega}_\theta\frac{(z-z')}{|x-x'|^3}e^{i\theta'} r'\dif (r',z')\dif\theta'\\
&=e^{i\theta}
\int_{\mathbb{H}}
\frac{(z-z')r'}{(rr')^{\nicefrac{3}{2}}}\bar{\omega}_\theta
\int_0^{2\pi}
\frac{e^{i\varphi}}{(2(1-\cos\varphi)+s)^{3/2}}\dif\varphi\dif (r',z').
\end{align*}
Thus,
\begin{align*}
\bar{v}_r
=\frac{1}{2\pi r}\int_{\mathbb{H}}
\frac{z-z'}{\sqrt{rr'}}
G\bar{\omega}_\theta\dif\zeta',
\end{align*}
where
\begin{equation}\label{eq:GBiotSavart}
G(s)
=\int_0^\pi
\frac{\cos\varphi}{(2(1-\cos\varphi)+s)^{\nicefrac{3}{2}}}\dif\varphi.
\end{equation}
On the other hand, we have
\begin{align*}
\int_{\mathbb{H}}&
\frac{r'}{(rr')^{\nicefrac{3}{2}}}
\bar{\omega}_\theta
\int_0^{2\pi}
\frac{r\cos\varphi-r'}{(2(1-\cos\varphi)+s)^{\nicefrac{3}{2}}}
\dif\varphi
\dif(r',z')\\
=&\int_{\mathbb{H}}
\frac{(r-r')r'}{(rr')^{\nicefrac{3}{2}}}
\bar{\omega}_\theta
\int_0^{2\pi}
\frac{\cos\varphi}{(2(1-\cos\varphi)+s)^{\nicefrac{3}{2}}}
\dif\varphi
\dif(r',z')\\
&-\int_{\mathbb{H}}
\frac{(r')^2}{(rr')^{\nicefrac{3}{2}}}
\bar{\omega}_\theta
\int_0^{2\pi}
\frac{1-\cos\varphi}{(2(1-\cos\varphi)+s)^{\nicefrac{3}{2}}}
\dif\varphi
\dif(r',z'),
\end{align*}
and thus
$$
\bar{v}_z
=-\frac{1}{2\pi r}\int_{\mathbb{H}}
\frac{r-r'}{\sqrt{rr'}}
G\bar{\omega}_\theta\dif\zeta'
+\frac{1}{2\pi r}\int_{\mathbb{H}}
\sqrt{\frac{r'}{r}}
H\bar{\omega}_\theta\dif\zeta',
$$
where
\begin{equation}\label{eq:HBiotSavart}
H(s)
=\int_0^\pi
\frac{1-\cos\varphi}{(2(1-\cos\varphi)+s)^{\nicefrac{3}{2}}}\dif\varphi.
\end{equation}
Therefore, we can write compactly
$$
\bar{v}=
-\frac{i}{2\pi r}
\int_{\mathbb{H}}
\bar{\zeta} G\bar{\omega}_\theta\dif\zeta'
+\frac{i}{2\pi r}
\int_{\mathbb{H}}
\sqrt{\frac{r'}{r}}
H\bar{\omega}_\theta\dif\zeta'.$$
It can be checked using elliptic integrals that this formula agrees with the one in \cite{FengSverak15}.

\begin{lemma}\label{lemma:Gestimates}
The function $G(s)$ defined in \eqref{eq:GBiotSavart} satisfies 
$$G(s)=\frac{1}{s}+O(\log s),$$
as $s\to 0$.
\end{lemma}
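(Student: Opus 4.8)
The plan is to analyze the integral
$$
G(s)=\int_0^\pi\frac{\cos\varphi}{\bigl(2(1-\cos\varphi)+s\bigr)^{3/2}}\dif\varphi
$$
by localizing the singularity at $\varphi=0$, since for $s\to 0$ the integrand blows up only near $\varphi=0$, where $2(1-\cos\varphi)\approx\varphi^2$. First I would split the domain as $[0,\pi]=[0,\delta]\cup[\delta,\pi]$ for some fixed small $\delta$. On $[\delta,\pi]$ the denominator is bounded below by a positive constant uniformly in $s\in[0,1]$, so that piece contributes $O(1)$; in fact it converges to a finite constant as $s\to 0$. The whole asymptotic therefore comes from the inner piece $[0,\delta]$.

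On $[0,\delta]$ I would write $\cos\varphi=1-(1-\cos\varphi)$, so the integrand becomes
$$
\frac{1}{\bigl(2(1-\cos\varphi)+s\bigr)^{3/2}}-\frac{1-\cos\varphi}{\bigl(2(1-\cos\varphi)+s\bigr)^{3/2}}.
$$
The second term is exactly (minus) the integrand of $H$ restricted to $[0,\delta]$, and since $1-\cos\varphi\lesssim\varphi^2\lesssim 2(1-\cos\varphi)+s$, it is bounded by $(2(1-\cos\varphi)+s)^{-1/2}\lesssim\varphi^{-1}$... which is not integrable, so I need to be slightly more careful: using $1-\cos\varphi\lesssim \varphi^2$ and $2(1-\cos\varphi)+s\gtrsim\varphi^2$ only gives a logarithm. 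Indeed $\int_0^\delta\varphi^2(\varphi^2+s)^{-3/2}\dif\varphi$ is $O(\log s)$ (substitute $\varphi=\sqrt s\,u$; it behaves like $\int^{ \delta/\sqrt s} u^2(u^2+1)^{-3/2}\dif u\sim\log(1/\sqrt s)$). So the $H$-type term contributes $O(\log s)$, which is absorbed into the error. For the first term, I would use the change of variables exploiting that $2(1-\cos\varphi)=4\sin^2(\varphi/2)$, so that with $u=2\sin(\varphi/2)$ the denominator is exactly $(u^2+s)^{3/2}$ and $\dif\varphi=\dif u/\sqrt{1-u^2/4}=\dif u\,(1+O(u^2))$. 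Thus
$$
\int_0^\delta\frac{\dif\varphi}{\bigl(4\sin^2(\varphi/2)+s\bigr)^{3/2}}=\int_0^{2\sin(\delta/2)}\frac{\dif u}{(u^2+s)^{3/2}}+\int_0^{2\sin(\delta/2)}\frac{O(u^2)\,\dif u}{(u^2+s)^{3/2}},
$$
where again the $O(u^2)$ correction term is $O(\log s)$ by the same scaling argument. The leading integral is elementary: $\int_0^a(u^2+s)^{-3/2}\dif u=\frac{1}{s}\cdot\frac{a}{\sqrt{a^2+s}}=\frac1s+O(1)$ as $s\to0$ (for fixed $a>0$). Combining, $G(s)=\frac1s+O(\log s)$.

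The main obstacle — really the only subtlety — is bookkeeping the error terms to confirm they are genuinely $O(\log s)$ and not larger: the naive bound $(2(1-\cos\varphi)+s)^{-1/2}\le\varphi^{-1}$ would suggest a divergence, so one must keep the extra factor $1-\cos\varphi\sim\varphi^2$ in the numerator (or $O(u^2)$ after substitution) and then the scaling $\varphi=\sqrt s\,u$ shows the integral is $\sim\log(1/\sqrt s)$. Everything else is a routine computation of an elementary antiderivative plus an observation that the integrand is harmless away from $\varphi=0$. I would present it as: (1) split off $[\delta,\pi]$ as $O(1)$; (2) substitute $u=2\sin(\varphi/2)$; (3) isolate the $\int(u^2+s)^{-3/2}$ piece and compute it exactly; (4) bound all remaining pieces by $\int_0^\delta\varphi^2(\varphi^2+s)^{-3/2}\dif\varphi=O(\log s)$ via rescaling.
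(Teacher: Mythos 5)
Your proposal is correct and reaches the same conclusion as the paper, but by a somewhat different and arguably cleaner route. Where the paper Taylor-expands $\cos\varphi = 1 - \tfrac{\varphi^2}{2} + O(\varphi^4)$ in the numerator and then compares the denominator $(2(1-\cos\varphi)+s)^{3/2}$ with $(\varphi^2+s)^{3/2}$ by writing out a cubic difference (which requires bookkeeping a ratio of the form $\tfrac{b^3-a^3}{a^{3/2}b^{3/2}(a^{3/2}+b^{3/2})}$ together with the two-sided bound $C_1\varphi^2\le 1-\cos\varphi\le C_2\varphi^2$), you sidestep the denominator comparison entirely via the exact substitution $u = 2\sin(\varphi/2)$, which turns $2(1-\cos\varphi)$ into $u^2$ on the nose. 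The price is a Jacobian factor $(1-u^2/4)^{-1/2} = 1 + O(u^2)$, but on $[0,\delta]$ this is a benign multiplicative correction rather than a subtractive one, and its contribution is directly seen to be $O(\log s)$ by the same rescaling $u = \sqrt{s}\,w$ that you invoke. You also treat the numerator differently: rather than Taylor-expanding $\cos\varphi$ to fourth order, you write $\cos\varphi = 1 - (1-\cos\varphi)$ exactly, identifying the correction term with the $H$-integrand, which gives $O(\log s)$ by the same scaling (this also makes the connection to Lemma~\ref{lemma:Hestimates} visible). Both arguments rest on the same elementary antiderivative $\int_0^a (u^2+s)^{-3/2}\dif u = \tfrac{a}{s\sqrt{a^2+s}} = \tfrac1s + O(1)$ for the leading piece. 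The paper integrates over all of $[0,\pi]$ and deals with the region $\varphi\ge\pi/2$ inside the error estimate, while you split the interval at a fixed $\delta$ up front — a cosmetic difference. Overall your substitution trick trims the messiest step of the paper's version.
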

\begin{proof}
We make a Taylor expansion of the cosine in the numerator to obtain
$$
\begin{aligned}
G(s)=&\int_0^\pi
\frac{1}{(2(1-\cos\varphi)+s)^{\nicefrac{3}{2}}}\dif\varphi
-\frac{1}{2}\int_0^\pi
\frac{\varphi^2}{(2(1-\cos\varphi)+s)^{\nicefrac{3}{2}}}\dif\varphi\\
&+\int_0^\pi
\frac{O(\varphi^4)}{(2(1-\cos\varphi)+s)^{\nicefrac{3}{2}}}\dif\varphi
= G_1+G_2+G_3.
\end{aligned}$$
We further split $G_1$ in the following way

$$
\begin{aligned}
G_1(s)&=\int_0^\pi
\frac{1}{(\varphi^2+s)^{\nicefrac{3}{2}}}\dif\varphi+\int_0^\pi
\frac{1}{(2(1-\cos\varphi)+s)^{\nicefrac{3}{2}}}-\frac{1}{(\varphi^2+s)^{\nicefrac{3}{2}}}\dif\varphi.
\end{aligned}$$
The first integral can be computed exactly and is equal to 
$$\int_0^\pi
\frac{1}{(\varphi^2+s)^{\nicefrac{3}{2}}}\dif\varphi=\frac{\pi}{s\sqrt{s+\pi^2}}
=\frac{1}{s}+O(1).$$
For the second integral is equal to
$$
\int_0^\pi
\frac{(\varphi^2+s)^3-(2(1-\cos(\varphi))+s)^3}{(2(1-\cos\varphi)+s)^{\nicefrac{3}{2}}(\varphi^2+s)^{\nicefrac{3}{2}}((2(1-\cos\varphi)+s)^{\nicefrac{3}{2}}+(\varphi^2+s)^{\nicefrac{3}{2}})}\dif\varphi.
$$
Note that there exist real constants $C_1,C_2>0$ such that $C_1\varphi^2\leq 1-\cos(\varphi)\leq C_2\varphi^2$ for $0\leq \varphi \leq \pi/2$. For $\pi/2\leq \varphi\leq \pi$, the integrand is bounded. Therefore, in the last integral we can cancel the $s^3$ terms and bound the absolute value of the integral by 
$$
\int_0^{\nicefrac{\pi}{2}}\frac{O(\varphi^4)O((\varphi^2+s)^2)}{(\varphi^2+s)^{\nicefrac{9}{2}}}d\varphi+O(1)\lesssim 
\int_0^{\nicefrac{\pi}{2}}\frac{d\varphi}{(\varphi^2+s)^{\nicefrac{1}{2}}}+O(1)=O(\log s).
$$
The same idea let us bound 
$$|G_2|\leq O(\log s), \quad |G_3|\leq O(1).$$
\end{proof}

\begin{lemma}\label{lemma:Hestimates}
The function $H(s)$ defined in \eqref{eq:HBiotSavart} satisfies 
$$H(s)=-\frac{1}{4}\log s+O(1),$$
as $s\to 0$.
\end{lemma}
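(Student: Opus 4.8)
The plan is to mirror the proof of Lemma \ref{lemma:Gestimates} for $H(s)$, exploiting the fact that $H$ has only a logarithmic singularity rather than an algebraic one. First I would record the elementary comparison $C_1\varphi^2\leq 1-\cos\varphi\leq C_2\varphi^2$ on $[0,\pi/2]$, with the integrand manifestly bounded on $[\pi/2,\pi]$, so that the whole behavior as $s\to 0$ comes from the region near $\varphi=0$.

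The key step is to replace $2(1-\cos\varphi)$ by its leading term $\varphi^2$. Write
\begin{equation}\label{eq:Hsplit}
H(s)=\int_0^\pi\frac{\varphi^2}{(\varphi^2+s)^{\nicefrac{3}{2}}}\dif\varphi
+\int_0^\pi\left(\frac{1-\cos\varphi}{(2(1-\cos\varphi)+s)^{\nicefrac{3}{2}}}-\frac{\varphi^2/2}{(\varphi^2+s)^{\nicefrac{3}{2}}}\right)\dif\varphi.
\end{equation}
For the first (explicit) integral I would substitute $\varphi=\sqrt{s}\,u$ to get $\int_0^{\pi/\sqrt s}u^2(u^2+1)^{-3/2}\dif u$, and note that $\int u^2(u^2+1)^{-3/2}\dif u=\arcsinh(u)-u(u^2+1)^{-1/2}$; evaluating at the upper limit gives $\log(1/\sqrt s)+O(1)=-\tfrac12\log s+O(1)$. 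Combining with the factor $1/2$ implicit in the decomposition (I would in fact split off $\tfrac12\int_0^\pi \varphi^2(\varphi^2+s)^{-3/2}\dif\varphi$, not $\int_0^\pi$) yields the claimed coefficient $-\tfrac14\log s$. For the remainder integral, using $2(1-\cos\varphi)=\varphi^2+O(\varphi^4)$ near $0$, a Taylor/telescoping estimate of the numerator difference — exactly as in the second integral of $G_1$ in Lemma \ref{lemma:Gestimates} — bounds it by $\int_0^{\pi/2}O(\varphi^4)(\varphi^2+s)^{-3/2}\dif\varphi+O(1)\lesssim\int_0^{\pi/2}\varphi\dif\varphi+O(1)=O(1)$.

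The main obstacle, such as it is, is bookkeeping the constants in the split of \eqref{eq:Hsplit} so that the $1/2$ in $1-\cos\varphi\sim\varphi^2/2$ produces precisely $-\tfrac14\log s$ rather than $-\tfrac12\log s$; everything else is a routine repetition of the comparison and scaling arguments already used for $G$. I would therefore state the decomposition as $H=\tfrac12\int_0^\pi\varphi^2(\varphi^2+s)^{-3/2}\dif\varphi + (\text{error})$, compute the first term exactly via the substitution $\varphi=\sqrt s\,u$, and invoke the same majorization as in Lemma \ref{lemma:Gestimates} to dispatch the error term, concluding $H(s)=-\tfrac14\log s+O(1)$.
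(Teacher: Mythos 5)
Your proposal is correct and follows essentially the same route as the paper's proof: Taylor-expand $1-\cos\varphi=\tfrac12\varphi^2+O(\varphi^4)$, reduce to the model integral $\tfrac12\int_0^\pi\varphi^2(\varphi^2+s)^{-3/2}\dif\varphi$, evaluate it explicitly via the $\arcsinh$ antiderivative to obtain $-\tfrac12\log s+O(1)$, and bound the remainders by the same comparison used for $G$. The only caveat is the missing factor $\tfrac12$ in your displayed split, which you noticed and corrected in the surrounding text.
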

\begin{proof}
We follow the same strategy than in Lemma \ref{lemma:Gestimates}. We can split
$$H(s)=\frac{1}{2}\int_0^\pi\frac{\varphi^2}{(2(1-\cos \varphi)+s)^{\nicefrac{3}{2}}}d\varphi+\int_0^\pi\frac{O(\varphi^4)}{(2(1-\cos \varphi)+s)^{\nicefrac{3}{2}}}d\varphi.$$
We split the first integral into
$$\int_0^\pi\frac{\varphi^2}{(2(1-\cos \varphi)+s)^{\nicefrac{3}{2}}}d\varphi
=\int_0^\pi\frac{\varphi^2}{(\varphi^2+s)^{\nicefrac{3}{2}}}d\varphi
+\int_0^\pi\frac{\varphi^2}{(2(1-\cos \varphi)+s)^{\nicefrac{3}{2}}}-\frac{\varphi^2}{(\varphi^2+s)^{\nicefrac{3}{2}}}d\varphi.$$
The first part can be computed as
$$\int_0^\pi\frac{\varphi^2}{(\varphi^2+s)^{\nicefrac{3}{2}}}d\varphi=\arcsinh\big(\frac{\pi}{\sqrt{s}}\big)-\frac{\pi}{\sqrt{s+\pi^2}}=-\frac{1}{2}\log s+O(1).$$
The remainder terms are bounded similarly.
\end{proof}

\section{Antidivergence operator}\label{section:antidivergence}
In this section, we construct and give estimates for the antidivergence operator. We follow the construction of \cite[Section 10]{IsettOh16}. 

\begin{lemma}\label{lem:antidivergence}
Let $c<1$ and $f$ be a smooth vector field supported on a ball $B(x_0,c)$ such that
\begin{equation}\label{conditionsantidivergence}
\int f(x)\dif x=0, \quad \int f(x)\cdot (x-x_0)^\perp \dif x=0.
\end{equation}
Then, there exists a symmetric tensor $\mathcal{R}f$ solving 
\begin{equation}\label{divergenceequation}
\mathrm{div}(\mathcal{R}f)=f,
\end{equation}
such that $\mathcal{R}f$ is supported on $B(x_0,c)$ and 
$$\|\mathcal{R}f\|_{L^\infty}\lesssim c\|f\|_{L^\infty}.$$
\end{lemma}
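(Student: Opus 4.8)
The plan is to follow \cite[Section~10]{IsettOh16}: reduce to the unit ball by scaling, construct a symmetric antidivergence by a Fourier series on an ambient torus, and then localise it back onto the ball, at which stage \emph{both} conditions in \eqref{conditionsantidivergence} are used. For the reduction, set $\tilde f(y)=f(x_0+cy)$, supported in $B(0,1)$; then $\int\tilde f\dif y=c^{-2}\int f\dif x=0$ and $\int\tilde f(y)\cdot y^\perp\dif y=c^{-3}\int f(x)\cdot(x-x_0)^\perp\dif x=0$, so it is enough to produce $\mathcal R\tilde f$ symmetric with $\mathrm{div}(\mathcal R\tilde f)=\tilde f$, supported in $B(0,1)$ and $\|\mathcal R\tilde f\|_{L^\infty}\lesssim\|\tilde f\|_{L^\infty}$; then $(\mathcal Rf)(x):=c\,(\mathcal R\tilde f)((x-x_0)/c)$ satisfies $\mathrm{div}(\mathcal Rf)=f$, is supported in $B(x_0,c)$, and $\|\mathcal Rf\|_{L^\infty}=c\,\|\mathcal R\tilde f\|_{L^\infty}\lesssim c\,\|f\|_{L^\infty}$. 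So from now on $c=1$, $x_0=0$.

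Next I would build a periodic solution. Embed $B(0,1)$ in a flat torus $\T^2_\ell=\R^2/(\ell\Z)^2$ with $\ell>2$, extend $f$ by zero, and expand $f=\sum_{k\neq0}\hat f_ke^{ik\cdot x}$ — the $k=0$ mode is absent since $\int f=0$. For each $k\neq0$ let $v_k=-i\hat f_k$ and
\begin{equation*}
A_k=\frac{v_k\otimes k+k\otimes v_k}{|k|^2}-\frac{v_k\cdot k}{|k|^4}\,k\otimes k,
\end{equation*}
a symmetric matrix with $A_kk=v_k$ and $|A_k|\lesssim|\hat f_k|/|k|$, so that $\mathrm{div}(A_ke^{ik\cdot x})=iA_kk\,e^{ik\cdot x}=\hat f_ke^{ik\cdot x}$. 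Then $R_{\mathrm{per}}:=\sum_{k\neq0}A_ke^{ik\cdot x}$ is a smooth symmetric tensor on $\T^2_\ell$ with $\mathrm{div}\,R_{\mathrm{per}}=f$. The symbol $k\mapsto A_k$ is smooth off the origin and homogeneous of degree $-1$, so the convolution kernel representing $f\mapsto R_{\mathrm{per}}$ on $\T^2_\ell$ is dominated by $|z|^{-1}$, which is locally integrable in two dimensions; hence $\|R_{\mathrm{per}}\|_{L^\infty(\T^2_\ell)}\lesssim\|f\|_{L^\infty}$. This two-dimensional gain, composed with the rescaling above, is exactly what produces the factor $c$ and is why no logarithm appears.

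The crux is to localise $R_{\mathrm{per}}$ to $B(0,1)$. Since $f\equiv0$ off $B(0,1)$, there $R_{\mathrm{per}}$ is symmetric and divergence-free, and its net force and net moment around the hole are, by the divergence theorem, $\int_{B(0,1)}f\dif x$ and $\int_{B(0,1)}f\cdot x^\perp\dif x$ respectively — the moment identity using that $\nabla(x^\perp)$ is antisymmetric while $R_{\mathrm{per}}$ is symmetric, so the boundary term $R_{\mathrm{per}}:\nabla(x^\perp)$ vanishes. Both integrals vanish by \eqref{conditionsantidivergence}. By the classical fact that a planar, self-equilibrated symmetric stress field admits a single-valued Airy potential, $R_{\mathrm{per}}|_{\T^2_\ell\setminus B(0,1)}$ extends to a divergence-free symmetric tensor $\tilde R$ on all of $\T^2_\ell$, and the extension can be done quantitatively — integrate $R_{\mathrm{per}}$ twice to get a potential $\Phi$, extend the $2$-jet of $\Phi$ across $B(0,1)$, and let $\tilde R$ be the associated Hessian-type tensor — with $\|\tilde R\|_{L^\infty}\lesssim\|R_{\mathrm{per}}\|_{C^0}\lesssim\|f\|_{L^\infty}$. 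Setting $\mathcal Rf:=R_{\mathrm{per}}-\tilde R$ then gives a symmetric tensor with $\mathrm{div}(\mathcal Rf)=f-0=f$, vanishing on $\T^2_\ell\setminus B(0,1)$, and $\|\mathcal Rf\|_{L^\infty}\le\|R_{\mathrm{per}}\|_{L^\infty}+\|\tilde R\|_{L^\infty}\lesssim\|f\|_{L^\infty}$; undoing the rescaling then finishes the proof.

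The main obstacle is precisely this last step. Each of the three requirements in isolation — support inside $B(x_0,c)$, symmetry of $\mathcal Rf$, and the $L^\infty$ bound with one power of $c$ gained — is easy (the Bogovskii operator gives the first and third; the Fourier multiplier gives the second and third), but securing all three at once forces the use of \emph{both} compatibility conditions, which appear here transparently as the force/moment obstructions to propagating the symmetric periodic solution across its own support. The tempting shortcut of symmetrising a Bogovskii solution fails at the $L^\infty$ endpoint, since the symmetrising correction is a zeroth-order operator and hence not bounded on $L^\infty$; keeping the whole construction at order $-1$, via the Airy potential, is what avoids this loss. The only other point needing (routine) attention is the homology of $\T^2_\ell$, which contributes merely a globally defined constant symmetric tensor and therefore no obstruction.
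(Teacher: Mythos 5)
Your argument takes a genuinely different route from the paper's. The paper follows Isett--Oh quite literally: it Taylor--expands $\hat f(\xi)$ about $\xi=0$, uses the two conditions in \eqref{conditionsantidivergence} to kill the zeroth--order term $\hat f(0)$ and the antisymmetric part $(\partial_k\hat f^l-\partial_l\hat f^k)(0)$ of the first--order term, inverts the Fourier transform to obtain an explicit symmetric tensor written as a superposition of dilations of $f$ (hence automatically supported in $B(x_0,c)$), and then removes the base--point singularity by conjugating with a translation and mollifying over $y\in B(x_0,c)$ with a mollifier scaled to $c$; the one power of $c$ comes out of the final estimate. Your construction --- rescale to the unit ball, build a symmetric antidivergence by a Fourier--multiplier on an ambient torus, and then peel it off $B$ by subtracting a divergence--free symmetric extension $\tilde R$ constructed from an Airy potential --- is elegant, and in particular it makes the role of the two conditions transparent as the vanishing of the net force and moment of the stress $R_{\mathrm{per}}$ around the hole. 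Your reduction by scaling, the multiplier $A_k$, the kernel bound $\|R_{\mathrm{per}}\|_{L^\infty}\lesssim\|f\|_{L^\infty}$ (degree $-1$ kernel in $2$D is locally integrable), and the force/moment computation are all correct.

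The weak link is the quantitative step ``the extension can be done quantitatively \dots with $\|\tilde R\|_{L^\infty}\lesssim\|R_{\mathrm{per}}\|_{C^0}$.'' As written this is not a theorem: extending the $2$--jet of $\Phi$ from $\partial B$ into $B$ with a bound on $\|D^2\tilde\Phi\|_{L^\infty(B)}$ purely in terms of $\|D^2\Phi\|_{L^\infty(\partial B)}$ is not something a Whitney--type or biharmonic extension gives you, because an $L^\infty$ bound on the highest derivatives on the boundary does not control an $L^\infty$ extension of them without some modulus of continuity. The gap is fillable, but you have to go through H\"older regularity: since the multiplier is of order $-1$, $R_{\mathrm{per}}$ is in fact $C^{0,\alpha}$ with $\|R_{\mathrm{per}}\|_{C^{0,\alpha}}\lesssim\|f\|_{L^\infty}$ (via $L^p\to W^{1,p}\hookrightarrow C^{0,\alpha}$ for $p>2$ and the compact support of $f$), so the normalized Airy potential lies in $C^{2,\alpha}$ near $\partial B$, and Schauder theory for the biharmonic Dirichlet problem (or a $C^{2,\alpha}$ Whitney extension) yields $\|D^2\tilde\Phi\|_{L^\infty(B)}\lesssim\|\Phi\|_{C^{2,\alpha}}\lesssim\|f\|_{L^\infty}$. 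You should also note explicitly that the extension must match \emph{both} $\Phi$ and $\partial_\nu\Phi$ on $\partial B$; this is what guarantees $\tilde R\,n$ is continuous across $\partial B$ so that $\mathrm{div}\,\tilde R$ has no singular layer and $\mathrm{div}(R_{\mathrm{per}}-\tilde R)=f$ in the sense of distributions. Finally, the remark about the torus homology is a distraction: you never need a single--valued Airy potential on all of $\T^2_\ell\setminus B$, only on a neighborhood of $\partial B$ and then inside the simply connected $B$, so the torus cycles play no role; the only obstruction is the period around $\partial B$, which is exactly the force/moment pair and is killed by \eqref{conditionsantidivergence}. The paper's proof avoids all of this by being completely explicit and elementary, and it also generalizes to higher dimensions where Airy potential theory is less convenient; your route is shorter conceptually but leans on nontrivial elliptic regularity that you should make precise.
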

\begin{proof}
We assume $x_0=0$ for simplicity. Using Einstein convention for summation, we rewrite
$$
\begin{aligned}
\hat{f}^l(\xi)&=\hat{f}^l(0)+\left(\int_0^1 \partial_k \hat{f}^l(\sigma\xi)\dif \sigma\right)\xi_k\\
&=\hat{f}^l(0)+\frac{1}{2}\left(\int_0^1 (\partial_k \hat{f}^l-\partial_l \hat{f}^k)(\sigma\xi)\dif \sigma\right)\xi_k+\frac{1}{2}\left(\int_0^1 (\partial_k \hat{f}^l+\partial_l \hat{f}^k)(\sigma\xi)\dif \sigma\right)\xi_k.
\end{aligned}
$$
We integrate by parts to further rewrite
$$
\begin{aligned}
\frac{1}{2}\left(\int_0^1 (\partial_k \hat{f}^l-\partial_l \hat{f}^k)(\sigma\xi)\dif \sigma\right)\xi_k&=-\frac{1}{2}\left(\int_0^1 \partial_\sigma (1-\sigma)(\partial_k \hat{f}^l-\partial_l \hat{f}^k)(\sigma\xi)\dif \sigma\right)\zeta_k \\
&\hspace{-4.0cm}=-\frac{1}{2} (1-\sigma)(\partial_k \hat{f}^l-\partial_l \hat{f}^k)(\sigma\zeta)\zeta_k\big|_{\sigma=0}^{\sigma=1}+\frac{1}{2}\left(\int_0^1 (1-\sigma)(\partial_j\partial_k \hat{f}^l-\partial_j\partial_l \hat{f}^k)(\sigma\zeta)\dif \sigma\right)\zeta_j\zeta_k\\
&\hspace{-4.0cm}=\frac{1}{2}(\partial_k \hat{f}^l-\partial_l \hat{f}^k)(0)\zeta_k+\frac{1}{2}\left(\int_0^1 (1-\sigma)(\partial_j\partial_k \hat{f}^l-\partial_j\partial_l \hat{f}^k)(\sigma\zeta)\dif \sigma\right)\zeta_j\zeta_k\\
&\hspace{-4.0cm}=\frac{1}{2}(\partial_k \hat{f}^l-\partial_l \hat{f}^k)(0)\zeta_k
+\frac{1}{2}\left(\int_0^1 (1-\sigma)\zeta_k(\partial_j\partial_k \hat{f}^l-\partial_k\partial_l \hat{f}^j)(\sigma\zeta)\dif \sigma\right)\zeta_j\\
&-\left(\int_0^1 (1-\sigma)\zeta_k(\partial_j\partial_l \hat{f}^k)(\sigma\zeta)\dif \sigma\right)\zeta_j.
\end{aligned}
$$
Using \eqref{conditionsantidivergence}, we obtain
$$
\begin{aligned}
\hat{f}^l(\zeta)=&\frac{1}{2}\left(\int_0^1 (\partial_j \hat{f}^l+\partial_l \hat{f}^j)(\sigma\zeta)\dif \sigma\right)\zeta_j\\
&+\frac{1}{2}\left(\int_0^1 (1-\sigma)\zeta_k(\partial_j\partial_k \hat{f}^l-\partial_k\partial_l \hat{f}^j)(\sigma\zeta)\dif \sigma\right)\zeta_j\\
&-\left(\int_0^1 (1-\sigma)\zeta_k(\partial_j\partial_l \hat{f}^k)(\sigma\zeta)\dif \sigma\right)\zeta_j.
\end{aligned}$$
Note that each summand is symmetric in $l,j$. Thus, inverting the Fourier transform the terms in brackets, we obtain a symmetric matrix whose divergence is $f$:

$$
\begin{aligned}
r^{lj}&=r_0^{lj}+r_1^{lj}+r_2^{lj},\\
r_0^{lj}&=-\frac{1}{2}\int_0^1\left(\frac{x_j}{\sigma}f^{l}(\frac{x}{\sigma})+\frac{x_l}{\sigma}f^{j}(\frac{x}{\sigma})\right)\frac{\dif \sigma}{\sigma^2},\\
r_1^{lj}&=\frac{1}{2}\frac{\partial}{\partial x_k}\int_0^1(1-\sigma)\left(\frac{x_jx_k}{\sigma^2}f^{l}(\frac{x}{\sigma})+\frac{x_lx_k}{\sigma^2}f^{j}(\frac{x}{\sigma})\right)\frac{\dif \sigma}{\sigma^2},\\
r_2^{lj}&=-\frac{\partial}{\partial x_k}\int_0^1(1-\sigma)\left(\frac{x_lx_j}{\sigma^2}f^{k}(\frac{x}{\sigma})\right)\frac{\dif \sigma}{\sigma^2}.
\end{aligned}
$$
Since $0\leq\sigma\leq 1,$ it is direct that $r^{lj}$ is supported on the same ball than $f$. The above expressions define a distributional solution to \eqref{divergenceequation}, but there is a singularity at $x=0$. One can move the singularity to a point $y$ conjugating $r^{lj}$ by translation by $y$, obtaining 
$$
\begin{aligned}
^{(y)}r^{lj}&=^{(y)}r_0^{lj}+^{(y)}r_1^{lj}+^{(y)}r_2^{lj},\\
^{(y)}r_0^{lj}&=-\frac{1}{2}\int_0^1\left(\frac{(x-y)_j}{\sigma}f^{l}(\frac{x-y}{\sigma}+y)+\frac{(x-y)_l}{\sigma}f^{j}(\frac{x-y}{\sigma}+y)\right)\frac{\dif \sigma}{\sigma^2},\\
^{(y)}r_1^{lj}&=\frac{1}{2}\frac{\partial}{\partial x_k}\int_0^1(1-\sigma)\left(\frac{(x-y)_j(x-y)_k}{\sigma^2}f^{l}(\frac{x-y}{\sigma}+y)+\frac{(x-y)_l(x-y)_k}{\sigma^2}f^{j}(\frac{x-y}{\sigma}+y)\right)\frac{\dif \sigma}{\sigma^2},\\
^{(y)}r_2^{lj}&=-\frac{\partial}{\partial x_k}\int_0^1(1-\sigma)\left(\frac{(x-y)_l(x-y)_j}{\sigma^2}f^{k}(\frac{x-y}{\sigma}+y)\right)\frac{\dif \sigma}{\sigma^2}.
\end{aligned}
$$
Note that if $y\in B(0,c)$, then $\mathrm{supp}\,r^{lj}\subset B(0,c)$ again. Take a mollifier $\phi(y)$ supported on $B(0,c)$. To remove the singularity in $r^{lj}$, we mollify using $\phi$, obtaining finally the solution
$$R^{lj}(x)=\int \prescript{(y)}{}r^{lj}(x)\phi(y)\dif y.$$
Now we seek a more convenient expression for $R^{lj}$. Denote $z=x-y$. Using that 
$$
\partial_{y_k}(z)=-\partial_{x_k}(z), \quad \partial_{y_k}\big(f^{*}(\frac{z}{\sigma}+y)\big)=-(1-\sigma)\partial_{x_k}\big(f^{*}(\frac{z}{\sigma}+y)\big),
$$
we can rewrite
$$
\begin{aligned}
R_2^{lj}(x)=&-\int\phi(y)\int_0^1(1-\sigma)\partial_{x_k}\big(\frac{z_lz_j}{\sigma^2}\big)f^k(\frac{z}{\sigma}+y)\frac{\dif \sigma}{\sigma^2}\dif y
+\int\phi(y)\int_0^1\frac{z_lz_j}{\sigma^2}\partial_{y_k}\big(f^k(\frac{z}{\sigma}+y)\big)\frac{\dif \sigma}{\sigma^2}\dif y\\
=&-\int\phi(y)\int_0^1(1-\sigma)\partial_{x_k}\big(\frac{z_lz_j}{\sigma^2}\big)f^k(\frac{z}{\sigma}+y)\frac{\dif \sigma}{\sigma^2}\dif y
+\int\phi(y)\partial_{y_k}\int_0^1\frac{z_lz_j}{\sigma^2}\big(f^k(\frac{z}{\sigma}+y)\big)\frac{\dif \sigma}{\sigma^2}\dif y\\
&-\int\phi(y)\int_0^1\partial_{y_k}\big(\frac{z_lz_j}{\sigma^2}\big)\big(f^k(\frac{z}{\sigma}+y)\big)\frac{\dif \sigma}{\sigma^2}\dif y\\
=&\int\phi(y)\int_0^1\partial_{x_k}\big(\frac{z_lz_j}{\sigma}\big)f^k(\frac{z}{\sigma}+y)\frac{\dif \sigma}{\sigma^2}\dif y
-\int\partial_{y_k}\phi(y)\int_0^1\frac{z_lz_j}{\sigma^2}\big(f^k(\frac{z}{\sigma}+y)\big)\frac{\dif \sigma}{\sigma^2}\dif y.
\end{aligned}
$$
Similarly, we can obtain
$$
\begin{aligned}
R_1^{lj}(x)=&-\frac{1}{2}\int\phi(y)\int_0^1\left(\partial_{x_k}\big(\frac{z_jz_k}{\sigma}\big)f^l(\frac{z}{\sigma}+y)+\partial_{x_k}\big(\frac{z_lz_k}{\sigma}\big)f^j(\frac{z}{\sigma}+y)\right)\frac{\dif \sigma}{\sigma^2}\dif y\\
&+\frac{1}{2}\int\partial_{y_k}\phi(y)\int_0^1\left(\frac{z_jz_k}{\sigma^2}f^{l}(\frac{z}{\sigma}+y)+\frac{z_lz_k}{\sigma^2}f^{j}(\frac{z}{\sigma}+y)\right)\frac{\dif \sigma}{\sigma^2}\dif y.
\end{aligned}
$$
Taking into account that 
$$
\partial_{x_k}(z_lz_j)f^k-\frac{1}{2}\big(\partial_{x_k}(z_jz_k)f^l+\partial_{x_k}(z_lz_k)f^j\big)=-\frac{1}{2}(z_jf^l+z_lf^j),
$$
we get the alternative expression
$$
\begin{aligned}
R^{lj}(x)=&-\int\phi(y)\int_0^1\left(\frac{z_j}{\sigma}f^{l}(\frac{z}{\sigma}+y)+\frac{z_l}{\sigma}f^{j}(\frac{z}{\sigma}+y)\right)\frac{\dif \sigma}{\sigma^2}\dif y\\
&+\frac{1}{2}\int\partial_{y_k}\phi(y)\int_0^1\left(\frac{z_jz_k}{\sigma^2}f^{l}(\frac{z}{\sigma}+y)+\frac{z_lz_k}{\sigma^2}f^{j}(\frac{z}{\sigma}+y)\right)\frac{\dif \sigma}{\sigma^2}\dif y\\
&-\int\partial_{y_k}\phi(y)\int_0^1\frac{z_lz_j}{\sigma^2}\big(f^k(\frac{z}{\sigma}+y)\big)\frac{\dif \sigma}{\sigma^2}\dif y=\widetilde{R}_0^{lj}+\widetilde{R}_1^{lj}+\widetilde{R}_2^{lj}.
\end{aligned}
$$
We can estimate for $x\in B(0,c)$
$$
\begin{aligned}
|\widetilde{R}_0^{lj}(x)|&\lesssim\int_{B(0,c)}\int_{\frac{|z|}{c}}^2\frac{|z|}{\sigma}\|\phi\|_{L^\infty}\|f\|_{L^\infty}\frac{\dif \sigma}{\sigma^2}\dif y
\lesssim \|\phi\|_{L^\infty}\|f\|_{L^\infty}\int_{B(0,c)}\big(4-\frac{c^2}{|z|^2}\big)|z|\dif y\\
&\lesssim \|\phi\|_{L^\infty}\|f\|_{L^\infty}\int_{B(0,c)}\big(4+\frac{c^2}{|z|^2}\big)|z|\dif y\lesssim c^3\|\phi\|_{L^\infty}\|f\|_{L^\infty}.
\end{aligned}
$$
Similarly, we can bound $|\widetilde{R}_1^{lj}+\widetilde{R}_2^{lj}|\lesssim c^4\|\nabla\phi\|_{L^\infty}\|f\|_{L^\infty}$. Choosing $\phi$ so that $$\|D^j\phi\|_{L^\infty}\lesssim c^{-(2+j)},$$ 
we conclude the proof by taking $\mathcal{R}f=R$.
\end{proof}

\subsection*{Acknowledgments}
The authors are grateful to the Applied Analysis Research Group at the Max Planck Institute for Mathematics in the Sciences in Leipzig, where part of this work was done, for their warm hospitality and stimulating discussions.
The authors were partially supported by the MICINN through the grants EUR2020-112271 and PID2020-114703GB-I00.
FG and AHT were partially supported by the Fundaci\'on de Investigaci\'on de la Universidad de Sevilla through the
grant FIUS23/0207, by the MICINN (Spain) through the grant PID2022-140494N, and by the Junta de Andaluc\'ia through the grant P20-00566. FG was partially supported by MINECO grant RED2018-102650-T (Spain). FG acknowledges support
from IMAG, funded by MICINN through the Maria de Maeztu Excellence Grant CEX2020-001105-M/AEI/10.13039/501100011033. AHT was partially supported by the grant FPU19/02748,
funded by the Spanish Ministry of Universities.
FM was partially supported by the 
Max Planck Institute for Mathematics in the Sciences, the Spanish Ministry of Science and Innovation through the Severo Ochoa Programme for Centres of Excellence in R\&D
(CEX2019-000904-S) and the ERC Advanced Grant 834728.

\bibliographystyle{abbrv}
\bibliography{Vortexfilament_convexintegration}

\begin{flushleft}
	Francisco Gancedo\\
	\textsc{Departamento de An\'alisis Matem\'atico \& IMUS, Universidad de Sevilla\\
	41012 Sevilla, Spain}\\
	\textit{E-mail address:
    fgancedo@us.es}
\end{flushleft}

\begin{flushleft}
	Antonio Hidalgo-Torn\'e\\
	\textsc{Departamento de An\'alisis Matem\'atico \& IMUS, Universidad de Sevilla\\
	41012 Sevilla, Spain}\\
	\textit{E-mail address:
    ahtorne@us.es}
\end{flushleft}

\begin{flushleft}
	Francisco Mengual\\
	\textsc{Max Planck Institute for Mathematics in the Sciences\\
		04103 Leipzig, Germany}\\
	\textit{E-mail address: fmengual@mis.mpg.de}
\end{flushleft}

\end{document}